\font\smallit=cmti10
\font\smalltt=cmtt10
\renewcommand\section{\@startsection {section}{1}{\z@}
{-30pt \@plus -1ex \@minus -.2ex}
{2.3ex \@plus.2ex}
{\normalfont\normalsize\bfseries\boldmath}}
\renewcommand\subsection{\@startsection{subsection}{2}{\z@}
{-3.25ex\@plus -1ex \@minus -.2ex}
{1.5ex \@plus .2ex}
{\normalfont\normalsize\bfseries\boldmath}}
\renewcommand{\@seccntformat}[1]{\csname the#1\endcsname. }
\newtheorem{theorem}{Theorem}
\newtheorem{lemma}{Lemma}
\newtheorem{corollary}{Corollary}
\theoremstyle{definition}
\newtheorem{definition}{Definition}
\newtheorem{remark}{Remark}
\newtheorem{example}{Example}
\begin{document}

\begin{center}
\uppercase{\bf A Variant of Wythoff's Game Defined by Hofstadter's G-Sequence}
\vskip 20pt
{\bf  Kahori Komaki  }\\
{\smallit Keimei Gakuin Junior and High School, Kobe City, Japan}\\
{\tt xiaomukahori@gmail.com}
\vskip 10pt
{\bf Ryohei Miyadera}\\
{\smallit Keimei Gakuin Junior and High School, Kobe City, Japan}\\
{\tt runnerskg@gmail.com}
\vskip 10pt
{\bf Aoi Murakami }\\
{\smallit Kwansei Gakuin University }\\
{\tt atatpj728786.55986@gmail.com}
\end{center}
\vskip 20pt
\centerline{\smallit Received: , Revised: , Accepted: , Published: } % We will fill in the dates
\vskip 30pt

%editageでのチェック前に図で説明しておく。

\centerline{\bf Abstract}
\noindent
In this paper, we study a variant of the classical Wythoff's game. The classical form is played with two piles of stones, from which two players take turns to remove stones from one or both piles. When removing stones from both piles, an equal number must be removed from each. The player who removes the last stone or stones is the winner.
Equivalently, we consider a single chess queen placed somewhere on a large grid of squares. Each player can move the queen toward the upper-left corner of the grid, either vertically, horizontally, or diagonally in any number of steps. The winner is the player who moves the queen to the upper-left corner, position $(0,0)$ in our coordinate system. We call $(0,0)$ the terminal position of Wythoff's game. The set of $\mathcal{P}$-positions (previous player's winning positions) is $\{(\lfloor n \phi  \rfloor, \lfloor n \phi  \rfloor + n):n \in \mathbb{Z}_{\geq0} \}$ $\cup \{( \lfloor n \phi  \rfloor +n ,\lfloor n \phi  \rfloor):n \in \mathbb{Z}_{\geq0} \}$, where $\phi= \frac{1+\sqrt{5}}{2}$. In our variant of Wythoff's game, we have a set of positions $\{(x,y):x+y \leq 2\}$ $=\{(0,0),(1,0),(0,1),(1,1),(2,0),(0,2)\}$ as the terminal set. The player who moves in this terminal set is the winner.
For this variant, we define a function $g$ by 
$g(n)=1-g(h(n-1))$ if $h(n-2) < h(n-1)$ and $g(n)=1$ if not, where $h$ is Hofstadter's G-sequence.
Then, the set of $\mathcal{P}$-positions is $\{(\lfloor n \phi \rfloor+g( n )-1, \lfloor n (\phi + 1) \rfloor  +g(n)):n \in \mathbb{Z}_{\geq0} \}$ $\cup \{( \lfloor n (\phi + 1) \rfloor  +g(n),\lfloor n \phi \rfloor+g(n)-1):n \in \mathbb{Z}_{\geq0} \}$.
This variant has two remarkable properties.
For a position $(x,y)$ with $x \geq 8$ or $y \geq 8$, the Grundy number of the position $(x,y)$ is $1$ if and only if $(x,y)$ is a $\mathcal{P}$-position in Wythoff's game.
Another remarkable property is that for a position $(x,y)$ with $x \geq 8$ or $y \geq 8$,  $(x,y)$ is a $\mathcal{P}$-position of the misère version of this variant if and only if   $(x,y)$ is a $\mathcal{P}$-position of Wythoff's game.
%paperpal

 \pagestyle{myheadings} 
 \markright{\smalltt \hfill} 
 \thispagestyle{empty} 
 \baselineskip=12.875pt 
 \vskip 30pt

\section{Introduction to Wythoff's Game and its Variant} 
First, we define Wythoff's game. For the details of Wythoff's game, see \cite{wythoffpaper}.
Let $\mathbb{Z}_{\geq0}$ and $\mathbb{N}$ be the sets of non-negative numbers and natural numbers, respectively. 
\begin{definition}\label{wythoff}
Wythoff's game is played with two piles of stones. Two players take turns removing stones from one or both piles. When removing stones from both piles, the number of stones removed from each pile should be equal. The player who removes the last stone or stones wins.
An equivalent description of the game is that a single chess queen is placed somewhere on a large grid of squares, and each player can move the queen towards the upper-left corner of the grid, either vertically, horizontally, or diagonally, for any number of steps.  The winner is the player who moves the queen to the upper-left corner.
\end{definition}

Many people have proposed many variants of Wythoff's game, and the author of the present article also presented a variant in \cite{suetsugu2020} and \cite{integer2025}.
We define a new variant of Wythoff's game.
\begin{definition}\label{wythoffvar}
This variant of Wythoff's game is played like the classical Wythoff’s game, with the terminal set 
 $\{(x,y):x+y \leq 2\}$ $=\{(0,0),(1,0),(0,1),(1,1),(2,0),$ \\
 $(0,2)\}$. 
\end{definition}

Figure~\ref{moveofqueen}  shows the moves that the queen can make, and 
Figures ~\ref{chessboard} and \ref{chessboard2} show the terminal positions of the classical Wythoff's game and the game in Definition \ref{wythoffvar}, respectively.

\begin{figure}[H]
\begin{tabular}{ccc}
\begin{minipage}[t]{0.33\textwidth}
\begin{center}
\includegraphics[height=2.cm]{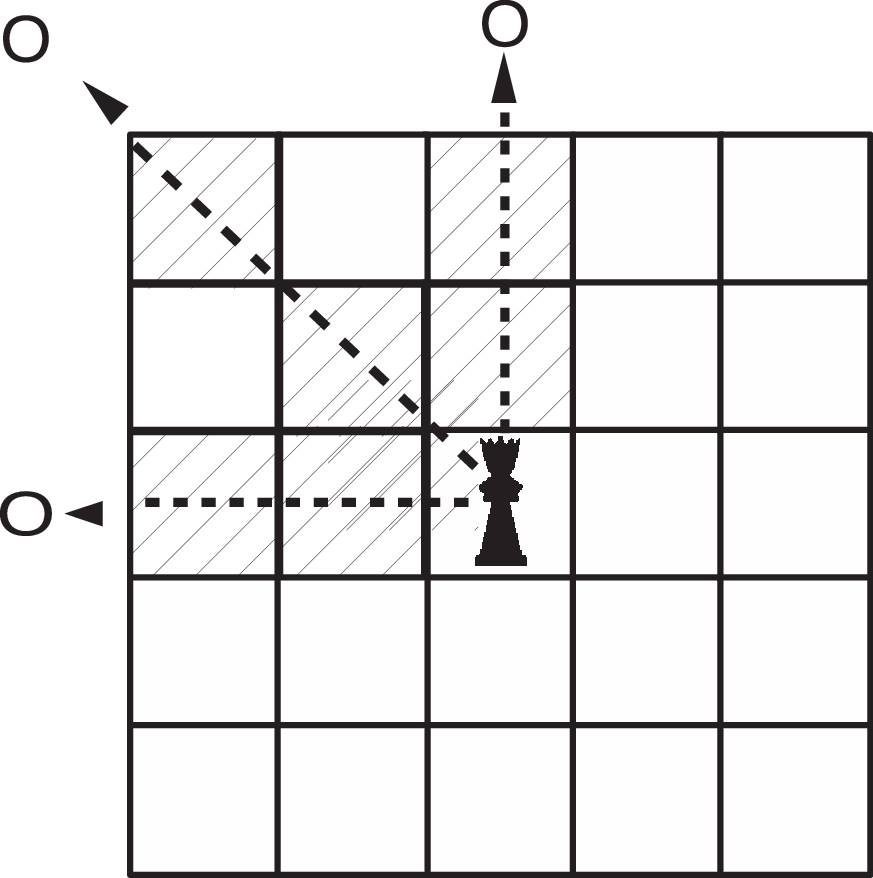}
\captionsetup{labelsep = period}
\caption{The moves of \\ the queen}
\label{moveofqueen}
\end{center}
\end{minipage}
\begin{minipage}[t]{0.33\textwidth}
\begin{center}
	\includegraphics[height=2.5cm]{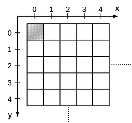}
\captionsetup{labelsep = period}
\caption{The terminal \\ position of Wythoff's \\game}
\label{chessboard}
\end{center}
\end{minipage}
\begin{minipage}[t]{0.33\textwidth}
\begin{center}
\includegraphics[height=2.5cm]{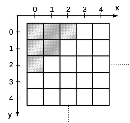}
\captionsetup{labelsep = period}
\caption{The terminal \\ positions of the game \\ in Definition \ref{wythoffvar}}
\label{chessboard2}
\end{center}
\end{minipage}
\end{tabular}
\end{figure}

We define $\textit{move}(x,y)$ in Wythoff's game and its variant in Definition \ref{movewythoff}.
\begin{definition}\label{movewythoff}
$\textit{move}(x,y)$ is the set of all positions that can be reached from $(x,y)$.
 For any $x,y \in Z_{\ge 0}$, let
	\begin{flalign}
	 & \textit{move}(x,y)	=M_1(x,y) \cup M_2(x,y) \cup M_3(x,y),\text{where } & & \nonumber \\
		& M_1(x,y)= \{(u,y):u<x \text{ and } u \in Z_{\ge 0}\}, M_2(x,y)=\{(x,v):v<y \text{ and } v \in Z_{\ge 0}\},& \nonumber \\
		& \text{ and } & \nonumber \\
		& M_3(x,y)=\{(x-t,y-t): 1 \leq t \leq \min(x,y) \text{ and } t \in Z_{\ge 0}\}.& \nonumber
	\end{flalign}
\end{definition}

\begin{remark}\label{explainm1m2m3}
$M_1(x,y), M_2(x,y), M_3(x,y)$ are the sets of horizontal, vertical, and diagonal moves, respectively.
$M_3(x,y)$ is an empty set if $x = 0$ or $y = 0$.
For Wythoff's game $\textit{move}(0,0)= \emptyset$, and for the game in Definition \ref{wythoffvar}, 
$\textit{move}(x,y)= \emptyset$ for $x, y \in \mathbb{Z}_{\geq0}$ such that $x+y \leq 2$.
\end{remark}

\section{Combinatorial Game Theory Definitions and a Theorem}
For completeness, we briefly review some of the necessary concepts in combinatorial game theory by referring to $\cite{lesson}$ and \cite{combysiegel}.

Wythoff's game is an impartial game without drawings; only two outcome classes are possible.
\begin{definition}\label{NPpositions}
A position is referred to as a P-position if it is the winning position for the previous player (the player who has just moved), as long as the player plays correctly at each stage. A position is referred to as an N-position if it is the winning position for the next player, as long as they play correctly at each stage.
\end{definition}

\begin{definition}\label{sumofgames}
	The \textit{disjunctive sum} of the two games, denoted by $\mathbf{G}+\mathbf{H}$, is a super game in which a player may move either in $\mathbf{G}$ or $\mathbf{H}$ but not in both.
\end{definition}

\begin{definition}\label{defofmex}
The \textit{minimum excluded value} ($\textit{mex}$) of a set $S$ of nonnegative integers is the least nonnegative integer that is not in S.
\end{definition}

\begin{definition}\label{defofmexgrundy}
Let $\mathbf{p}$ be a position in the impartial game. The associated \textit{Grundy number} is denoted by $G(\mathbf{p})$ and is 
 recursively defined by 
	$G(\mathbf{p}) = \textit{mex}(\{G(\mathbf{h}): \mathbf{h} \in \textit{move}(\mathbf{p})\}).$
\end{definition}

The next result demonstrates the usefulness of the Sprague--Grundy theory for impartial games.
\begin{theorem}[\cite{lesson}]\label{theoremofsumg}
Let $\mathbf{G}$ and $\mathbf{H}$ be impartial rulesets, and $G_{\mathbf{G}}$ and $G_{\mathbf{H}}$ be the Grundy numbers of game $\mathbf{g}$ played under the rules of $\mathbf{G}$ and game $\mathbf{h}$ played under those of $\mathbf{H}$. Then, we obtain the following:\\
	$(i)$ for any position $\mathbf{g}$ in $\mathbf{G}$, we have that 
	$G_{\mathbf{G}}(\mathbf{g})=0$ if and only if $\mathbf{g}$ is the $\mathcal{P}$-position; \\
	$(ii)$ the Grundy number of positions $\{\mathbf{g},\mathbf{h}\}$ in game $\mathbf{G}+\mathbf{H}$ is
	$G_{\mathbf{G}}(\mathbf{g})\oplus G_{\mathbf{H}}(\mathbf{h})$.
\end{theorem}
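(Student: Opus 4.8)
The plan is to prove the two parts separately, each by induction along the well-founded ordering induced by the move relation; this is legitimate because in every ruleset under consideration every play terminates after finitely many moves (for Wythoff's game and the variant of Definition~\ref{wythoffvar} each move strictly decreases $x+y$, and in the general statement one takes finiteness of play as a standing hypothesis on $\mathbf{G}$ and $\mathbf{H}$). A position with no legal move is a $\mathcal{P}$-position by convention, and its Grundy number is $\textit{mex}(\emptyset)=0$.

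For part $(i)$ I would induct on $\mathbf{g}$. The base case is a terminal position, which is a $\mathcal{P}$-position with Grundy number $0$. For the inductive step put $S=\{G_{\mathbf{G}}(\mathbf{h}):\mathbf{h}\in\textit{move}(\mathbf{g})\}$. If $G_{\mathbf{G}}(\mathbf{g})=\textit{mex}(S)=0$, then $0\notin S$, so every option $\mathbf{h}$ has $G_{\mathbf{G}}(\mathbf{h})\neq 0$ and hence, by the inductive hypothesis, is an $\mathcal{N}$-position; thus every move from $\mathbf{g}$ leaves the opponent in a winning position, so $\mathbf{g}$ is a $\mathcal{P}$-position. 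If instead $G_{\mathbf{G}}(\mathbf{g})\neq 0$, then $0\in S$, so some option has Grundy number $0$ and, by the inductive hypothesis, is a $\mathcal{P}$-position; moving there wins, so $\mathbf{g}$ is an $\mathcal{N}$-position.

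For part $(ii)$ I would induct on the pair $(\mathbf{g},\mathbf{h})$. Write $a=G_{\mathbf{G}}(\mathbf{g})$, $b=G_{\mathbf{H}}(\mathbf{h})$ and $c=a\oplus b$; the goal is $c=\textit{mex}\{G_{\mathbf{G}+\mathbf{H}}(\mathbf{p}):\mathbf{p}\in\textit{move}(\mathbf{g},\mathbf{h})\}$. First, no option reaches Grundy number $c$: a move in $\mathbf{G}$ goes to $(\mathbf{g}',\mathbf{h})$ with $a'=G_{\mathbf{G}}(\mathbf{g}')\neq a$, whose value is $a'\oplus b\neq a\oplus b=c$ by the inductive hypothesis, and moves in $\mathbf{H}$ are symmetric. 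Second, every $d$ with $0\le d<c$ is reached: let $k$ be the leading bit position on which $c$ and $d$ differ, so bit $k$ of $c$ is $1$ and of $d$ is $0$; since $c=a\oplus b$, bit $k$ of $a$ or of $b$ equals $1$, say of $a$. Set $a'=a\oplus c\oplus d$; then bit $k$ of $a'$ is $0$ while the higher bits of $a'$ agree with those of $a$, so $a'<a$, and by the mex-definition of $a=G_{\mathbf{G}}(\mathbf{g})$ there is an option $\mathbf{g}'$ with $G_{\mathbf{G}}(\mathbf{g}')=a'$; the inductive hypothesis then gives $G_{\mathbf{G}+\mathbf{H}}(\mathbf{g}',\mathbf{h})=a'\oplus b=d$. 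These two facts yield $\textit{mex}=c$. Finally, applying $(i)$ to $\mathbf{G}+\mathbf{H}$ together with $(ii)$ recovers the familiar corollary that $\{\mathbf{g},\mathbf{h}\}$ is a $\mathcal{P}$-position iff $G_{\mathbf{G}}(\mathbf{g})=G_{\mathbf{H}}(\mathbf{h})$.

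The base cases and the inductive bookkeeping are routine; the one genuinely delicate point I expect is the binary-arithmetic claim that $a'=a\oplus c\oplus d$ satisfies $a'<a$ — i.e. that XOR-ing $a$ with $c\oplus d$ clears exactly its bit $k$ and leaves the higher bits untouched. Getting that right, and, at the outset, fixing a well-founded ordering so that both inductions are valid, is the main thing to be careful about; everything else follows formally from the definitions of \textit{mex} (Definition~\ref{defofmex}), the Grundy number (Definition~\ref{defofmexgrundy}), and the disjunctive sum (Definition~\ref{sumofgames}).
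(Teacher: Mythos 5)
The paper offers no proof of this theorem: it is quoted from \cite{lesson} as a known result, so there is nothing internal to compare against. Your argument is the standard textbook proof of the Sprague--Grundy theorem and is correct as written --- both the mex/induction argument for $(i)$ and, for $(ii)$, the two halves (no option attains $c=a\oplus b$ because an option's Grundy value in one component differs from that component's mex; every $d<c$ is attained via $a'=a\oplus c\oplus d<a$, which is justified correctly by the leading-differing-bit observation).
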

Using Theorem \ref{theoremofsumg}, we can determine the $\mathcal{P}$-position by calculating the Grundy numbers and the $\mathcal{P}$-position of the sum of the two games by calculating the Grundy numbers of the two games.

%Grammarly check

\section{Wythoff's Game and the Lower and Upper Wythoff Sequences}
In this section, we review a theorem of Wyhoff's game, and present some lemmas
for later use. In the remainder of this paper, we let $\phi= \frac{1+\sqrt{5}}{2}$.

\begin{definition}\label{wythoffpp}
Let 
\begin{equation}
P_{0,1}=\{(\lfloor n \phi \rfloor, \lfloor n (\phi + 1) \rfloor ):n \in \mathbb{Z}_{\geq0} \}, \nonumber    
\end{equation}
\begin{equation}
P_{0,2}= \{( \lfloor n (\phi + 1) \rfloor  ,\lfloor n \phi \rfloor):n \in \mathbb{Z}_{\geq0} \}, \nonumber    
\end{equation}
and
\begin{equation}
P_0=P_{0,1} \cup P_{0,2}. \nonumber    
\end{equation}
\end{definition}

\begin{theorem}[\cite{wythoffpaper}]\label{theoremforwythoff}
 The set $P_0$ in Definition \ref{newfunction} is the set of $\mathcal{P}$-positions of Wythoff's game.
\end{theorem}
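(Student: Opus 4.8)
The plan is to invoke the standard characterization of $\mathcal{P}$-positions: since every legal move strictly decreases $x+y$, the game is well-founded, so a set $P$ of positions that contains the terminal position $(0,0)$ coincides with the set of $\mathcal{P}$-positions as soon as (a) no legal move joins two elements of $P$, and (b) from every position outside $P$ there is a legal move into $P$; an easy induction on $x+y$ then does the rest. I would therefore reduce the theorem to verifying (a) and (b) for $P_0$. The arithmetic backbone consists of three elementary facts about $a_n:=\lfloor n\phi\rfloor$ and $b_n:=\lfloor n(\phi+1)\rfloor$: that $b_n=a_n+n$ (immediate from $\phi^2=\phi+1$, whence $\lfloor n\phi^2\rfloor=\lfloor n\phi+n\rfloor=\lfloor n\phi\rfloor+n$); that $(a_n)$ and $(b_n)$ are strictly increasing; and Beatty's theorem applied to the irrationals $\phi,\phi^2$ with $1/\phi+1/\phi^2=1$, which says $\{a_n:n\ge 1\}$ and $\{b_n:n\ge 1\}$ partition $\mathbb{N}$. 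I would record these first, noting also that $(0,0)=(a_0,b_0)\in P_0$.

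For step (a) I would use the $x\leftrightarrow y$ symmetry of both the ruleset and of $P_0$ to assume the move starts at some $(a_n,b_n)\in P_{0,1}$, and split on the move type. A horizontal or vertical move keeps one coordinate fixed; landing again in $P_{0,1}$ forces the index to be unchanged by injectivity of $(a_n)$ or $(b_n)$, so the "move" has length $0$; landing in $P_{0,2}$ would require $a_m=b_n$ or $b_m=a_n$, which is forbidden by the Beatty partition (the degenerate index-$0$ cases only reproduce $(0,0)$, which has no moves). A diagonal move keeps $y-x$ fixed; using $b_k-a_k=k$, landing in $P_{0,1}$ forces equal indices, and landing in $P_{0,2}$ forces $-n=m$, hence $n=m=0$; either way the move has length $0$, a contradiction.

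For step (b) I would take $(x,y)\notin P_0$ with $x\le y$ (the reverse case being symmetric), set $d=y-x$, and compare $x$ with $a_d$, observing that the unique element of $P_{0,1}$ with difference $d$ is $(a_d,b_d)$. If $x>a_d$, then $y=x+d>b_d$ and the diagonal move removing $x-a_d\ge 1$ stones from each pile reaches $(a_d,b_d)$. If $x=a_d$, then $(x,y)=(a_d,b_d)\in P_0$, contradicting the hypothesis. If $x<a_d$, the Beatty partition writes $x=a_m$ or $x=b_m$ for some index $m$ (with $x=0=a_0$ permitted): in the first case $a_m=x<a_d$ forces $m<d$, so $y=x+d>a_m+m=b_m$ and a vertical move reaches $(a_m,b_m)\in P_{0,1}$; in the second case $y=x+d\ge b_m>a_m$ and a vertical move reaches $(b_m,a_m)\in P_{0,2}$. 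In every case we land in $P_0$, establishing (b).

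The real content is the Beatty-sequence input; once the identity $b_n=a_n+n$ and the partition property are in hand, (a) and (b) are the routine case analyses sketched above, and the only step that needs a little care is the sub-case $x<a_d$ of (b), where one must locate the smaller coordinate inside the Beatty partition to choose the correct vertical move and check that the target is a genuine $\mathcal{P}$-position reached by a move of positive length. Since this is Wythoff's classical theorem, one may alternatively simply cite \cite{wythoffpaper}.
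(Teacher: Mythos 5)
The paper does not actually prove this theorem: it is stated with the citation to Wythoff's 1907 paper and no proof environment follows, and the supporting material the paper does develop (Lemmas \ref{rayleigh}--\ref{twocases}) is aimed at the variant game rather than at this classical result. Your proposal therefore goes beyond the paper by supplying a complete, self-contained proof, and it is correct. The skeleton is the standard one: because every move strictly decreases $x+y$, a set satisfying (a) no move stays inside it and (b) every outside position has a move into it must be the set of $\mathcal{P}$-positions; and your verification of (a) and (b) for $P_0$ is sound. The identity $\lfloor n(\phi+1)\rfloor=\lfloor n\phi\rfloor+n$, the strict monotonicity of both sequences, and the Beatty partition for $\phi$ and $\phi^2$ (with the index-$0$ degeneracy $a_0=b_0=0$ handled separately) are exactly the facts needed; in (a) the difference invariant $b_n-a_n=n$ kills diagonal moves and the partition kills cross moves between $P_{0,1}$ and $P_{0,2}$, and in (b) your three-way split on $x$ versus $a_d$ covers all cases, with the only delicate subcase ($x<a_d$, locating $x$ in the Beatty partition and checking the vertical move has positive length) treated correctly. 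What your route buys is a paper that is self-contained on this point; what the paper's route (a bare citation) buys is brevity, which is defensible for such a classical theorem. Either is acceptable; if the proof is included, it would be natural to phrase it using the paper's own Lemma \ref{rayleigh} for the partition property.
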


\begin{definition}\label{definitionofa1a2}
Let $A_1=\{a_1(n)=\lfloor n \phi \rfloor :n \in \mathbb{Z}_{\geq0}\}$ and $A_2=\{a_2(n) =\lfloor n (\phi + 1) \rfloor  : n \in \mathbb{Z}_{\geq0}\}$.
We call $A_1$ and $A_2$ the lower and upper Wythoff sequences.
\end{definition}

\begin{lemma}\label{rayleigh}
The lower and upper Wythoff sequences $A_1$ and $A_2$ satisfy the following:\\
$(i)$  Sets $A_1$ and $A_2$ satisfy  $A_1 \cup A_2 = \mathbb{Z}_{\geq0}$;\\
$(ii)$ Sets $A_1$ and $A_2$ satisfy $A_1 \cap A_2 = \{0\}$.
\end{lemma}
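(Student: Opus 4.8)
Lemma~\ref{rayleigh} is the Beatty--Rayleigh theorem specialized to the slopes $\alpha=\phi$ and $\beta=\phi+1$, so the plan is to give the standard counting proof of that theorem. First I would record the three arithmetic facts that drive the argument: both $\alpha$ and $\beta$ are irrational, both exceed $1$, and $\frac{1}{\alpha}+\frac{1}{\beta}=1$. The last of these is the golden-ratio identity in disguise: $\frac{1}{\phi}+\frac{1}{\phi+1}=\frac{(\phi+1)+\phi}{\phi(\phi+1)}=\frac{2\phi+1}{\phi^{2}+\phi}=1$, since $\phi^{2}=\phi+1$ gives $\phi^{2}+\phi=2\phi+1$. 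I would also flag at the outset a bookkeeping point: $a_1(0)=a_2(0)=0$, so statement $(ii)$ must be read as disjointness of the terms with $n\ge 1$ (equivalently, $A_1\setminus\{0\}$ and $A_2\setminus\{0\}$ are disjoint), while $0$ is precisely the common element that statement $(i)$ needs. Concretely, it suffices to prove that $\{a_1(n):n\ge 1\}$ and $\{a_2(n):n\ge 1\}$ partition $\mathbb{N}$; adjoining $0$ then yields $(i)$ verbatim and $(ii)$ in the stated sense.

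\noindent For the main step, fix $m\in\mathbb{N}$ and count the indices $n\ge 1$ with $\lfloor n\alpha\rfloor\le m$. Since $\alpha$ is irrational, $n\alpha\ne m+1$, so $\lfloor n\alpha\rfloor\le m$ iff $n\alpha<m+1$ iff $n<\frac{m+1}{\alpha}$; and because $\frac{m+1}{\alpha}$ is irrational, the number of such $n$ is exactly $\big\lfloor\frac{m+1}{\alpha}\big\rfloor$. The identical computation gives $\big\lfloor\frac{m+1}{\beta}\big\rfloor$ for $\beta$. Now $\frac{m+1}{\alpha}+\frac{m+1}{\beta}=(m+1)\big(\frac{1}{\alpha}+\frac{1}{\beta}\big)=m+1$, an integer, while neither summand is an integer (each is irrational), so their fractional parts sum to $1$ and hence $\big\lfloor\frac{m+1}{\alpha}\big\rfloor+\big\lfloor\frac{m+1}{\beta}\big\rfloor=m$. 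Therefore, for every $m\in\mathbb{N}$, the multiset obtained by listing $a_1(n)$ once for each $n\ge 1$ and $a_2(n)$ once for each $n\ge 1$ has exactly $m$ entries that are $\le m$ (and, by the same formula with $m=0$, none equal to $0$). Subtracting consecutive counts, each $m\in\mathbb{N}$ occurs in this multiset exactly once; consequently neither sequence repeats a value, the two value sets are disjoint, and their union is all of $\mathbb{N}$. Strict monotonicity of $n\mapsto\lfloor n\alpha\rfloor$, which holds because $\alpha>1$, is a convenient secondary check that a single sequence has no internal repetitions.

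\noindent There is no deep obstacle here, since the statement is classical, but the write-up is sensitive to two points that deserve care. The first is the repeated appeal to irrationality: turning each inequality $\lfloor n\alpha\rfloor\le m$ into the strict inequality $n\alpha<m+1$, and guaranteeing that $\frac{m+1}{\alpha}$ and $\frac{m+1}{\beta}$ are non-integers so that their fractional parts really do add to $1$. This is exactly where the construction breaks for rational slopes, so it should be called out explicitly rather than glossed. The second is the $n=0$ accounting already mentioned, since the two Beatty sequences genuinely share the value $0$ and $(ii)$ is literally false unless that term is excluded. An alternative route, closer to the combinatorial-game material in the rest of the paper, would be to use the identity $a_2(n)=\lfloor n(\phi+1)\rfloor=\lfloor n\phi\rfloor+n=a_1(n)+n$ together with a \textit{mex}-style induction matching $a_1,a_2$ to the recursively generated $\mathcal{P}$-positions; but the direct counting proof above is shorter and does not presuppose Theorem~\ref{theoremforwythoff}.
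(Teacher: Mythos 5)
Your proposal is correct, and it does more than the paper does: the paper's entire proof is the one-line citation ``this follows directly from the well-known Rayleigh's theorem,'' whereas you supply the standard counting proof of the Beatty--Rayleigh theorem from scratch (verifying $\tfrac{1}{\phi}+\tfrac{1}{\phi+1}=1$, counting the terms $\le m$ in each sequence, and using irrationality to get the fractional parts to sum to $1$). Both routes rest on the same classical fact, so there is no real methodological divergence; yours is simply self-contained where the paper's is an appeal to authority. One point in your write-up is genuinely valuable and absent from the paper: since the definition of $A_1$ and $A_2$ takes $n\in\mathbb{Z}_{\geq 0}$, both sets contain $a_1(0)=a_2(0)=0$, so statement $(ii)$ as literally written is false ($A_1\cap A_2=\{0\}$, not $\emptyset$), and your restriction to $n\ge 1$ is the correct repair. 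It is telling that the paper itself later states $B_1\cap B_2=\{0\}$ for the analogous shifted sequences, so the $n=0$ term is an issue the authors handle elsewhere but overlook here. Your closing remark that a \textit{mex}-style induction would also work is fine but, as you say, unnecessary; the counting argument is the right tool.
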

\begin{proof}
This follows directly from the well-known Rayleigh theorem.
\end{proof}

\begin{lemma}\label{lemmaphi12}
For the lower Wythoff sequence and the upper sequence, we have the following:\\
$(i)$ for $n \in \mathbb{Z}_{\geq0}$, $a_1(n+1)-a_1(n)$ $= \lfloor (n+1)\phi  \rfloor - \lfloor n \phi \rfloor$  = $1$ or $2$;\\
$(ii)$ for $n \in \mathbb{Z}_{\geq0}$, $a_1(n+2)-a_1(n)$ $= \lfloor (n+2) \phi  \rfloor - \lfloor n \phi \rfloor$ = $3$ or $4$;\\
$(iii)$ for $m \in \mathbb{Z}_{\geq0}$, $a_2(m+1)-a_2(m)$ $= \lfloor (m+1)(\phi +1)  \rfloor - \lfloor m (\phi +1) \rfloor$  = $2$ or $3$. 
\end{lemma}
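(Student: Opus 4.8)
The statement to prove is Lemma~\ref{lemmaphi12}, which asserts that the gaps in the lower and upper Wythoff sequences take only a small number of prescribed values. The plan is to reduce everything to elementary estimates on $\lfloor k\phi\rfloor$ using the identity $\phi^2=\phi+1$, the irrationality of $\phi$, and the fact that $1<\phi<2$ and $2<\phi+1<3$.

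For part $(i)$, I would write $a_1(n+1)-a_1(n)=\lfloor(n+1)\phi\rfloor-\lfloor n\phi\rfloor$ and use the general bound that for any real $\alpha$ and integer $m$, $\lfloor m\alpha\rfloor+\lfloor\alpha\rfloor\le\lfloor(m+1)\alpha\rfloor\le\lfloor m\alpha\rfloor+\lfloor\alpha\rfloor+1$; since $\lfloor\phi\rfloor=1$, this gives the difference is $1$ or $2$. To be careful I would instead argue directly: $\lfloor(n+1)\phi\rfloor-\lfloor n\phi\rfloor$ is within $1$ of $\phi$ (because $\lfloor x\rfloor-\lfloor y\rfloor$ differs from $x-y$ by less than $1$ in absolute value), hence lies strictly between $\phi-1\approx0.618$ and $\phi+1\approx2.618$, so being an integer it equals $1$ or $2$. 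Part $(ii)$ is the same computation with $2\phi$ in place of $\phi$: the difference $\lfloor(n+2)\phi\rfloor-\lfloor n\phi\rfloor$ differs from $2\phi\approx3.236$ by less than $1$, so it lies strictly between $2\phi-1\approx2.236$ and $2\phi+1\approx4.236$; since $2\phi$ is irrational the difference can never equal $2\phi$ exactly, and an integer in $(2.236,4.236)$ is $3$ or $4$. Part $(iii)$ is identical with $\phi+1\approx2.618$: the difference $\lfloor(m+1)(\phi+1)\rfloor-\lfloor m(\phi+1)\rfloor$ differs from $\phi+1$ by less than $1$, hence lies in $(\phi,\phi+2)=(1.618,3.618)$, so it is $2$ or $3$.

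There is no real obstacle here; the only point requiring a word of care is ruling out the endpoint values, which is where irrationality of $\phi$ enters. For instance in $(i)$ one must exclude the value $0$: this cannot occur because $\lfloor(n+1)\phi\rfloor>\lfloor n\phi\rfloor$ strictly, as $(n+1)\phi-n\phi=\phi>1$ forces the floors to be distinct (indeed $\lfloor(n+1)\phi\rfloor\ge\lfloor n\phi+1\rfloor=\lfloor n\phi\rfloor+1$). Similarly in $(ii)$ the value $2$ is excluded because $2\phi>3$ implies two full units are always gained; more precisely $\lfloor(n+2)\phi\rfloor\ge\lfloor n\phi+2\phi\rfloor\ge\lfloor n\phi\rfloor+3$ since $2\phi>3$. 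I would organize the write-up by first recording the trivial two-sided bound $\alpha-1<\lfloor(m+k)\alpha\rfloor-\lfloor m\alpha\rfloor<k\alpha+1$ (strict on the right using irrationality of $k\alpha$, strict on the left always), apply it with $(\alpha,k)=(\phi,1),(\phi,2),(\phi+1,1)$, and then in each case intersect the resulting open interval with $\mathbb{Z}$, separately noting the lower-end exclusion via the fact that $\phi>1$ and $2\phi>3$. This is a short, self-contained argument that invokes nothing beyond the properties of the floor function and the numerical value of $\phi$.
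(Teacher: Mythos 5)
Your proposal is correct and uses essentially the same idea as the paper: the elementary fact that $\lfloor x+y\rfloor-\lfloor y\rfloor$ equals $\lfloor x\rfloor$ or $\lfloor x\rfloor+1$, applied with $x=\phi$, $x=2\phi$, and $x=\phi+1$ (the paper gets $(iii)$ from $(i)$ via $\lfloor m(\phi+1)\rfloor=\lfloor m\phi\rfloor+m$, but this is a trivial variation). Note that irrationality of $\phi$ is not actually needed anywhere, and your displayed general bound has a small typo ($\alpha-1$ should be $k\alpha-1$ on the left), though your case-by-case applications are all correct.
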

\begin{proof}
For any positive real numbers $x,y$, 
$\lfloor x+y \rfloor - \lfloor y \rfloor$ = $\lfloor x \rfloor$ or $\lfloor x \rfloor + 1$. Hence, 
$\lfloor (n+1) \phi  \rfloor - \lfloor n \phi \rfloor$ = $\lfloor  \phi \rfloor =1$ or $\lfloor (n+1) \phi  \rfloor - \lfloor n \phi \rfloor$ =$\lfloor  \phi \rfloor + 1 =2$, and $\lfloor (n+2) \phi  \rfloor - \lfloor n \phi \rfloor$ = $\lfloor 2 \phi \rfloor =3$ or $\lfloor (n+2) \phi  \rfloor - \lfloor n \phi \rfloor$ = $\lfloor 2 \phi \rfloor + 1 =4$.
Similarly, 
$a_2(m+1)-a_2(m)$ $= \lfloor (m+1)(\phi +1)  \rfloor - \lfloor m (\phi +1) \rfloor$   $= \lfloor (m+1)\phi  \rfloor - \lfloor m \phi \rfloor$ + 1= 2 or 3.
\end{proof}

\begin{lemma}\label{nn1eqx}
For any  $x \in \mathbb{Z}_{\geq0}$, there exists $n \in \mathbb{Z}_{\geq0}$ such that 
$x = \lfloor n \phi \rfloor$ or $x = \lfloor n \phi \rfloor -1$.
\end{lemma}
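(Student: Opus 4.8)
The plan is to exploit the structure of the lower Wythoff sequence $a_1(n)=\lfloor n\phi\rfloor$, in particular the fact (Lemma~\ref{lemmaphi12}(i)) that consecutive differences $a_1(n+1)-a_1(n)$ are always $1$ or $2$. I would argue by strong induction on $x$. The base cases $x=0$ and $x=1$ are handled directly: $0=\lfloor 0\cdot\phi\rfloor$ and $1=\lfloor 1\cdot\phi\rfloor$. For the inductive step, suppose the claim holds for all values below $x$; apply it to $x-1$ to obtain $n\in\mathbb{Z}_{\ge 0}$ with $x-1=\lfloor n\phi\rfloor$ or $x-1=\lfloor n\phi\rfloor-1$.

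The heart of the argument is a short case analysis using the gap size $d:=a_1(n+1)-a_1(n)\in\{1,2\}$. If $x-1=\lfloor n\phi\rfloor$, then either $d=1$, giving $x=\lfloor(n+1)\phi\rfloor$, or $d=2$, giving $x=\lfloor(n+1)\phi\rfloor-1$; either way $x$ has the required form with $n+1$ in place of $n$. If instead $x-1=\lfloor n\phi\rfloor-1$, i.e.\ $x=\lfloor n\phi\rfloor$, then $x$ already has the required form with the same $n$. Thus in every case $x$ is of the form $\lfloor n'\phi\rfloor$ or $\lfloor n'\phi\rfloor-1$ for a suitable $n'$, completing the induction.

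An alternative, more direct route avoids induction entirely: by Lemma~\ref{rayleigh}, every $x\in\mathbb{Z}_{\ge 0}$ lies in exactly one of $A_1$ and $A_2$. If $x\in A_1$ then $x=\lfloor n\phi\rfloor$ for some $n$ and we are done. If $x\in A_2$, write $x=\lfloor m(\phi+1)\rfloor=\lfloor m\phi\rfloor+m$; the classical identity relating the two Beatty sequences (namely $\lfloor m(\phi+1)\rfloor=\lfloor(\lfloor m\phi\rfloor+m)\phi\rfloor-1$ for $m\ge 1$, a standard fact about Wythoff pairs) shows $x=\lfloor n\phi\rfloor-1$ with $n=\lfloor m\phi\rfloor+m$; and for $m=0$ one checks $x=0$ directly.

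The only real obstacle is bookkeeping: one must make sure the off-by-one shifts in the case analysis are tracked correctly and that the degenerate small cases ($n=0$, or $x=0,1$) are not silently assumed away. I expect the inductive proof to be the cleanest to write, since it uses only Lemma~\ref{lemmaphi12}(i) and needs no appeal to the Wythoff-pair identity.
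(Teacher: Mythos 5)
Your main (inductive) argument is correct and rests on exactly the same fact as the paper's proof, namely Lemma~\ref{lemmaphi12}(i) that consecutive gaps $\lfloor (n+1)\phi\rfloor-\lfloor n\phi\rfloor$ are $1$ or $2$; the paper simply takes the least $n$ with $x\le\lfloor n\phi\rfloor$ and concludes in one line, whereas you unwind the same observation as a strong induction on $x$. One caution about your ``alternative route'': the identity $\lfloor m(\phi+1)\rfloor=\lfloor(\lfloor m\phi\rfloor+m)\phi\rfloor-1$ is false (for $m=2$ the left side is $5$ while the right side is $\lfloor 5\phi\rfloor-1=7$); the correct way to run that argument is to note that if $x\in A_2$ then $x+1\notin A_2$ (consecutive elements of $A_2$ differ by at least $2$, Lemma~\ref{lemmaphi12}(iii)), so $x+1\in A_1$ by Lemma~\ref{rayleigh}, giving $x=\lfloor n\phi\rfloor-1$. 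Since you state you would submit the inductive version, the proposal stands as a valid proof.
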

\begin{proof}
For any  $x \in \mathbb{Z}_{\geq0}$, there exists $n \in \mathbb{Z}_{\geq0}$ such that 
$\lfloor (n-1) \phi  \rfloor < x \leq \lfloor n \phi \rfloor$.
By $\mathrm{(i)}$ of Lemma \ref{lemmaphi12}, $x = \lfloor n \phi \rfloor$ or $x = \lfloor n \phi \rfloor -1$.
\end{proof}

\begin{lemma}\label{twocases}
For any $n \in \mathbb{N}$ with $n \geq 2$, one of the following statements is true:\\
$(i)$ there exists $m \in \mathbb{Z}_{\geq0}$ such that 
$\lfloor m (\phi + 1) \rfloor $, $\lfloor (n-1) \phi \rfloor$, $\lfloor n \phi \rfloor$ are three consecutive numbers;\\
$(ii)$ there exists $m \in \mathbb{Z}_{\geq0}$ such that 
$\lfloor (n-1) \phi \rfloor$, $\lfloor m (\phi + 1) \rfloor $, $\lfloor n \phi \rfloor$ are three consecutive numbers.
\end{lemma}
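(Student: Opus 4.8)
The plan is to reduce the statement about $\lfloor m(\phi+1)\rfloor$ to a statement purely about the lower Wythoff sequence, using the well-known identity $a_2(m)=\lfloor m(\phi+1)\rfloor=\lfloor m\phi\rfloor+m=a_1(m)+m$. Fix $n\in\mathbb{Z}_{\geq 0}$ and set $a=\lfloor(n-1)\phi\rfloor$ and $b=\lfloor n\phi\rfloor$. By part $(i)$ of Lemma~\ref{lemmaphi12} we have $b-a\in\{1,2\}$, so there is exactly one integer strictly between $a$ and $b$ precisely when $b-a=2$, and none when $b-a=1$; call the candidate middle value $c$, where $c=a+1$ (so that $a,c,b$ are either already the relevant consecutive run, or we must exhibit $c$ as a value of $a_2$). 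The goal is to show that in \emph{every} case we can choose $m$ so that $\lfloor m(\phi+1)\rfloor$ occupies one of the three consecutive slots $a-1,a,a+1$ or $a,a+1,a+2$ in the manner claimed, i.e. it lands either immediately before $a$ (case $(ii)$ with the run being $a,\lfloor m(\phi+1)\rfloor? $) — more precisely, the two cases assert that the triple $\{a,b\}$ together with some term of $A_2$ forms three consecutive integers, with the $A_2$-term either preceding $a$ or lying between $a$ and $b$.

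The key engine is Lemma~\ref{rayleigh}: since $A_1\sqcup A_2=\mathbb{Z}_{\geq 0}$, every integer is either a lower or an upper Wythoff number, and the two sets are disjoint. So I would argue as follows. First suppose $b-a=2$; then the integer $a+1$ lies strictly between two consecutive lower Wythoff numbers $a_1(n-1)$ and $a_1(n)$, hence $a+1\notin A_1$, so by Lemma~\ref{rayleigh}(i) we have $a+1\in A_2$, i.e. $a+1=\lfloor m(\phi+1)\rfloor$ for some $m$. Then $a=\lfloor(n-1)\phi\rfloor$, $a+1=\lfloor m(\phi+1)\rfloor$, $a+2=\lfloor n\phi\rfloor$ are three consecutive integers, which is exactly case $(ii)$. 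Next suppose $b-a=1$, i.e. $a,b=a+1$ are consecutive. Now consider $a-1$. If $a-1\in A_2$ — say $a-1=\lfloor m(\phi+1)\rfloor$ — then $\lfloor m(\phi+1)\rfloor, \lfloor(n-1)\phi\rfloor, \lfloor n\phi\rfloor$ are the three consecutive integers $a-1,a,a+1$, giving case $(i)$. The remaining possibility is $b-a=1$ and $a-1\notin A_2$, hence $a-1\in A_1$; I would rule this out by showing it forces a contradiction with $b-a=1$. Indeed $a-1\in A_1$ together with $a=a_1(n-1)\in A_1$ means $a_1(n-1)$ and its predecessor in $A_1$ differ by $1$; but by Lemma~\ref{lemmaphi12}(i) consecutive gaps in $A_1$ are $1$ or $2$, and it is a standard fact (provable from $\lfloor(n+1)\phi\rfloor-\lfloor(n-1)\phi\rfloor\in\{3,4\}$, i.e. Lemma~\ref{lemmaphi12}(ii)) that two consecutive gaps cannot both equal $1$. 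Hence if the gap below $a$ is $1$, the gap above $a$ — namely $b-a$ — must be $2$, contradicting $b-a=1$. So this case does not occur, and the proof is complete.

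The main obstacle I anticipate is organizing the case analysis around the \emph{gaps in the lower Wythoff sequence} cleanly: the heart of the matter is the classical ``no two consecutive $1$-gaps'' property of $(\lfloor n\phi\rfloor)_n$, which I would derive from Lemma~\ref{lemmaphi12}(ii) ($a_1(n+2)-a_1(n)\in\{3,4\}$, so one cannot have two consecutive increments of $1$). Once that is in hand, the dichotomy ``$b-a=2$'' versus ``$b-a=1$ and then necessarily $a-1\in A_2$'' covers all $n$, and the Rayleigh partition $A_1\sqcup A_2=\mathbb{Z}_{\geq 0}$ does the rest. A minor point to handle carefully is small $n$ (e.g. $n=0,1$) where the predecessors $a-1$ or $\lfloor(n-1)\phi\rfloor$ might be degenerate; I would check these boundary cases directly against the definitions.
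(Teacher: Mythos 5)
Your proposal is correct and follows essentially the same route as the paper: split on whether $\lfloor n\phi\rfloor-\lfloor(n-1)\phi\rfloor$ equals $1$ or $2$, use Lemma~\ref{lemmaphi12}(ii) to exclude two consecutive gaps of $1$ in the lower Wythoff sequence (the paper states this as $\lfloor(n-2)\phi\rfloor=\lfloor(n-1)\phi\rfloor-2$ in the gap-$1$ case), and invoke the Rayleigh partition $A_1\cup A_2=\mathbb{Z}_{\geq 0}$, $A_1\cap A_2=\emptyset$ to place the skipped integer in $A_2$. Your explicit attention to the degenerate small-$n$ cases is a point the paper's proof silently omits, but the core argument is identical.
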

\begin{proof}
By $\mathrm{(i)}$ of Lemma \ref{lemmaphi12}, 
$\lfloor n \phi \rfloor - \lfloor (n-1) \phi \rfloor$ = 1 or 2.\\
$\mathrm{(i)}$ If $\lfloor n \phi \rfloor - \lfloor (n-1) \phi \rfloor =1$, then
by $\mathrm{(i)}$ and $\mathrm{(ii)}$ of Lemma \ref{lemmaphi12}, $\lfloor (n-2) \phi \rfloor = \lfloor (n-1) \phi \rfloor-2$. Hence by Lemma \ref{rayleigh} there exists $m \in \mathbb{Z}_{\geq0}$ such that 
$\lfloor m (\phi + 1) \rfloor $, $\lfloor (n-1) \phi \rfloor$, $\lfloor n \phi \rfloor$ are three consecutive numbers.\\
$\mathrm{(ii)}$ If $\lfloor n \phi \rfloor-\lfloor (n-1) \phi \rfloor =2$, then by Lemma \ref{rayleigh}, 
there exists $m \in \mathbb{Z}_{\geq0}$ such that $\lfloor (n-1) \phi \rfloor$, $\lfloor m (\phi + 1) \rfloor $, $\lfloor n \phi \rfloor$ are three consecutive numbers.
\end{proof}

\section{$\mathcal{P}$-Positions of Varinat of Wythoff's Game}
In this section, we study the set of $\mathcal{P}$-Positions of the variant in Definition \ref{wythoffvar}, and 
we need to define a function to describe the set of $\mathcal{P}$-Positions.
\begin{definition}\label{newfunction}
Let $\phi= \frac{1+\sqrt{5}}{2}$.\\
$(i)$ We define a function $g$ as
$g(0)=1$, $g(1)=0$, and 
\begin{equation}
g(n)=
\begin{cases}
1-g(m) & (\mbox{ if } \lfloor n \phi  \rfloor=\lfloor m (\phi + 1)  \rfloor +1 \mbox{ for } m \in \mathbb{Z}_{\geq0}),\\ \nonumber
1 & (\mbox{ else }).\nonumber
\end{cases}
\end{equation} 
$(ii)$ Let 
\begin{equation}
P_{1,1}=\{(\lfloor n \phi \rfloor+g( n )-1, \lfloor n (\phi + 1) \rfloor  +g(n)):n \in \mathbb{Z}_{\geq0} \},  \nonumber  
\end{equation}
\begin{equation}
P_{1,2}= \{( \lfloor n (\phi + 1) \rfloor  +g(n),\lfloor n \phi \rfloor+g(n)-1):n \in \mathbb{Z}_{\geq0} \},\nonumber      
\end{equation}
and 
\begin{equation}
P_1= P_{1,1} \cup P_{1,2}\cup \{(0,0),(1,1)\}.  \nonumber    
\end{equation}
\end{definition}

Figures \ref{pcqueen} and \ref{pvqueen} describe the set $P_0$ and the set $P_1$ respectively, and 
in Figure \ref{pcvqueen}, both sets are displayed.
As shown in Figure \ref{pcvqueen}, there are some relationships between the two sets. The function $g$ in Definition \ref{newfunction} describes the relationships between these sets, and as we discuss later, we can define this function $g$ by the well-known Hofstadter G-sequence.

\begin{figure}[H]
\begin{tabular}{cc}
\begin{minipage}[t]{0.5\textwidth}
\begin{center}
	\includegraphics[height=3.5cm]{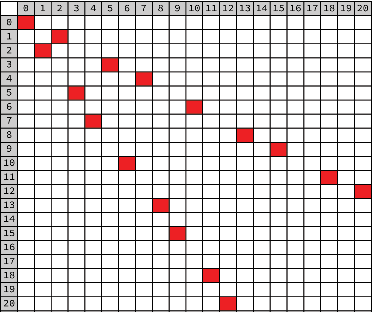}
\captionsetup{labelsep = period}
\caption{$P_0$}
\label{pcqueen}
\end{center}
\end{minipage}
\begin{minipage}[t]{0.5\textwidth}
\begin{center}
\includegraphics[height=3.5cm]{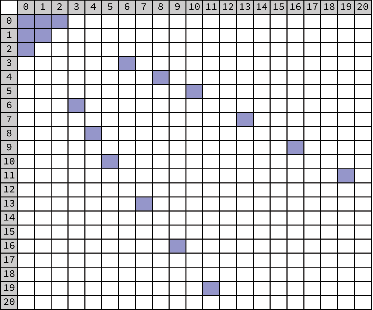}
\captionsetup{labelsep = period}
\caption{$P_1$}
\label{pvqueen}
\end{center}
\end{minipage}
\end{tabular}
\end{figure}

\begin{figure}[H]
\begin{tabular}{c}
\begin{minipage}[t]{0.5\textwidth}
\begin{center}
\includegraphics[height=3.5cm]{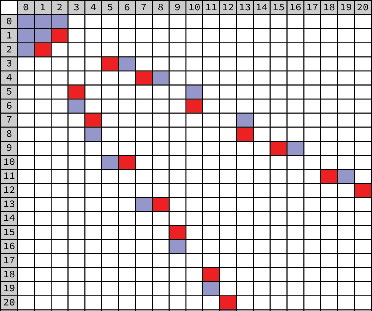}
\captionsetup{labelsep = period}
\caption{$P_0$ and $P_1$}
\label{pcvqueen}
\end{center}
\end{minipage}
\end{tabular}
\end{figure}

\begin{definition}\label{definitionofb1b2}
For $n \in \mathbb{Z}_{\geq0}$, let $B_1=\{b_1(n)=\lfloor n \phi \rfloor+g(n)-1:n \in \mathbb{Z}_{\geq0} \}$ and 
$B_2=\{b_2(n)=\lfloor n (\phi + 1) \rfloor +g(n):n \in \mathbb{Z}_{\geq0} \}$.
\end{definition}

\begin{example}\label{exampleforb1b2}
$(i)$ Here, we caluculate $g(n)$ for $n=0,1,2,3,4$.
By Definition \ref{newfunction}, $g(0)=1$ and $g(1)=0$.
Since $\lfloor 2 \phi \rfloor = 3 = \lfloor (\phi+1) \rfloor +1$,
by Definition \ref{newfunction}, $g(2)=1-g(1)=1$.
Since $\lfloor 3 \phi \rfloor =4$, there is no $m \in \mathbb{Z}_{\geq0}$ such that 
$\lfloor m (\phi+1) \rfloor +1 =4$. Hence
by Definition \ref{newfunction}, $g(3)=1$.
Since $\lfloor 4 \phi \rfloor = 6 = \lfloor 2(\phi+1) \rfloor +1$,
by Definition \ref{newfunction}, $g(4)=1-g(3)=0$.\\
$(i)$ Here, we caluculate $b_1(n)$ and $b_2(n)$ for $n=0,1,2,3,4$.
$b_1(0)=\lfloor 0 \phi \rfloor +g(0)-1=0$, $b_1(1)=\lfloor  \phi \rfloor +g(1)-1=0$,
$b_1(2)=\lfloor 2 \phi \rfloor +g(2)-1=3$, $b_1(3)=\lfloor 3 \phi \rfloor +g(3)-1=4$,
$b_1(4)=\lfloor 4 \phi \rfloor +g(4)-1=5$.

$b_2(0)=\lfloor 0 (\phi +1)\rfloor +g(0)=1$, $b_2(1)=\lfloor  (\phi +1)\rfloor +g(1)=2$,
$b_2(2)=\lfloor  2(\phi +1)\rfloor +g(2)=6$, $b_2(3)=\lfloor  3(\phi +1)\rfloor +g(3)=8$,
$b_2(4)=\lfloor  4(\phi +1)\rfloor +g(4)=10$.
\end{example}

\begin{lemma}\label{compareab}
We have $b_1(n) \leq a_1(n)$ and $b_2(n) \geq a_2(n)$ for $n \in \mathbb{Z}_{\geq0}$.
\end{lemma}
\begin{proof}
This follows directly from Definitions \ref{definitionofa1a2} and \ref{definitionofb1b2}.
\end{proof}

In the remainder of this section, we aim to prove that $P_1$ is the set of 
$\mathcal{P}$-positions of the variant in Definition \ref{wythoffvar}.
We need some lemmas.

 \begin{lemma}\label{fromntoplemma1}
Let $n,m \in \mathbb{N}$ such that 
\begin{equation}
a_1(n) = a_2(m)+1 = a_1(n-1)+2. \label{condition1a}
\end{equation}
Then, one of the following statements is true:\\
$(i)$  $\lfloor n \phi \rfloor  = b_1(n) \in B_1$, 
$\lfloor n \phi \rfloor -1   = b_2(m) \in B_2$, and 
$b_1(n-1) < b_2(m) < b_1(n) < b_2(m+1)$;\\
$(ii)$  
$\lfloor n \phi \rfloor = b_2(m) \in B_2$, $\lfloor n \phi \rfloor -1 = b_1(n) \in B_1$,
$b_1(n-1) < b_1(n) < b_2(m)$, and 
$b_2(m-1) < b_1(n) < b_2(m)$.
\end{lemma}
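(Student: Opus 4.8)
The plan is to read off the values of $g(n)$ and $g(m)$ directly from the hypothesis \eqref{condition1a} and then confirm the two inequality chains by short estimates, using only that $g$ is $\{0,1\}$-valued together with the step-size bounds of Lemma~\ref{lemmaphi12}. Throughout, note that $n,m\in\mathbb{N}$ guarantees that the shifted indices $n-1$, $m-1$, $m+1$ all lie in $\mathbb{Z}_{\geq 0}$, so every term written below is defined.

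First I would observe that the hypothesis contains $a_1(n)=a_2(m)+1$, i.e. $\lfloor n\phi\rfloor=\lfloor m(\phi+1)\rfloor+1$, which is exactly the condition triggering the recursive branch in Definition~\ref{newfunction}(i); hence $g(n)=1-g(m)$. Since $g$ takes only the values $0$ and $1$, this produces two cases, and I would show they are precisely (i) and (ii). If $g(m)=0$ then $g(n)=1$, so by Definition~\ref{definitionofb1b2} we have $b_1(n)=\lfloor n\phi\rfloor+g(n)-1=\lfloor n\phi\rfloor$ and $b_2(m)=\lfloor m(\phi+1)\rfloor+g(m)=\lfloor m(\phi+1)\rfloor=\lfloor n\phi\rfloor-1$, which is the configuration of (i); if $g(m)=1$ then $g(n)=0$, so $b_1(n)=\lfloor n\phi\rfloor-1$ and $b_2(m)=\lfloor m(\phi+1)\rfloor+1=\lfloor n\phi\rfloor$, the configuration of (ii). The memberships $b_1(n)\in B_1$ and $b_2(m)\in B_2$ are immediate from Definition~\ref{definitionofb1b2}.

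For the inequality chains, the inner inequality ($b_2(m)<b_1(n)$ in case (i), $b_1(n)<b_2(m)$ in case (ii)) is clear since one side equals $\lfloor n\phi\rfloor$ and the other $\lfloor n\phi\rfloor-1$. For the left end I would use the second half of the hypothesis, $a_1(n)=a_1(n-1)+2$: since $g(n-1)\leq 1$, we get $b_1(n-1)=a_1(n-1)+g(n-1)-1\leq a_1(n-1)=\lfloor n\phi\rfloor-2$, which is strictly less than both $b_1(n)$ and $b_2(m)$, each of which is at least $\lfloor n\phi\rfloor-1$. For the right end of (i) I would invoke Lemma~\ref{lemmaphi12}(iii): since $a_2(m)=\lfloor n\phi\rfloor-1$, it gives $a_2(m+1)\geq a_2(m)+2=\lfloor n\phi\rfloor+1$, hence $b_2(m+1)\geq a_2(m+1)\geq\lfloor n\phi\rfloor+1>\lfloor n\phi\rfloor=b_1(n)$. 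Symmetrically, the extra lower bound in (ii) follows because Lemma~\ref{lemmaphi12}(iii) yields $a_2(m-1)\leq a_2(m)-2=\lfloor n\phi\rfloor-3$, so $b_2(m-1)\leq a_2(m-1)+1\leq\lfloor n\phi\rfloor-2<\lfloor n\phi\rfloor-1=b_1(n)$. Collecting these inequalities gives the chains in (i) and (ii).

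The whole argument is essentially bookkeeping; the one step that genuinely does something is the first, recognizing that the hypothesis coincides with the recursive branch in the definition of $g$ and then keeping straight which of $b_1(n)$, $b_2(m)$ lands on $\lfloor n\phi\rfloor$ and which on $\lfloor n\phi\rfloor-1$ in each case. I do not expect a real obstacle; the only mild care needed is to keep all the inequalities strict, which the estimate $b_1(n-1)\leq\lfloor n\phi\rfloor-2$ and the ``gap at least $2$'' property of the upper Wythoff sequence comfortably supply.
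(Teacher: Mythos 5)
Your proposal is correct and follows essentially the same route as the paper's proof: read off $g(n)=1-g(m)$ from the hypothesis via Definition~\ref{newfunction}, split on the value of $g(m)$ to identify which of $b_1(n)$, $b_2(m)$ equals $\lfloor n\phi\rfloor$ and which equals $\lfloor n\phi\rfloor-1$, and close the outer inequalities with $b_1(n-1)\leq a_1(n-1)=a_1(n)-2$ and the gap bounds $a_2(m\pm1)$ from Lemma~\ref{lemmaphi12}(iii). The bookkeeping all checks out, so there is nothing further to add.
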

\begin{proof}
By $\mathrm{(iii)}$ of Lemma \ref{lemmaphi12},
\begin{equation}
a_2(m-1) \leq a_2(m)-2 \label{aam1a2m2}   
\end{equation}
and 
\begin{equation}
a_2(m+1) \geq a_2(m)+2. \label{a2m1a2mp2}   
\end{equation}
By Definition \ref{newfunction}  and Equation $(\ref{condition1a})$, 
\begin{equation}
g(n) = 1 - g(m).  \label{gngmeq2}  
\end{equation}
Then, we have two cases.\\
\noindent {\tt Case 1}: If $g(n)=1$, then by Equation $(\ref{gngmeq2})$, $g(m)=0$. Hence, by Equation $(\ref{condition1a})$, we obtain
\begin{equation}
b_1(n) = a_1(n)+1-g(n)=a_1(n)  = \lfloor n \phi \rfloor \label{b1ncal}
\end{equation}
and
\begin{equation}
 b_2(m)   = a_2(m)+g(m)=a_2(m)=a_1(n)-1 = \lfloor n \phi \rfloor -1. \label{b2mcal}
\end{equation}
By Lemma \ref{compareab} and Equations $(\ref{condition1a})$ and $(\ref{b2mcal})$,
\begin{equation}
b_1(n-1) \leq a_1(n-1) < a_2(m) =b_2(m) \label{ineqfirst}  
\end{equation}
, and by  Equations $(\ref{condition1a})$, $(\ref{b1ncal})$, and $(\ref{b2mcal})$,
\begin{equation}
b_2(m)=a_2(m) < a_1(n) = b_1(n). \label{ineqsecond}   
\end{equation}
By Lemma \ref{compareab} and Equations $(\ref{condition1a})$, $(\ref{a2m1a2mp2})$, and $(\ref{b1ncal})$, 
\begin{equation}
b_1(n)=a_1(n) = a_2(m)+1 < a_2(m+1) \leq b_2(m+1). \label{ineqsecond2}   
\end{equation}
By Equations $(\ref{b1ncal})$, $(\ref{b2mcal})$ and Inequalities $(\ref{ineqfirst})$, $(\ref{ineqsecond})$, and 
$(\ref{ineqsecond2})$, we obtain $\mathrm{(i)}$.  \\
\noindent {\tt Case 2}:  If $g(n)=0$, then by Equation $(\ref{gngmeq2})$, $g(m)=1$.
Hence,
\begin{equation}
b_1(n)=a_1(n) + g(n)-1=a_1(n)-1= \lfloor n \phi \rfloor -1,\label{inequa1}
\end{equation}
and by Lemma \ref{compareab} and Equation (\ref{condition1a}), we obtain 
\begin{equation}
b_2(m)=a_2(m)+g(m)=a_2(m)+1= a_1(n)= \lfloor n \phi \rfloor \label{inequa2}   
\end{equation}
and 
\begin{equation}
b_1(n-1) \leq a_1(n-1)<a_1(n)-1 = a_1(n)+g(n)-1= b_1(n).\label{inequa3}
\end{equation}
By Equations $(\ref{inequa1})$, $(\ref{inequa2})$, and Inequality $(\ref{inequa3})$, we obtain 
\begin{equation}
b_1(n-1) < b_1(n) < b_2(m).\label{abn1b1nb2m}  
\end{equation}
By Equations $(\ref{condition1a})$, $(\ref{inequa1})$, and Inequalities $(\ref{aam1a2m2})$, and $(\ref{abn1b1nb2m})$, we obtain
\begin{equation}
b_2(m-1) \leq a_2(m-1)+1  < a_2(m)  = a_1(n)-1 = b_1(n)<b_2(m).   \label{b2m1b1nb2m}   
\end{equation}
By Equations $(\ref{inequa1})$, $(\ref{inequa2})$, Inequalities $(\ref{abn1b1nb2m})$, $(\ref{b2m1b1nb2m})$,
we obtain $\mathrm{(ii)}$.\\
\end{proof}

\begin{lemma}\label{fromntoplemma2}
Let $n,m \in \mathbb{N}$ such that 
\begin{equation}
a_1(n)  =a_1(n-1) + 1 = a_2(m)+2. \label{condition2a}
\end{equation}
Then, one of the following statements is true:\\
$(i)$  $\lfloor n \phi \rfloor  = b_1(n) \in B_1$,
$\lfloor n \phi \rfloor -1 = b_1(n-1) \in B_1$ and 
$b_2(m) < b_1(n-1) < b_1(n) < b_2(m+1)$;\\
$(ii)$  $\lfloor n \phi \rfloor  = b_1(n) \in B_1$,
$\lfloor n \phi \rfloor -1 = b_2(m) \in B_2$ and
$b_1(n-1) < b_2(m) < b_1(n) < b_2(m+1)$.
\end{lemma}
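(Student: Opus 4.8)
The plan is to mirror the structure of Lemma~\ref{fromntoplemma1}: assume the hypothesis~(\ref{condition2a}), extract the recursion for $g$, split into the two cases $g(n)=1$ and $g(n)=0$, and in each case compute $b_1(n-1)$, $b_1(n)$, $b_2(m)$ explicitly and check the required chain of inequalities. First I would record the auxiliary spacing facts: by (iii) of Lemma~\ref{lemmaphi12} we have $a_2(m+1)\ge a_2(m)+2$, and by (i)--(ii) of Lemma~\ref{lemmaphi12}, since $a_1(n)-a_1(n-1)=1$ forces $a_1(n-1)-a_1(n-2)=2$ (two consecutive jumps of $1$ are impossible), we get $a_1(n-2)=a_1(n-1)-2=a_1(n)-3$; combined with the hypothesis $a_2(m)=a_1(n)-2$ this places $a_2(m)$ strictly between $a_1(n-2)$ and $a_1(n-1)$, which is the Beatty-sequence configuration of Lemma~\ref{twocases}(ii). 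This is exactly what licenses the appearance of the index $m$ in (\ref{condition2a}).

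Next, the key computation. Since $a_1(n)=a_2(m)+2$, Definition~\ref{newfunction} does \emph{not} apply to $n$ directly (the defining condition there is $\lfloor n\phi\rfloor=\lfloor m(\phi+1)\rfloor+1$, i.e.\ a gap of $1$, not $2$), so $g(n)$ is governed instead by its value relative to $n-1$: from $a_1(n)=a_1(n-1)+1$ and the structure of the $g$-recursion one reads off the relation between $g(n)$ and $g(n-1)$, and one uses the gap-$1$ condition $a_1(n-1)=a_2(m)+1$ — which \emph{does} match Definition~\ref{newfunction} — to get $g(n-1)=1-g(m)$. In Case~$g(n)=1$: I would show $g(n-1)=0$, whence $b_1(n-1)=a_1(n-1)+g(n-1)-1=a_1(n-1)-1=\lfloor n\phi\rfloor-1$, $b_1(n)=a_1(n)+g(n)-1=a_1(n)=\lfloor n\phi\rfloor$, and $b_2(m)=a_2(m)+g(m)$; tracking whether $g(m)$ is $0$ or $1$ (the two sub-possibilities) yields either conclusion~(i) (when $b_2(m)=\lfloor n\phi\rfloor-2<b_1(n-1)$) or conclusion~(ii) (when $b_2(m)=\lfloor n\phi\rfloor-1=b_1(n-1)$ — wait, this needs care). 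Symmetrically, Case~$g(n)=0$ gives $b_1(n)=\lfloor n\phi\rfloor-1$ and one checks which of~(i),~(ii) holds. The inequalities $b_1(n-1)<b_1(n)$, $b_1(n)<b_2(m+1)$, etc., all reduce to the spacing bounds $a_1(n-1)<a_1(n)$ and $a_1(n)=a_2(m)+2<a_2(m+1)$ (using $a_2(m+1)\ge a_2(m)+2$, so in fact $a_1(n)\le a_2(m+1)$, and the case $a_1(n)=a_2(m+1)$ must be excluded — this is the delicate point) together with the elementary bounds $b_1\le a_1$, $b_2\ge a_2$ noted after Definition~\ref{definitionofb1b2}.

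The main obstacle I anticipate is pinning down exactly how $g(n)$ relates to $g(n-1)$ and $g(m)$ when the gap is $2$ rather than $1$, and correspondingly showing that each of the listed conclusions~(i),~(ii) is forced by a definite combination of the parities of $g(n)$ and $g(m)$, with no fourth combination leaking through. Concretely, I would need the subsidiary fact that the configuration of~(\ref{condition2a}) cannot coexist with $a_1(n)=a_2(m+1)$ — i.e.\ that after a gap-$2$ step from $a_2(m)$ to $a_1(n)$ the next term of the upper sequence is strictly larger — which again follows from Lemma~\ref{lemmaphi12}(iii) and the disjointness of the Beatty sequences (Lemma~\ref{rayleigh}), since $a_1(n)\in A_1$ and $a_2(m+1)\in A_2$ are distinct. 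Once that is in hand, each case is a short finite computation with floor functions, entirely parallel to the proof of Lemma~\ref{fromntoplemma1}, and the two displayed conclusions drop out by substituting the computed values of $b_1(n-1),b_1(n),b_2(m),b_2(m+1)$ into the claimed inequality chains.
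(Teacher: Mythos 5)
Your overall strategy is the right one and is essentially the paper's: compute $b_1(n-1)$, $b_1(n)$, $b_2(m)$ from the hypothesis, get the bound $b_2(m+1)\ge a_2(m+1)>a_1(n)$ from the fact that $\lfloor m(\phi+1)\rfloor,\lfloor(n-1)\phi\rfloor,\lfloor n\phi\rfloor$ are consecutive together with disjointness of the Beatty sequences (Lemma~\ref{rayleigh}), and then verify the two inequality chains. Your handling of the ``delicate point'' $a_1(n)\ne a_2(m+1)$ is correct.

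However, there is a genuine gap in the heart of the argument: your case split is on the wrong quantity. Under hypothesis~(\ref{condition2a}) the value $g(n)$ is not a free parameter to be split on --- the first clause of Definition~\ref{newfunction} cannot fire for $n$, because that would require $\lfloor n\phi\rfloor-1\in A_2$, whereas $\lfloor n\phi\rfloor-1=\lfloor(n-1)\phi\rfloor\in A_1$ and $A_1\cap A_2=\emptyset$; hence the ``else'' branch gives $g(n)=1$ unconditionally, and $b_1(n)=a_1(n)=\lfloor n\phi\rfloor$ in every case. The dichotomy that actually produces conclusions (i) and (ii) is $g(n-1)=1-g(m)\in\{1,0\}$: if $g(n-1)=1$ (so $g(m)=0$) then $b_1(n-1)=a_1(n-1)=\lfloor n\phi\rfloor-1$ and $b_2(m)=a_2(m)=\lfloor n\phi\rfloor-2$, giving (i); if $g(n-1)=0$ (so $g(m)=1$) then $b_1(n-1)=a_1(n-1)-1=\lfloor n\phi\rfloor-2$ and $b_2(m)=a_2(m)+1=\lfloor n\phi\rfloor-1$, giving (ii). Your proposed implication ``$g(n)=1$, whence $g(n-1)=0$'' is false, and the accompanying computation $b_1(n-1)=a_1(n-1)-1=\lfloor n\phi\rfloor-1$ contains an arithmetic slip (it equals $\lfloor n\phi\rfloor-2$). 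You flagged this confusion yourself (``wait, this needs care'') and listed it as the main unresolved obstacle; since resolving it is precisely the content of the lemma's case analysis, the proof as proposed does not go through until the split is redone on $g(n-1)$ (equivalently $g(m)$) rather than on $g(n)$.
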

\begin{proof}
By Definition \ref{newfunction} and Equation $(\ref{condition2a})$, we obtain
\begin{equation}
g(n) = 1  \label{gnequal1}
\end{equation}
and 
\begin{equation}
g(n-1)=1-g(m). \label{gn1euqal1gm}   
\end{equation}
Then,
\begin{equation}
b_1(n)=a_1(n)+g(n)-1=a_1(n)=\lfloor n \phi \rfloor. \label{equatio30} 
\end{equation}
Since $\lfloor m (\phi +1) \rfloor$, $\lfloor (n-1) \phi \rfloor$, and $\lfloor n \phi \rfloor$ are consecutive numbers, by Lemma \ref{rayleigh},
$\lfloor (m+1) (\phi +1) \rfloor > \lfloor n \phi \rfloor$. Hence, by Lemma \ref{compareab} and Equation $(\ref{equatio30})$,
\begin{equation}
b_2(m+1) \geq a_2(m+1) > a_1(n)=b_1(n).\label{equatio4}
\end{equation}
We have two cases.\\
\noindent {\tt Case 1}: If $g(n-1)=1$, then by Equation $(\ref{gn1euqal1gm})$, we obtain  $g(m)=0$. Hence, we obtain 
\begin{equation}
b_2(m)=a_2(m)+g(m)=a_2(m),\label{equatio1}
\end{equation}
and by Equation $(\ref{condition2a})$,
\begin{equation}
b_1(n-1)=a_1(n-1)+g(n-1)-1=a_1(n-1)=\lfloor n \phi \rfloor-1. \label{equatio2}
\end{equation}
By Equations $(\ref{condition2a})$, $(\ref{equatio1})$, and $(\ref{equatio2})$,
\begin{equation}
b_2(m) = a_2(m) = a_1(n-1)-1 = b_1(n-1)-1 < b_1(n-1),\label{inequalityb2b1a}
\end{equation}
and by Equations $(\ref{condition2a})$, $(\ref{equatio30})$, $(\ref{equatio2})$ and Inequality $(\ref{equatio4})$
, we obtain 
\begin{equation}
b_1(n-1) = a_1(n-1) = a_1(n)-1 = b_1(n)-1 < b_1(n) < b_2(m+1). \label{inequalityb2b1b}
\end{equation}
By Equations $(\ref{equatio30})$, $(\ref{equatio2})$, and Inequalities $(\ref{inequalityb2b1a})$ and $(\ref{inequalityb2b1b})$
, we obtain $\mathrm{(i)}$.
\noindent {\tt Case 2}: If $g(n-1)=0$, then by Equation $(\ref{gn1euqal1gm})$, $g(m)=1$. Hence, 
\begin{equation}
b_2(m)=a_2(m)+g(m)=a_2(m)+1=a_1(n-1)=\lfloor n \phi \rfloor-1 \label{equatio10}  
\end{equation}
and 
\begin{equation}
b_1(n-1)=a_1(n-1)+g(n-1)-1 = a_1(n-1)-1. \label{equatio20} 
\end{equation}
By Equations $(\ref{equatio10})$ and $(\ref{equatio20})$,
\begin{equation}
b_1(n-1) = a_1(n-1)-1 < b_2(m)=a_1(n-1). \label{a1n1b2mb1nb2m1in}    
\end{equation}
and
by Equations $(\ref{condition2a})$, $(\ref{equatio30})$, and  Inequality $(\ref{equatio4})$, 
\begin{equation}
a_1(n-1) = a_1(n)-1 = b_1(n)-1 < b_1(n) < b_2(m+1). \label{a1n1b2mb1nb2m1in2}    
\end{equation}
By Equations $(\ref{equatio30})$, $(\ref{equatio10})$ and Inequalities  $(\ref{a1n1b2mb1nb2m1in})$, and $(\ref{a1n1b2mb1nb2m1in2})$, we obtain 
$\mathrm{(ii)}$.
\end{proof}  

\begin{lemma}\label{b1b2lemma}
For the sets $B_1$ and $B_2$, we obtain the following:  \\
$(i)$ for $n \in   \mathbb{N}$,
\begin{equation}
b_2(n-1) < b_2(n);  \label{b1b1big}
\end{equation}
$(ii)$ for $n \in   \mathbb{N}$ with $n \geq 2$,
\begin{equation}
b_1(n-1) < b_1(n);  \label{b1b1big2}
\end{equation}
$(iii)$
\begin{equation}
B_1 \cup B_2 = \mathbb{Z}_{\geq0};  \label{b1b2eqz}
\end{equation}
$(iv)$
\begin{equation}
B_1 \cap B_2 = \emptyset.  \label{b1b2empty} 
\end{equation}
\end{lemma}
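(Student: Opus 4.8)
The plan is to establish the four displayed assertions in turn, using the structural results already in hand---Lemmas~\ref{lemmaphi12}, \ref{nn1eqx}, \ref{twocases}, \ref{fromntoplemma1} and~\ref{fromntoplemma2}, the Beatty partition of Lemma~\ref{rayleigh}, and the fact that $g$ takes only the values $0$ and $1$---together with the identity $a_2(n)=a_1(n)+n$ (valid since $\phi+1=\phi^2$), whence $b_2(n)-b_1(n)=n+1$ for the sequences of Definition~\ref{definitionofb1b2}. The two monotonicity claims are the easiest. Writing $b_2(n)-b_2(n-1)=\bigl(a_2(n)-a_2(n-1)\bigr)+\bigl(g(n)-g(n-1)\bigr)$, the first bracket is at least $2$ by~$\mathrm{(iii)}$ of Lemma~\ref{lemmaphi12} and the second is at least $-1$, which gives~(\ref{b1b1big}); and $b_1(n)-b_1(n-1)=\bigl(a_1(n)-a_1(n-1)\bigr)+\bigl(g(n)-g(n-1)\bigr)$ can fail to be positive, by~$\mathrm{(i)}$ of Lemma~\ref{lemmaphi12}, only if $a_1(n)-a_1(n-1)=1$ while $g(n-1)=1$ and $g(n)=0$. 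But $g(n)=0$ forces, through Definition~\ref{newfunction}, $a_1(n)=a_2(m)+1$ for some $m$, so $a_1(n)-1=a_2(m)\in A_2$; while $a_1(n)-a_1(n-1)=1$ gives $a_1(n)-1=a_1(n-1)\in A_1$, and for $n\ge 2$ this contradicts $A_1\cap A_2=\emptyset$ (Lemma~\ref{rayleigh}), so the bad case never occurs and~(\ref{b1b1big2}) follows, the single small index being checked directly.

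For the covering identity~(\ref{b1b2eqz}) I would first observe, using Lemma~\ref{nn1eqx}, that every $k\in\mathbb{Z}_{\ge 0}$ is of the form $a_1(n)$ or $a_1(n)-1$, so $\mathbb{Z}_{\ge 0}=\bigcup_{n}\{a_1(n)-1,\,a_1(n)\}$; it therefore suffices to show that for each $n$ both of the integers $a_1(n)-1$ and $a_1(n)$ lie in $B_1\cup B_2$. Fixing $n$ and applying Lemma~\ref{twocases}, case~$\mathrm{(i)}$ is precisely the hypothesis of Lemma~\ref{fromntoplemma2} and case~$\mathrm{(ii)}$ is precisely the hypothesis of Lemma~\ref{fromntoplemma1}; inspecting the two conclusions of whichever lemma applies, one sees in every sub-case that $\{a_1(n)-1,\,a_1(n)\}$ equals $\{b_1(n-1),b_1(n)\}$ or $\{b_2(m),b_1(n)\}$. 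Hence both integers belong to $B_1\cup B_2$, and~(\ref{b1b2eqz}) follows (the finitely many small indices being handled by direct computation).

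For the intersection statement~(\ref{b1b2empty}) I would argue straight from the definitions: if $b_1(n)=b_2(m)$ then $a_1(n)-a_2(m)=g(m)-g(n)+1\in\{0,1,2\}$, so $a_1(n)=a_2(m)+d$ with $d\in\{0,1,2\}$. If $d=1$, Definition~\ref{newfunction} gives $g(n)=1-g(m)$, which is incompatible with $g(m)-g(n)+1=1$. If $d=2$, then $g(n)=0$, so by Definition~\ref{newfunction} again $a_1(n)-1\in A_2$, i.e.\ $a_2(m)+1\in A_2$, contradicting the gap estimate $a_2(m+1)-a_2(m)\ge 2$ of~$\mathrm{(iii)}$ of Lemma~\ref{lemmaphi12}. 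If $d=0$, then $a_1(n)=a_2(m)$, which by Lemma~\ref{rayleigh} can only occur at the origin, where the values are checked directly. Thus $B_1$ and $B_2$ are disjoint apart from that trivial overlap, which gives~(\ref{b1b2empty}).

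The step I expect to demand the most care is the covering identity~(\ref{b1b2eqz}): each sub-case is a one-line deduction from Lemma~\ref{fromntoplemma1} or~\ref{fromntoplemma2}, but one must keep careful track of which of $b_1(n-1)$, $b_1(n)$ and $b_2(m)$ realises $a_1(n)-1$ and which realises $a_1(n)$, and of how this bookkeeping is governed by the value $g(n)$, whose definition is self-referential. The monotonicity and disjointness parts, by contrast, both reduce to the single observation that ``$g(n)=0$'' forces $a_1(n)-1$ into the upper Wythoff sequence, after which Lemma~\ref{lemmaphi12} and the Beatty partition close the argument; the only genuinely delicate points there are the small indices at which $A_1$ and $A_2$ share the value $0$, which one simply verifies by inspection.
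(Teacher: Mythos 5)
Your proposal is correct in substance, and for two of the four claims it coincides with the paper's own argument: the monotonicity of $b_2$ is the same gap estimate from Lemma~\ref{lemmaphi12}(iii) plus $|g(n)-g(n-1)|\le 1$, and your proof of $B_1\cup B_2=\mathbb{Z}_{\geq0}$ --- reduce every integer to one of $a_1(n)-1,\,a_1(n)$ via Lemma~\ref{nn1eqx}, split according to Lemma~\ref{twocases}, and read the memberships off the conclusions of Lemmas~\ref{fromntoplemma1} and~\ref{fromntoplemma2} --- is exactly what the paper does. Where you genuinely diverge is on the remaining two claims. The paper also derives $b_1(n-1)<b_1(n)$ and the disjointness from Lemmas~\ref{fromntoplemma1} and~\ref{fromntoplemma2}, by observing that in every case of those lemmas $b_1(n)$ sits strictly between consecutive elements of $B_2$ and strictly above $b_1(n-1)$, so both facts are read off the displayed chains of inequalities. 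You instead argue directly from Definition~\ref{newfunction}: the only way $b_1$ can fail to increase, or $b_1(n)$ can collide with some $b_2(m)$, is if $g(n)=0$, which forces $a_1(n)-1$ into the upper Wythoff sequence and then contradicts the Beatty partition (Lemma~\ref{rayleigh}) or the gap bound of Lemma~\ref{lemmaphi12}(iii); your case analysis $d\in\{0,1,2\}$ for the intersection is complete and each case closes correctly. Your route is shorter and isolates the one mechanism ($g(n)=0$ exactly when $a_1(n)-1\in A_2$) that makes the lemma true, while the paper's route buys nothing new but reuses inequalities it has already established for Lemma~\ref{frompnotp}. One shared caveat: a direct check at the small indices, which you correctly defer to, shows $b_1(0)=b_1(1)=0$, so (\ref{b1b1big2}) fails at $n=1$ as literally stated, and $0\notin B_2$, so the intersection is in fact empty rather than $\{0\}$; the paper's own case analysis (which needs $m\in\mathbb{N}$) has the identical boundary gap, so this is a defect of the lemma's statement rather than of your proof.
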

\begin{proof}
For $n \in   \mathbb{N}$, Inequality $(\ref{b1b1big})$ follows directly from Definition \ref{definitionofb1b2} and $\mathrm{(iii)}$ of Lemma \ref{lemmaphi12}. For any $x \in \mathbb{Z}_{\geq0}$, by Lemma \ref{nn1eqx}, there exists $n \in \mathbb{Z}_{\geq0}$ such that $x = \lfloor n \phi \rfloor$ or $x = \lfloor n \phi \rfloor -1$. By Lemmas \ref{fromntoplemma1} and \ref{fromntoplemma2},  $\{ \lfloor n \phi \rfloor, \lfloor n \phi \rfloor-1\} \subset B_1 \cup B_2$. Hence, $x \in B_1 \cup B_2$, and we obtain Equation $(\ref{b1b2eqz})$. Next we prove Inequality $(\ref{b1b1big2})$ and Equation $(\ref{b1b2empty})$.  By Example \ref{exampleforb1b2}, we have  $b_1(0)=b_1(1)=0$, $b_1(2)=3$, $b_2(0)=1$, $b_2(1)=2$. Hence, it is sufficient to study the case where $n \geq 2$.  By $\mathrm{(i)}$ of Lemma \ref{lemmaphi12}, we have two cases.\\
\noindent {\tt Case 1}: Suppose that $\lfloor n \phi \rfloor = \lfloor (n-1) \phi  \rfloor +2 $. Then, by Lemma \ref{rayleigh}, we obtain 
\begin{equation}
\lfloor m \phi \rfloor = \lfloor n \phi  \rfloor +1 \nonumber
\end{equation}
for some $m$, and by Lemma \ref{fromntoplemma1} we have two subcases.\\
\noindent {\tt Subcase 1}:
Suppose that for  $n,m \in \mathbb{N}$ $b_1(n-1) < b_2(m) < b_1(n) < b_2(m+1)$. Then, we obtain
Inequality $(\ref{b1b1big2})$. By $b_2(m) < b_1(n) < b_2(m+1)$, we obtain Equation $(\ref{b1b2empty})$.\\
\noindent {\tt Subcase 2}:
We suppose that $b_1(n-1) < b_1(n) < b_2(m)$, and $b_2(m-1) < b_1(n) < b_2(m)$.
We obtain Inequality $(\ref{b1b1big2})$ and Equation $(\ref{b1b2empty})$.\\
\noindent {\tt Case 2}: Suppose that $\lfloor n \phi \rfloor = \lfloor (n-1) \phi  \rfloor +1 $. Then, we obtain 
$\lfloor m \phi \rfloor + 2= \lfloor n \phi  \rfloor $ for some $m$, and by Lemma \ref{fromntoplemma2} we have two subcases.\\
\noindent {\tt Subcase 1}:
Suppose that $b_2(m) < b_1(n-1) < b_1(n) < b_2(m+1)$. Then, we obtain Inequality $(\ref{b1b1big2})$ and Equation $(\ref{b1b2empty})$.\\
\noindent {\tt Subcase 2}:
Suppose that $b_1(n-1) < b_2(m) < b_1(n) < b_2(m+1)$. Then, we obtain Inequality $(\ref{b1b1big2})$ and Equation $(\ref{b1b2empty})$.
\end{proof}

\begin{lemma}\label{frompnotp}
If we start with a position $(x,y) \in P_1$,
we cannot reach a position in $P_1$.
\end{lemma}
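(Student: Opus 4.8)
The plan is a direct case analysis over the three move types, leaning entirely on the combinatorial facts about $B_1$ and $B_2$ already collected in Lemma \ref{b1b2lemma}, together with the one elementary identity
\[
b_2(n)-b_1(n)=\lfloor n(\phi+1)\rfloor-\lfloor n\phi\rfloor+1=n+1,
\]
valid because $n(\phi+1)=n\phi+n$ with $n$ an integer. First I would reduce to a single family: the reflection $(x,y)\mapsto(y,x)$ interchanges $P_{1,1}$ and $P_{1,2}$, fixes $\{(0,0),(1,1)\}$, and sends $\textit{move}(x,y)$ to $\textit{move}(y,x)$, so it suffices to rule out moves from $(x,y)\in P_{1,1}\cup\{(0,0),(1,1)\}$. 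Among these, $(0,0)$, $(1,1)$, and $(b_1(0),b_2(0))=(0,1)$, $(b_1(1),b_2(1))=(0,2)$ all lie in the terminal set $\{x+y\le 2\}$, hence admit no legal moves, and nothing is to be proved for them. So it remains to treat $(x,y)=(b_1(n),b_2(n))$ with $n\ge 2$; by monotonicity of $b_1,b_2$ (Lemma \ref{b1b2lemma}) we then have $x\ge b_1(2)=3$, $y\ge b_2(2)=6$, and $y-x=n+1\ge 3$.

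Next I would go through $\textit{move}(x,y)=M_1(x,y)\cup M_2(x,y)\cup M_3(x,y)$ set by set. For a horizontal target $(u,y)$, $u<x$: $y\ge 6$ excludes $(0,0),(1,1)$; if $(u,y)\in P_{1,1}$ then $y=b_2(k)$ forces $k=n$ by injectivity of $b_2$, so $u=b_1(n)=x$, contradicting $u<x$; if $(u,y)\in P_{1,2}$ then $y\in B_1\cap B_2=\{0\}$, impossible. For a vertical target $(x,v)$, $v<y$: $x\ge 3$ excludes $(0,0),(1,1)$; if $(x,v)\in P_{1,1}$ then $x=b_1(k)$, and since $b_1(0)=b_1(1)=0<x$ and $b_1$ is strictly increasing on indices $\ge 2$, $k=n$, hence $v=b_2(n)=y$, a contradiction; if $(x,v)\in P_{1,2}$ then $x\in B_1\cap B_2=\{0\}$, impossible. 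For a diagonal target $(x-t,y-t)$, $1\le t\le x$: the two coordinates still differ by $n+1\ge 3$, so it is not $(0,0)$ or $(1,1)$; if it is in $P_{1,1}$ then $b_2(k)-b_1(k)=(y-t)-(x-t)=n+1$, but $b_2(k)-b_1(k)=k+1$, so $k=n$ and $x-t=b_1(n)=x$, forcing $t=0$; if it is in $P_{1,2}$ then $b_2(k)-b_1(k)=-(n+1)<0$, impossible. This exhausts the moves and proves the lemma.

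I do not expect a substantive obstacle here: once Lemma \ref{b1b2lemma} is available, the Beatty-type structure of the perturbed sequences — strict monotonicity of $b_1,b_2$ together with $B_1\cup B_2=\mathbb{Z}_{\geq 0}$ and $B_1\cap B_2=\{0\}$ — does all the work, and each case closes in one line. The points that genuinely need care are bookkeeping: (i) identifying precisely which members of $P_1$ are terminal, so the degenerate indices $n=0,1$ — which are also where $b_1$ fails to be injective, $b_1(0)=b_1(1)=0$ — are disposed of before the main argument; and (ii) using the difference identity $b_2(n)-b_1(n)=n+1$ in the diagonal case, since there neither $b_1$ nor $b_2$ alone pins down the target index. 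If anything is worth writing out fully it is the diagonal case, for exactly that reason, but it too is routine.
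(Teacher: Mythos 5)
Your proof is correct and follows essentially the same route as the paper's: both rest on Lemma \ref{b1b2lemma} (monotonicity and $B_1\cap B_2=\{0\}$) for the horizontal and vertical moves, and on the constant-difference identity $b_2(n)-b_1(n)=n+1$ for the diagonal move. Your write-up is in fact more careful than the paper's, which silently skips the extra elements $(0,0),(1,1)$ of $P_1$ and the degenerate indices $n=0,1$ where $b_1(0)=b_1(1)=0$; you correctly observe that those positions lie in the terminal set and so require no argument.
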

\begin{proof} 
Since $\{(x,y):x+y\leq 2\}$ is the set of terminal positions,
$(b_1(0),b_2(0))=(0,1)$, $(b_2(0),b_1(0))=(1,0)$, $(b_1(1),b_2(1))=(0,2)$, $(b_2(1),b_1(1))=(2,0)$ are terminal positions.
Hence it is sufficient to prove for $(x,y) = (b_1(n), b_2(n)) \in P_1$ with $n \geq 2$.
Let $m \in \mathbb{Z}_{\geq0}$ such that $m < n$.
For $(s,t) = (b_1(m), b_2(m))$ and $(u,v) = (b_2(m), b_1(m))$, by Lemma \ref{b1b2lemma}, we have
$b_1(n) \ne b_1(m), b_2(m)$ and $b_2(n) \ne b_1(m), b_2(m)$. Hence $M_i(x,y) \cap \{(u,v),(s,t)\} = \emptyset$ for $i=1,2$.
 $b_1(n)-b_2(n)=-n-1$, $b_1(m)-b_2(m)=-m-1$, and 
$b_2(m)-b_1(m)=m+1$, and hence 
$M_3(x,y) \cap \{(u,v),(s,t)\} = \emptyset$.
\end{proof}

Next, we aim to prove that from any position $(x,y) \notin P_1$,
we can reach a position in $P_1$. We need some lemmas.

\begin{lemma}\label{formptopg} If you can move to a position in $P_0$ by the diagonal move from a position that is not in $P_1$, you can move to a position in $P_1$ by the diagonal move. 
\end{lemma}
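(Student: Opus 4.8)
The plan is to use the single structural fact that a diagonal move leaves the difference $y-x$ of the two coordinates unchanged, and to match up the ``diagonals'' of $P_0$ and $P_1$. Since $a_2(k)-a_1(k)=\lfloor k(\phi+1)\rfloor-\lfloor k\phi\rfloor=k$ and, by Definition \ref{newfunction}, $b_2(n)-b_1(n)=(a_2(n)+g(n))-(a_1(n)+g(n)-1)=n+1$, the element $(a_1(k),a_2(k))$ of $P_0$ and the element $(b_1(k-1),b_2(k-1))$ of $P_1$ both lie on the line $y-x=k$. So a diagonal move reaching the former starts from a position from which, a priori, a diagonal move could also reach the latter; the remaining work is to check that this candidate move is legal and that a degenerate case is handled.

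First I would fix $(x,y)\notin P_1$ together with a diagonal move from $(x,y)$ landing in $P_0$. By the left–right symmetry of both $P_0$ and $P_1$ it suffices to treat the case where the reached position is $(a_1(k),a_2(k))$ for some $k\in\mathbb{Z}_{\geq0}$; then $(x,y)=(a_1(k)+t,a_2(k)+t)$ for some $t\geq1$, so $y-x=k$ and $a_1(k)\leq x$. The degenerate case $k=0$ is handled separately: here $x=y$ and the reached position is $(0,0)$, so $x=y=t\geq1$; since $(0,0),(1,1)\in P_1$ while $(x,y)\notin P_1$, we get $x=y\geq2$, and the diagonal move of step $x-1$ reaches $(1,1)\in P_1$.

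For $k\geq1$ I would propose the target $(b_1(k-1),b_2(k-1))\in P_{1,1}\subseteq P_1$. Using $b_1(n)\leq a_1(n)$ (noted after Definition \ref{definitionofb1b2}) and Lemma \ref{lemmaphi12}(i), we have $b_1(k-1)\leq a_1(k-1)<a_1(k)\leq x$, so $t':=x-b_1(k-1)\geq1$; moreover $y-t'=(y-x)+b_1(k-1)=k+b_1(k-1)=b_2(k-1)$ by the difference identity above, and $t'\leq x$, $t'\leq y$ because $b_1(k-1),b_2(k-1)\geq0$, hence $t'\leq\min(x,y)$ as Definition \ref{movewythoff} requires. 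Thus $(x-t',y-t')=(b_1(k-1),b_2(k-1))\in P_1$ is a legal diagonal move from $(x,y)$, which is the assertion.

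I do not expect a real obstacle: the whole argument reduces to the two elementary identities $a_2(k)-a_1(k)=k$ and $b_2(n)-b_1(n)=n+1$ together with the monotone comparison $b_1\leq a_1$. The only points needing a little care are ensuring the step size $t'$ is strictly positive — which is exactly where the hypothesis $(x,y)\notin P_1$ and the inequality $a_1(k-1)<a_1(k)$ are used — and that $t'$ does not exceed $\min(x,y)$ (nonnegativity of $b_1,b_2$), plus isolating the $k=0$ corner case where the $P_0$-target degenerates to the origin.
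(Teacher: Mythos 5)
Your argument is correct and is essentially the paper's own proof: both match the diagonal $y-x=k$ of $P_0$ (through $(a_1(k),a_2(k))$) with the diagonal of $P_1$ through $(b_1(k-1),b_2(k-1))$, using $a_2(k)-a_1(k)=k=b_2(k-1)-b_1(k-1)$, and both dispose of the degenerate case by moving $(x,y)\to(1,1)$ when the $P_0$ target is $(0,0)$. Your version is slightly more explicit about legality of the step size $t'$ (positivity via $b_1(k-1)\le a_1(k-1)<a_1(k)\le x$), which the paper leaves implicit in the inclusion $M_3(u,v)\subset M_3(x,y)$.
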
 
\begin{proof} If you can move from $(x,y) \notin P_1$ to $(0,0)$, then you can move to $(1,1) \in P_1$. It is sufficient to prove that you can move to $(u,v) = (\lfloor (n+1) \phi  \rfloor + n +1, \lfloor (n+1) \phi  \rfloor) \in P_0$. Since  $(\lfloor n (\phi + 1) \rfloor +g(n),\lfloor n \phi \rfloor+g(n)-1) \in P_1$, \begin{equation} (\lfloor n (\phi + 1) \rfloor +1, \lfloor n \phi \rfloor) \text{ or } (\lfloor n (\phi + 1) \rfloor , \lfloor n \phi \rfloor -1) \in P_1. \label{g0org1} \end{equation} Since \begin{equation} \{(\lfloor n (\phi + 1) \rfloor +1, \lfloor n \phi \rfloor), (\lfloor n (\phi + 1) \rfloor , \lfloor n \phi \rfloor -1) \}\subset   M_3(u,v) \subset M_3(x,y),\nonumber \end{equation} by $(\ref{g0org1})$, we finish the proof. 
\end{proof}

Now we begin to prove that from a position not in $P_1$ we can move to a position in $P_1$. A relatively complicated proof is required because $M_1$, $M_2$ or $M_3$ must be chosen according to the starting position $(x,y)$. Let's me illustrate the method of proof using Example \ref{examfortwofigures}.

\begin{example}\label{examfortwofigures}
In Figures~\ref{graph4}, the positions denoted by red squares belong to $P_0$, and those denoted by light blue squares belong to $P_1$.
We prove that from a position that is not printed in light blue, we can move to a
position printed in light blue.

First, we study the positions in the blue area, and this area is
between two positions $(\lfloor 2 \phi  \rfloor, \lfloor 2 \phi \rfloor + 2)=(3,5)\in P_0$
and $(\lfloor 4 \phi  \rfloor + 4, \lfloor 4 \phi \rfloor)=(10,6)\in P_0$.

We denote positions in the blue area by 
$(x,y)=(\lfloor 2 \phi  \rfloor + k, \lfloor 2 \phi \rfloor + 2+u)=(3 + k, 5+u)$ 
for $k = 1,2,3,4,5,6$ and $u=0,1$.

We have two cases. In the first case, we suppose that $1 \leq k \leq 2+u$. From the point $(x,y)=(3 + k, 5+u)$, we can move to the point $(v,w)=(\lfloor (2+u-k) \phi   \rfloor , \lfloor (2+u-k) (\phi +1)  \rfloor  \in P_{0,1}$ by the diagonal move $M_3$, because $y-x=2+u-k=w-v$. 
Then, passing through this position in $P_0$, by Lemma \ref{formptopg}, we can move to a position in $P_{1,1}$ by the diagonal move $M_3$.

In the second case, we suppose that $2+u < k \leq 6$. We have two subcases.

In the first subcase, we suppose that $u=0$ and $k=6$. 
From the point $(x,y) = (3+k,5+u)=(9,5)$, we can move to $(u,v)=(8,4) \in P_{1,2}$ by the diagonal move without passing through any position in $P_0$.

In the second subcase, we suppose that $u=1$ or $k<6$.
From the position $(x,y)=(3 + k, 5 +u)$,
we can move to $(v,w)=(\lfloor (k-2-u) (\phi + 1)  \rfloor , \lfloor (k-2-u) \phi   \rfloor ) \in P_{0,2}$ by the diagonal move $M_3$, because $y-x=u+2-k = w-v$.
Then, by Lemma \ref{formptopg}, we can move to a position in $P_{1}$ by the diagonal move $M_3$. \\

Next, we discuss the area to the left of the blue area. Here, we denote the positions in the area 
as $(3 - k, 5 +u)$ with $k=0,1,2,3$ and $u=0,1$.
We have a red square, a light blue square and yellow squares. A red square is a position in $P_0$, and hence, by Lemma \ref{formptopg}, we can move to a position in $P_{1,1}$ by the diagonal move $M_3$. We need not consider the light blue square, which is a position in $P_1$. From one of the positions denoted by yellow squares, we can reach a position in $P_1$ by the vertical move $M_2$, but it depends on the value of $3 - k$ that we can move to a position in $P_{1,1}$ or $P_{1,2}$.   On the right side of the blue area, we denote the positions as $(10 + k, 6 -u)$ with $k=0,1,2,3, \dots $ and $u=0,1$. We have a red square, a light blue square and green squares. A red square is a position in $P_0$, and by Lemma \ref{formptopg}, we can move to a position in $P_{1,2}$ by the diagonal move $M_3$. We need not consider the light blue square that is a position in $P_1$. From one of the positions denoted by green squares, we can reach $(10,5) \in P_{1,2}$ or $(3,6) \in P_{1,1}$ by the horizontal move $M_2$.
\end{example}

In Example \ref{examfortwofigures}, we used 
Figure \ref{graph4}, but in Lemma \ref{fromntoplemma1b},
we have the situation whose example can be described by Figure 
\ref{graph1}.

\begin{figure}[H]
\includegraphics[height=2.7cm]{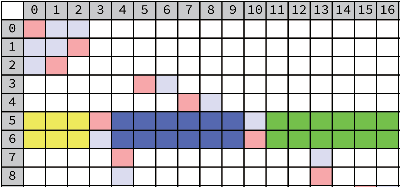}
\caption{ \ }\label{graph4}
\end{figure}

\begin{figure}[H]
\includegraphics[height=3.9cm]{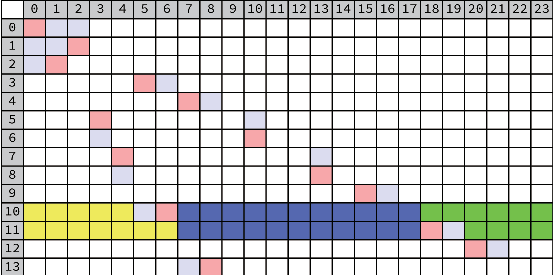}
\caption{ \ }\label{graph1}
\end{figure}

\begin{lemma}\label{fromntoplemma1b}
Let $n,m \in \mathbb{N}$ such that 
\begin{equation}
\lfloor n \phi \rfloor = \lfloor m (\phi +1)  \rfloor  +1. \label{condition1}
\end{equation}
Suppose that $(x,y) \notin P_1$ and $y= \lfloor n \phi \rfloor $ or 
$y= \lfloor n \phi \rfloor -1$.
Then $\textit{move}(x,y) \cap P_1 \ne \emptyset$.
\end{lemma}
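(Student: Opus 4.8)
The plan is to exploit Lemma~\ref{fromntoplemma1}, which --- once one observes that hypothesis~(\ref{condition1}) forces the stronger hypothesis~(\ref{condition1a}) --- pins down the two rows $y=\lfloor n\phi\rfloor$ and $y=\lfloor n\phi\rfloor-1$ completely. Indeed, since $\lfloor n\phi\rfloor-1=\lfloor m(\phi+1)\rfloor=a_2(m)$ lies in $A_2$ and hence not in $A_1$ (Lemma~\ref{rayleigh}), Lemma~\ref{lemmaphi12}$\mathrm{(i)}$ forces $a_1(n)-a_1(n-1)=2$, so $a_1(n)=a_2(m)+1=a_1(n-1)+2$ and Lemma~\ref{fromntoplemma1} applies with this same $m$. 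Whichever of its two cases holds, we then know exactly which of $\lfloor n\phi\rfloor,\lfloor n\phi\rfloor-1$ is a $b_1$-value and which is a $b_2$-value; together with the facts that $b_1,b_2$ are strictly increasing and $B_1\cup B_2=\mathbb{Z}_{\geq0}$, $B_1\cap B_2=\{0\}$ (Lemma~\ref{b1b2lemma}), this identifies the unique element of $P_{1,1}\cup P_{1,2}$ and the unique element of $P_0$ in each of the two rows. Write $q_1=a_1(m)$ for the $x$-coordinate of the $P_{0,1}$-point $(a_1(m),a_2(m))$ in row $\lfloor n\phi\rfloor-1$ and $q_2=a_2(n)$ for that of the $P_{0,2}$-point $(a_2(n),a_1(n))$ in row $\lfloor n\phi\rfloor$; then $q_1<q_2$.

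Now take $(x,y)\notin P_1$ with $y\in\{\lfloor n\phi\rfloor,\lfloor n\phi\rfloor-1\}$ and split on the size of $x$, mirroring Example~\ref{examfortwofigures}. If $x\ge q_2$: a horizontal move $M_1$ reaches the $P_1$-point of the same row (whose $x$-coordinate is at most $q_2$), the only exception being the point $(a_2(n),a_1(n))$ in Case~$\mathrm{(i)}$ of Lemma~\ref{fromntoplemma1}, where that $P_1$-point lies to its right; there a direct diagonal move $M_3$ onto $(b_2(n-1),b_1(n-1))\in P_{1,2}$ on the diagonal $x-y=n$ works, being legal since $n\le b_2(n-1)\le a_2(n-1)+1\le a_2(n)-1$. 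If $q_1<x<q_2$: when $x\le y$ the point $(a_1(y-x),a_2(y-x))\in P_{0,1}$ on the diagonal through $(x,y)$ has first coordinate strictly less than $x$ (using $y-x\le m$ and $a_1(j)+j=a_2(j)$), so $(x,y)$ can diagonal-move onto $P_0$ and Lemma~\ref{formptopg} yields a diagonal move to $P_1$; when $x>y$ the analogous $P_{0,2}$-point $(a_2(x-y),a_1(x-y))$ is likewise reachable and Lemma~\ref{formptopg} again finishes, with the single exception of $(q_2-1,\lfloor n\phi\rfloor-1)$, on whose diagonal the $P_{0,2}$-point lies strictly beyond $(x,y)$; from that position a direct diagonal (or, in the borderline sub-case $b_2(n-1)=a_2(n)-1$, vertical) move lands on $(b_2(n-1),b_1(n-1))\in P_{1,2}$. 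If $x\le q_1$: from the $P_{0,1}$-point $(a_1(m),a_2(m))$ a diagonal move onto $(b_1(m-1),b_2(m-1))\in P_{1,1}$ does the job; from every other non-$P_1$ position with $x\le q_1$ a vertical move $M_2$ reaches the unique $P_1$-point of its column, which sits below $(x,y)$ because $x$ is small, the sole remaining exception being $(a_1(m),\lfloor n\phi\rfloor)$ in Case~$\mathrm{(i)}$ (not in $P_1$), handled by a diagonal move of length $1$ onto $(b_1(m),b_2(m))\in P_{1,1}$. Positions near the terminal set, and the finitely many with $n$ or $m$ below a fixed bound, are checked directly against the explicit description of $P_1$.

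The routine-but-delicate part is checking, for each move above, that it is legal (both coordinates strictly decrease and stay nonnegative) and lands exactly on the named $P_1$-element; this rests only on Lemma~\ref{lemmaphi12}, the identity $a_2(j)=a_1(j)+j$, and the value $g(n)\in\{0,1\}$ coming from Lemma~\ref{fromntoplemma1}. The main obstacle is organizational: keeping Case~$\mathrm{(i)}$ and Case~$\mathrm{(ii)}$ apart, and isolating exactly the handful of boundary positions --- the two on-diagonal $P_0$-points, the position $(q_2-1,\lfloor n\phi\rfloor-1)$, and the stray point $(a_1(m),\lfloor n\phi\rfloor)$ --- for which the clean ``move onto $P_0$, then apply Lemma~\ref{formptopg}'' route is unavailable and a hand-built diagonal or vertical move must be supplied. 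The only genuinely quantitative point, that for $q_1<x<q_2$ with $x>y$ the on-diagonal $P_{0,2}$-point is reachable, reduces via $a_2(j)=a_1(j)+j$ to the inequality $a_1(j)\le a_2(m)+u$ for $1\le j\le n-u$, $u\in\{0,1\}$, and this is immediate from $a_1(n)=a_1(n-1)+2$ once the single pair $(j,u)=(n,0)$ --- which is precisely the excluded position $(q_2-1,\lfloor n\phi\rfloor-1)$ --- has been set aside.
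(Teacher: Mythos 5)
Your proof is correct and follows essentially the same route as the paper's: the same three-way split of each row at the $P_0$ anchors $a_1(m)$ and $a_2(n)$, with vertical moves on the left, diagonal moves through a $P_0$ point (finished by Lemma~\ref{formptopg}) in the middle, horizontal moves on the right, and the same handful of boundary exceptions handled by hand-built diagonal moves. The only blemish is your ``borderline sub-case $b_2(n-1)=a_2(n)-1$'': there the position $(q_2-1,\lfloor n\phi\rfloor-1)$ equals $(b_2(n-1),b_1(n-1))\in P_{1,2}$ and is therefore excluded by the hypothesis $(x,y)\notin P_1$, so no move is needed (and the ``vertical move'' you offer would be vacuous, landing on the starting square).
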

\begin{proof}
By Equation $(\ref{condition1})$ and Definition \ref{newfunction}, 
\begin{equation}
g(n) = 1 - g(m).  \label{gngmeq}  
\end{equation}
$\mathrm{(i)}$ Suppose that  
\begin{equation}
(x,y)=(\lfloor m \phi  \rfloor + k, \lfloor m (\phi +1) \rfloor +u) \notin P_1   \nonumber
\end{equation}
such that
$1 \leq k \leq m+n$ and $u=0,1$. Note that $(x,y)$ is on the right side of the point
$(\lfloor m \phi \rfloor, \lfloor m (\phi +1)  \rfloor ) \in P_{0,1}$.
We prove that we can move to a position in $P_1$. In Figures \ref{graph4} or \ref{graph1}, this position $(x,y)$
 belongs to the area printed in blue.\\
We have two cases.\\
\noindent {\tt Case 1}: Suppose that $1 \leq k \leq m+u$. Then, $m+u-k \geq 0$, and from the point $(x,y)=(\lfloor  m \phi  \rfloor + k, \lfloor m ( \phi +1) \rfloor +u)$, we can move to the point $(u,v)=(\lfloor (m+u-k) \phi   \rfloor , \lfloor (m+u-k) (\phi +1)  \rfloor ) \in P_{0,1}$ by the diagonal move $M_3$, because
$y-x=m+u-k=v-u$. Then, by Lemma \ref{formptopg}, we can move to a position in $P_{1,1}$ by the diagonal move $M_3$.\\
\noindent {\tt Case 2}: Suppose that $m+u < k \leq m+n$. We have two subcases.\\
\noindent {\tt Subcase 1}: Suppose that $u=0$ and $k=m+n$. Then, by $(\ref{condition1})$,
from the position 
\begin{align}
(x,y) & =  (\lfloor  m \phi   \rfloor +m+n, \lfloor m \phi \rfloor +m) \nonumber \\
&  = (\lfloor  m (\phi + 1)  \rfloor +n, \lfloor m \phi \rfloor +m)  \nonumber \\
& = (\lfloor  n (\phi + 1)  \rfloor -1, \lfloor n \phi \rfloor -1), \nonumber 
\end{align}
we can move to 
\begin{equation}
(u,v)=(\lfloor  (n-1) (\phi + 1)  \rfloor +g(n-1), \lfloor (n-1) \phi \rfloor + g(n-1)-1) \in P_{1,2} \nonumber    
\end{equation}
 by the diagonal move $M_3$, because $x-y=n=u-v$.\\
\noindent {\tt Subcase 2}: Suppose that $u=1$ or $k<m+n$.
From the position 
\begin{equation}
(\lfloor  m \phi  \rfloor + k, \lfloor m (\phi + 1) \rfloor +u), \nonumber  
\end{equation}
we can move to 
\begin{equation}
(\lfloor (k-m-u) (\phi + 1)  \rfloor , \lfloor (k-m-u) \phi   \rfloor ) \in P_{0,2}  \nonumber  
\end{equation}
 by the diagonal move $M_3$.
Then, by Lemma \ref{formptopg}, we can move to a position in $P_{1}$ by the diagonal move $M_3$. \\
%なぜu=1としたかを説明しないといけない。
$\mathrm{(ii)}$   Suppose that 
\begin{equation} 
(x,y)=(\lfloor m \phi  \rfloor -k, \lfloor m (\phi +1) \rfloor +u) \notin P_1 \nonumber   
\end{equation} 
with $u=0,1$ and $k \in \mathbb{Z}_{\geq0}$.  If we use Example \ref{examfortwofigures} with Figure \ref{graph4}, the position $(x,y)$ belongs to the area that is on the left of the blue area. Here, we have a red square, a light blue square, and many yellow squares. We will prove that  we can move to a position in $ P_1$ by the vertical move $M_3$.\\
We have three cases.\\
\noindent {\tt Case 1}: Suppose that $k=u=0$. Then, $(x,y) \in P_{0,1}$ and by Lemma \ref{formptopg}, we can move to a position in $P_1$ by the diagonal move $M_3$.\\
\noindent {\tt Case 2}: Suppose that $k=0$ and $u=1$. If $g(m)=1$, 
\begin{equation}
(x,y)=(\lfloor m \phi  \rfloor + g(m)-1, \lfloor m (\phi + 1) \rfloor +g(m)) \in P_{1,1}.\nonumber
\end{equation}
 This contradicts the assumption that $(x,y) \notin P_1$. Hence, $g(m)=0$, and we can move from $(\lfloor m \phi  \rfloor  , \lfloor m (\phi + 1) \rfloor +1)$
 to 
\begin{align}
& (\lfloor m \phi  \rfloor -1 , \lfloor m (\phi + 1) \rfloor) \nonumber \\
   & =(\lfloor m \phi  \rfloor + g(m)-1 , \lfloor m (\phi + 1) \rfloor +g(m)) \in P_{1,1}  \nonumber 
\end{align}
 by the diagonal move.\\
\noindent {\tt Case 3}: Suppose that $k \geq 1$.  It depends on the value of the first coordinate that you can move to a position in $P_{1,1}$ or $P_{1,2}$. Hence, there are two subcases.\\
\noindent {\tt Subcase 1}: Suppose that there exists $t  \in \mathbb{Z}_{\geq0}$ such that
\begin{equation}
\lfloor m \phi  \rfloor -k = \lfloor t \phi  \rfloor +g(t)-1.\nonumber
\end{equation}
If $t < m$, then we can move by the vertical move $M_2$ to the position 
\begin{align}
& (\lfloor m \phi  \rfloor -k, \lfloor t (\phi + 1) \rfloor  + g(t)) \nonumber \\
& =(\lfloor t \phi  \rfloor +g(t)-1, \lfloor t (\phi + 1) \rfloor  + g(t)) \in P_{1,1}. \nonumber
\end{align}
If $t=m$, then $k=1$ and $g(t)=0$. Hence,
\begin{align}
(x,y) & =(\lfloor m \phi  \rfloor -1,\lfloor m (\phi+1) \rfloor +u) \nonumber \\
& = (\lfloor t \phi  \rfloor + g(t)-1,\lfloor t (\phi+1) \rfloor +g(t)+u). \nonumber
\end{align}
If $u=0$, $(x,y) \in P_{1,1}$. This contradicts the assumption of the lemma. Hence, $u=1$. Then, we can move to the position $(x,y-1)\in P_{1,1}$ by the vertical move $M_2$.\\
\noindent {\tt Subcase 2}: Suppose that there exists $t  \in \mathbb{Z}_{\geq0}$ such that
\begin{equation}
\lfloor m \phi  \rfloor -k = \lfloor t (\phi + 1)  \rfloor +g(t).\nonumber
\end{equation}
Then, we have $t<m$, and we can move by the vertical move $M_2$ to the position
\begin{align}
& (\lfloor m \phi  \rfloor -k, \lfloor t \phi \rfloor + g(t)-1) \nonumber \\
& =(\lfloor t (\phi +1)  \rfloor +g(t), \lfloor t \phi \rfloor + g(t)-1) \in P_{1,2}. \nonumber
\end{align}
$\mathrm{(iii)}$  Suppose that 
\begin{equation}
(x,y)=(\lfloor n (\phi + 1)  \rfloor  + k, \lfloor n \phi \rfloor -u) \notin P_1 \nonumber
\end{equation}
with $u=0,1$ and $k \in \mathbb{Z}_{\geq0}$. If we use Example \ref{examfortwofigures} with Figure \ref{graph4}, the position $(x,y)$ 
belongs to the area to the right of the blue area. Here, we have a red square, a light blue square,
and many green squares.
We will prove that we can move to a position in $P_1$ by the horizontal move.
We have two cases.\\
\noindent {\tt Case 1}: Suppose that $g(n)=0$ and $g(m)=1$. An example of this situation is presented in Figure \ref{graph4}.
If $k=0$ and $u=1$, 
$(x,y)=(\lfloor n (\phi + 1)  \rfloor , \lfloor n \phi \rfloor -1)=(\lfloor n (\phi + 1)  \rfloor +g(n), \lfloor n \phi \rfloor +g(n) -1) \in P_{1,2}$. Hence, we suppose that $k>0$ or $u=0$. We have two subcases.\\
\noindent {\tt Subcase 1}: Suppose that $u=0$.
Then, from the position 
\begin{equation}
(x,y)=(\lfloor n (\phi + 1)  \rfloor  + k, \lfloor n \phi \rfloor ),\nonumber    
\end{equation}
 by $(\ref{condition1})$, we can move to
\begin{align}
& (\lfloor m \phi  \rfloor , \lfloor m (\phi + 1)  \rfloor +1 ) \nonumber \\
& = (\lfloor m \phi  \rfloor +g(m)-1, \lfloor m (\phi + 1)  \rfloor +g(m)) \in P_{1,1}. \nonumber
\end{align}
\noindent {\tt Subcase 2}: Suppose that $k >0$. Then, 
from the position 
\begin{equation}
(x,y)=(\lfloor n (\phi + 1)  \rfloor  + k, \lfloor n \phi \rfloor -1 ),\nonumber
\end{equation}
we can move to
\begin{equation}
(\lfloor n (\phi + 1)  \rfloor , \lfloor n \phi  \rfloor -1) \in P_{1,2}.\nonumber
\end{equation}
\noindent {\tt Case 2}: If $g(n)=1$, then $g(m)=0$.  An example of this situation is presented in Figure \ref{graph1}. We have two subcases.\\
\noindent {\tt Subcase 1}:  Suppose that $u=0$.
If $k \geq 2$,
from the position $(x,y)=(\lfloor n (\phi + 1)  \rfloor + k, \lfloor n \phi \rfloor )$, we can move to
\begin{equation}
(\lfloor n (\phi + 1)  \rfloor  +1, \lfloor n \phi  \rfloor)=(\lfloor n (\phi + 1)  \rfloor +g(n), \lfloor n \phi  \rfloor +g(n)-1) \in P_{1,2}\nonumber
\end{equation}
by the horizontal move $M_1$.

If $k=1$,  
\begin{align}
(x,y) & =(\lfloor n (\phi + 1)  \rfloor  +1, \lfloor n \phi  \rfloor) \nonumber \\
&  =(\lfloor n (\phi + 1)  \rfloor  +g(n), \lfloor n \phi  \rfloor +g(n)-1) \in P_{1,2}, \nonumber
\end{align}
and this contradicts the assumption of the lemma.

If $k=0$, $(x,y) \in P_{0,2}$. Then, by Lemma \ref{formptopg}, we can move to a position in $P_1$.
\noindent {\tt Subcase 2}:  Suppose that $u=1$.
By $(\ref{condition1})$, $\lfloor n \phi  \rfloor -1 =\lfloor m (\phi+1)  \rfloor$. Hence, 
from the position $(x,y)=(\lfloor n (\phi + 1)  \rfloor  + k, \lfloor n \phi \rfloor - 1)$, we can move to
\begin{equation}
(\lfloor m \phi  \rfloor -1, \lfloor m (\phi + 1)  \rfloor  = (\lfloor m \phi  \rfloor +g(m)-1, \lfloor m (\phi + 1)  \rfloor +g(m)) \in P_{1,1}\nonumber
\end{equation}
by the horizontal move $M_1$.
\end{proof}  

\begin{example}\label{examfortwofigures2}
In Figures~\ref{graph2} and \ref{graph0}, the positions of $P_1$ are printed in light blue. 
Positions belonging to $P_0$ are printed in red, and from one of these positions, by Lemma \ref{formptopg}, we can move to a position in $P_1$.
From one of the positions denoted by yellow squares, we can reach a position in $P_1$ by the vertical move $M_2$.
From one of the positions denoted by blue squares, we can pass through a position in $P_0$, and reach a position in $P_1$ by the diagonal move $M_3$, and from one of the positions denoted by green squares, we can reach a position in $P_1$ by the horizontal move $M_2$.

These moves are examples of the methods used in the proof of Lemma  \ref{fromntoplemma2b}.
\end{example}

\begin{figure}[H]
\includegraphics[height=4.5cm]{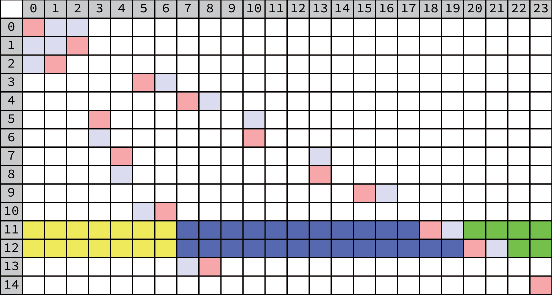}
\caption{ \ }\label{graph2}
\end{figure}

\begin{figure}[H]
\includegraphics[height=3.3cm]{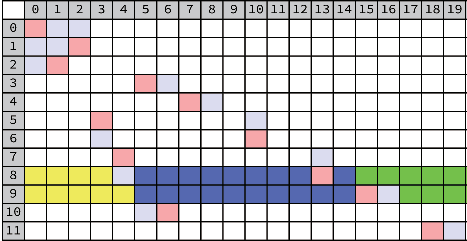}
\caption{ \ }\label{graph0}
\end{figure}

\begin{lemma}\label{fromntoplemma2b}
Let $n,m \in \mathbb{N}$ such that 
\begin{equation}
\lfloor n \phi \rfloor = \lfloor (n-1) \phi \rfloor +1 =\lfloor m (\phi + 1) \rfloor  +2. \label{condition2}
\end{equation}
Suppose that $(x,y) \notin P_1$ and $y= \lfloor n \phi \rfloor $ or 
$y= \lfloor n \phi \rfloor -1$.
Then $\textit{move}(x,y) \cap P_1 \ne \emptyset.$
\end{lemma}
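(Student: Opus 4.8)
The plan is to mirror the structure of the proof of Lemma~\ref{fromntoplemma1b}, case-splitting on the geometric position of $(x,y)$ relative to the two $\mathcal{P}$-positions of $P_0$ that bracket the row $y\in\{\lfloor n\phi\rfloor,\lfloor n\phi\rfloor-1\}$. Under hypothesis~$(\ref{condition2})$, the value $\lfloor m(\phi+1)\rfloor$ sits \emph{between} $\lfloor(n-1)\phi\rfloor$ and $\lfloor n\phi\rfloor$ (this is exactly case~$(ii)$ of Lemma~\ref{twocases}, and it is the configuration analyzed in Lemma~\ref{fromntoplemma2} and illustrated in Example~\ref{examfortwofigures2}). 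First I would record, from Definition~\ref{newfunction}, that $g(n)=1$ here and $g(n-1)=1-g(m)$, so that $b_1(n)=\lfloor n\phi\rfloor$ and the precise identities of Lemma~\ref{fromntoplemma2} tell us which of $\lfloor n\phi\rfloor-1$, $\lfloor(n-1)\phi\rfloor-1$ lies in $B_1$ and which lies in $B_2$. This pins down exactly which points in the two relevant rows are in $P_1$.

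Next I would partition the candidate positions $(x,y)$ into four regions, just as in Lemma~\ref{fromntoplemma1b}: (a) positions strictly between the two bracketing $P_0$-points whose coordinate difference $y-x$ matches a valid diagonal slope into $P_{0,1}$ or $P_{0,2}$ — for these, move diagonally ($M_3$) onto a point of $P_0$ and then invoke Lemma~\ref{formptopg} to continue diagonally into $P_1$; (b) positions directly above a $P_1$-point or $P_0$-point in the same column, handled by a vertical move $M_2$ — here one uses Lemma~\ref{b1b2lemma}, in particular $B_1\cup B_2=\mathbb{Z}_{\geq0}$, to guarantee that the target $x$-coordinate is realized as some $b_1(t)$ or $b_2(t)$ with $t<n$, and then a short sub-case analysis ($t<n$ versus the boundary $t=n$, and the two values of $u$) shows the vertical drop lands in $P_{1,1}$ or $P_{1,2}$; (c) positions to the right of the outer $P_0$-point, handled by a horizontal move $M_2$ into $P_{1,2}$ (or, when $k$ is small, into a $P_0$-point, then Lemma~\ref{formptopg}); (d) positions lying exactly on a $P_0$-point, handled immediately by Lemma~\ref{formptopg}. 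Each case is a routine floor-function computation using Lemma~\ref{lemmaphi12} and the Beatty-sequence partition of Lemma~\ref{rayleigh}, together with the identity $\lfloor n\phi\rfloor-1=\lfloor m(\phi+1)\rfloor+1$ from $(\ref{condition2})$ to translate coordinates between the $n$-indexed and $m$-indexed descriptions.

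The genuine obstacle, as in the companion lemma, is the vertical-move region~(b): one must verify that whenever $(x,y)\notin P_1$ with $y$ in the given row, the column coordinate $x$ is \emph{strictly below} the boundary cell, so that a legal downward move exists, and that the cell it drops onto is indeed in $P_1$ rather than being skipped over — this is where the delicate interplay between $g(m)$, $g(n-1)$, and the two sub-values of $u$ enters, and where one must separately treat the degenerate case in which the relevant $b_1(t)$ or $b_2(t)$ has $t=n$ (forcing $k=1$ and determining $g$). I would handle this by the same device used in Subcases~1 and~2 of Case~3 of Lemma~\ref{fromntoplemma1b}: split according to whether $x\in B_1$ or $x\in B_2$ (exhaustive by Lemma~\ref{b1b2lemma}), extract the index $t$, and check $t<n$ using the monotonicity Inequalities~$(\ref{b1b1big})$ and~$(\ref{b1b1big2})$ together with the fact that $x$ lies to the left of the outer bracketing point. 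The boundary case $x\in P_0$ is absorbed by Lemma~\ref{formptopg}. Once all four regions are exhausted — and the hypothesis $y\in\{\lfloor n\phi\rfloor,\lfloor n\phi\rfloor-1\}$ together with $(\ref{condition2})$ guarantees every $(x,y)\notin P_1$ falls into exactly one of them — the conclusion $\mathit{move}(x,y)\cap P_1\neq\emptyset$ follows.
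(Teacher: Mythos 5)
Your proposal follows essentially the same route as the paper's proof: establish $g(n)=1$ and $g(n-1)=1-g(m)$, then split by region (the band between the bracketing $P_0$-points handled by a diagonal move onto $P_0$ followed by Lemma~\ref{formptopg}, the left region handled by a vertical move using the partition $B_1\cup B_2=\mathbb{Z}_{\geq0}$ from Lemma~\ref{b1b2lemma} with a sub-case on the extracted index $t$, and the right region handled by a horizontal move), with the same boundary sub-cases. The only slip is that in the vertical-move region the boundary comparison should be $t<m$ versus $t=m$ (since there $x=\lfloor m\phi\rfloor-k$), not $t<n$ versus $t=n$; this does not affect the soundness of the approach.
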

\begin{proof}
By Definition \ref{newfunction} and Equation (\ref{condition2}), we obtain
\begin{equation}
g(n) = 1 \label{gn1valuse} 
\end{equation}
and 
\begin{equation}
g(n-1)=1-g(m). \label{gn1eq1gm}  
\end{equation}
$\mathrm{(i)}$ Suppose that  
\begin{equation}
(x,y)=(\lfloor  m \phi  \rfloor + k, \lfloor  m (\phi + 1) \rfloor +u) \notin P_1 \nonumber   
\end{equation}
such that $1 \leq k \leq m+n+1$ and $u=1,2$. If we use Example \ref{examfortwofigures2} with Figures \ref{graph2} and \ref{graph0}, the point $(x,y)$ lies in the blue area. We prove that 
we can move to a position in $P_1$. We have three cases.\\
\noindent {\tt Case 1}: Suppose that $u=2$ and $k = 1$. From $(x,y)=
(\lfloor  m \phi  \rfloor+1,\lfloor  m (\phi +1)  \rfloor + 2)$, we can move to 
$(x,y)=(\lfloor  m \phi  \rfloor + g(m)-1, \lfloor  m (\phi + 1) \rfloor +g(m)) \in P_{1,1}$ by the diagonal move $M_3$.\\
\noindent {\tt Case 2}: Suppose that $1 \leq k \leq m+u$ and $k \geq u$. Then, from a position  $(\lfloor  m \phi  \rfloor + k, \lfloor  m (\phi +1) \rfloor +u)$, 
we can move to $(\lfloor (m+u-k) \phi   \rfloor , \lfloor (m+u-k) (\phi + 1)  \rfloor  \in P_0$ by the diagonal move $M_3$. Then, by Lemma \ref{formptopg}, we can move to a position in $P_1$ by the diagonal move $M_3$.\\
\noindent {\tt Case 3}: Suppose that $m+u < k \leq m+n+1$. We have three subcases.\\
\noindent {\tt Subcase 1}: If $k=m+n+1$ and $u=1$.
Then, by Equation $(\ref{condition2})$ 
\begin{align}
(x,y) & =(\lfloor  m \phi  \rfloor + m+n+1, \lfloor  m (\phi +1)  \rfloor +1) \nonumber \\
&  =(\lfloor  (n-1) (\phi +1)  \rfloor +1, \lfloor  (n-1) \phi  \rfloor ). \nonumber
\end{align}
Then, if $g(n-1)=1$,
\begin{equation}
(x,y) =(\lfloor  (n-1) (\phi +1)  \rfloor +g(n-1), 
\lfloor  (n-1) \phi  \rfloor +g(n-1)-1) \in P_1, \nonumber
\end{equation}
and this contradicts the assumption of the lemma.

If $g(n-1)=0$, from $(x,y)$, we can move to the position
\begin{align}
& (\lfloor  (n-1) (\phi +1)  \rfloor, \lfloor  (n-1) \phi  \rfloor -1) \nonumber \\
& = (\lfloor  (n-1) (\phi +1)  \rfloor +g(n-1), \lfloor  (n-1) \phi  \rfloor +g(n-1)-1) \in P_1 \nonumber
\end{align}
by the diagonal move $M_3$.\\
\noindent {\tt Subcase 2}: Suppose that $m+u < k<m+n$ or $u=2$ and $m+u < k \leq m+n+1$.
Then, from a position $(\lfloor  m \phi  \rfloor + k, \lfloor  m (\phi + 1) \rfloor  +u)$, 
we can move to $(\lfloor (k-m-u) (\phi + 1)   \rfloor  , \lfloor (k-m-u) \phi   \rfloor ) \in P_0$ by the  diagonal move.
Then, by Lemma \ref{formptopg}, we can move to a position in $P_1$\\
\noindent {\tt Subcase 3}: If $k=m+n$ and $u=1$, by Equation $(\ref{condition2})$
\begin{align}
(x,y)& =(\lfloor  m \phi  \rfloor + m + n, \lfloor  m (\phi +1)  \rfloor +1) \nonumber \\
& = (\lfloor  (n-1) (\phi +1) \rfloor, \lfloor  (n-1) \phi  \rfloor) \in P_0. \nonumber
\end{align}
Then, by Lemma \ref{formptopg}, we can move to a position in $P_1$.\\
$\mathrm{(ii)}$  Suppose that 
\begin{equation}
(x,y)=(\lfloor m \phi  \rfloor -k, \lfloor m (\phi + 1) \rfloor +u) \notin P_1 \nonumber   
\end{equation}
with $u=1,2$ and $k \in \mathbb{Z}_{\geq0}$. If we use the Example \ref{examfortwofigures2} with Figures \ref{graph2} and \ref{graph0}, the point $(x,y)$ lies in the yellow area. We prove that 
if $(x,y) \notin P_1$ we can move to a position in 
$P_1$ by the vertical move.  It depends on the value of the first coordinate that you can move to a position in $P_{1,1}$ or $P_{1,2}$. Hence, there are two cases.\\
\noindent {\tt Case 1}: Suppose that there exists $t  \in \mathbb{Z}_{\geq0}$ such that
\begin{equation}
\lfloor m \phi  \rfloor -k = \lfloor t \phi  \rfloor +g(t)-1.\nonumber
\end{equation}
Then, there are three subcases.\\
\noindent {\tt Subcase 1}:
If $t < m$, then we can move to the position 
\begin{align}
& (\lfloor m \phi  \rfloor -k, \lfloor t (\phi + 1) \rfloor  + g(t)) \nonumber \\
& =(\lfloor t \phi  \rfloor +g(t)-1, \lfloor t (\phi + 1) \rfloor  + g(t)) \in P_{1,1} \nonumber
\end{align}
by the vertical move $M_2$.\\
\noindent {\tt Subcase 2}:
If $t=m$ and $k=1$, then $g(t)=0$. Hence,
\begin{align}
(x,y) & =(\lfloor m \phi  \rfloor -1,\lfloor m (\phi+1) \rfloor +u) \nonumber \\
& = (\lfloor t \phi  \rfloor + g(t)-1,\lfloor t (\phi+1) \rfloor +g(t)+u). \nonumber
\end{align}
Then, we can move to the position $(x,y-u)\in P_{1,1}$.\\
\noindent {\tt Subcase 3}:
Suppose that $t=m$ and $k=0$, then $g(t)=1$. Hence, 
\begin{equation}
(x,y)= (\lfloor t \phi  \rfloor + g(t)-1,\lfloor t (\phi+1) \rfloor +g(t)+u-1) \nonumber
\end{equation}
 If $u=1$, then $(x,y) \in P_1$. If $u=2$, we can move to the position $(x,y-1)\in P_{1,1}$ by the vertical move $M_2$.\\
\noindent {\tt Case 2}: Suppose that there exists $t  \in \mathbb{Z}_{\geq0}$ such that
\begin{equation}
\lfloor m \phi  \rfloor -k = \lfloor t (\phi + 1)  \rfloor +g(t).\nonumber
\end{equation}
Then, we have  $t<m$ and we can move by the vertical move $M_2$ to the position
\begin{align}
&  (\lfloor m \phi  \rfloor -k, \lfloor t \phi \rfloor + g(t)-1) \nonumber \\
& =(\lfloor t \phi  \rfloor +g(t)-1, \lfloor t \phi \rfloor + g(t)-1) \in  P_{1,2}. \nonumber
\end{align}
$\mathrm{(iii)}$  Suppose that 
\begin{equation}
(x,y)=(\lfloor n (\phi +1) \rfloor + k, \lfloor n \phi \rfloor -u) \notin P_1 \nonumber   
\end{equation}
with $u=0,1$ and $k \in \mathbb{Z}_{\geq0}$. We prove that we can move to a position in $P_1$ by the horizontal move $M_1$. If we use the Example \ref{examfortwofigures2} with Figures \ref{graph2} and \ref{graph0}, the point $(x,y)$ lies in the green area.  
By $(\ref{gn1valuse})$,
\begin{equation}
(\lfloor n (\phi + 1)  \rfloor + 1,\lfloor n \phi  \rfloor) =(\lfloor n (\phi + 1)  \rfloor  + g(n),\lfloor n \phi  \rfloor + g(n)-1) \in P_{1,2}.\label{ku0case}
\end{equation}
First, we consider the case that $u=0$. If $k=0$, then   $(x,y) \in P_{0,2}$, and by Lemma \ref{formptopg},
we can move to a position in $P_{1,2}$. If $k=1$, then by $(\ref{ku0case})$,  $(x,y) \in P_1$. This contradicts the assumption of the lemma.
If $k>1$,  we can move to $(\lfloor n (\phi + 1)  \rfloor + 1,\lfloor n \phi  \rfloor) \in P_{1,2}.$
Next, we assume that $u=1$. We have two cases.\\
\noindent {\tt Case 1}: Suppose $g(n-1)=1$. An example of this case is Figure \ref{graph2}.
From the position $(\lfloor n (\phi + 1)  \rfloor  + k, \lfloor n \phi  \rfloor -1)$, by Equation $(\ref{condition2})$ we can move 
by the horizontal move $M_1$ to
\begin{align}
&  (\lfloor n (\phi +1)  \rfloor -1, \lfloor n \phi  \rfloor -1 ) \nonumber \\
& = (\lfloor (n-1) (\phi +1)  \rfloor + 1, \lfloor (n-1) \phi  \rfloor )  \nonumber \\
& = (\lfloor (n-1) (\phi +1)  \rfloor + g(n-1), \lfloor (n-1) \phi  \rfloor +g(n-1)-1) \in P_{1,2}. \nonumber
\end{align}
\noindent {\tt Case 2}: Suppose $g(n-1)=0$. An example of this case is Figure \ref{graph0}.
Then by $(\ref{gn1eq1gm})$, $g(m)=1$.
From the position $(\lfloor n (\phi + 1)  \rfloor + k, \lfloor n \phi  \rfloor -1)$, by Equation $(\ref{condition2})$ we can move by the horizontal move $M_1$ to 
\begin{align}
&  (\lfloor n \phi \rfloor -m-2, \lfloor n \phi  \rfloor -1) \nonumber \\
& =(\lfloor m \phi  \rfloor , \lfloor m (\phi + 1)  \rfloor +1 )  \nonumber \\
& =(\lfloor m \phi  \rfloor + g(m)-1, \lfloor m (\phi + 1)  \rfloor +g(m) ) \in P_{1,1}. \nonumber
\end{align}
\end{proof} 

%If you can go to P0, then can go to P1. We can omit this procedure to make the paper shorter.

\begin{lemma}\label{fromntoplemma}
From any position  $(x,y) \notin P_1$, we can move to a position in $P_1$.
\end{lemma}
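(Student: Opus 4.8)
The plan is to derive Lemma~\ref{fromntoplemma} from the two technical lemmas \ref{fromntoplemma1b} and \ref{fromntoplemma2b}, turning this statement into a short gluing argument. Those lemmas already establish $move(x,y)\cap P_1\neq\emptyset$ for every $(x,y)\notin P_1$ whose second coordinate equals $\lfloor n\phi\rfloor$ or $\lfloor n\phi\rfloor-1$, under the two possible arithmetic patterns of the floor sequences near $n$, and they place no restriction whatsoever on the first coordinate $x$. So all that is left is to check that every position not in $P_1$ has that shape for a suitable $n$, together with a few degenerate cases where the natural choice of $n$ would be too small.

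First I would dispose of the cases $y\le 1$. A short computation from $g(0)=1$, $g(1)=0$ and Definition~\ref{definitionofb1b2} shows that the whole terminal set $\{(x,y):x+y\le 2\}$ is contained in $P_1$, and, more precisely, that $(x,0)\in P_1$ only for $x\in\{0,1,2\}$ and $(x,1)\in P_1$ only for $x\in\{0,1\}$ (for $n\ge 2$ one has $b_1(n)\ge \lfloor n\phi\rfloor-1\ge 2$ and $b_2(n)\ge \lfloor n(\phi+1)\rfloor\ge 5$, so no further small values occur). Consequently a position $(x,y)\notin P_1$ with $y=0$ must have $x\ge 3$ and can move horizontally to $(2,0)\in P_1$, and one with $y=1$ must have $x\ge 2$ and can move horizontally to $(1,1)\in P_1$.

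Now suppose $y\ge 2$. By Lemma~\ref{nn1eqx} there is $n$ with $y=\lfloor n\phi\rfloor$ or $y=\lfloor n\phi\rfloor-1$, and since $\lfloor 0\cdot\phi\rfloor=0$ and $\lfloor 1\cdot\phi\rfloor=1$ the hypothesis $y\ge 2$ forces $n\ge 2$, so $n,n-1\in\mathbb{N}$. Apply Lemma~\ref{twocases} to this $n$. In case $(i)$ one gets $m\in\mathbb{Z}_{\geq0}$ with $\lfloor m(\phi+1)\rfloor,\lfloor(n-1)\phi\rfloor,\lfloor n\phi\rfloor$ consecutive, i.e.\ $\lfloor n\phi\rfloor=\lfloor(n-1)\phi\rfloor+1=\lfloor m(\phi+1)\rfloor+2$, which is exactly hypothesis $(\ref{condition2})$, so Lemma~\ref{fromntoplemma2b} yields $move(x,y)\cap P_1\neq\emptyset$. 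In case $(ii)$ one gets $m$ with $\lfloor(n-1)\phi\rfloor,\lfloor m(\phi+1)\rfloor,\lfloor n\phi\rfloor$ consecutive, hence $\lfloor n\phi\rfloor=\lfloor m(\phi+1)\rfloor+1$, which is hypothesis $(\ref{condition1})$, so Lemma~\ref{fromntoplemma1b} applies. Either way the proof is complete, since neither lemma constrains $x$.

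I do not expect any genuine obstacle: all of the substantive case analysis — choosing between a diagonal move that passes through $P_0$ (via Lemma~\ref{formptopg}), a vertical move on the left-hand side, and a horizontal move on the right-hand side — has already been done inside Lemmas \ref{fromntoplemma1b} and \ref{fromntoplemma2b}. The only points that need a little attention are (a) confirming $n\ge 2$, so that the hypotheses requiring $n,m\in\mathbb{N}$ in Lemmas \ref{twocases}, \ref{fromntoplemma1b}, and \ref{fromntoplemma2b} really do hold (Lemma~\ref{twocases} does fail at $n=1$, where $\lfloor(n-2)\phi\rfloor$ is negative), and (b) checking that the $m$ supplied by Lemma~\ref{twocases} is the very $m$ occurring in the hypothesis of the lemma one then invokes, which is immediate from the way the consecutiveness conditions rewrite as $(\ref{condition1})$ and $(\ref{condition2})$.
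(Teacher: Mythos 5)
Your proposal is correct and follows essentially the same route as the paper: reduce to Lemmas~\ref{fromntoplemma1b} and \ref{fromntoplemma2b} via Lemma~\ref{nn1eqx} and the dichotomy of Lemma~\ref{twocases}, with the same matching of cases to conditions $(\ref{condition1})$ and $(\ref{condition2})$. Your explicit treatment of the small positions $y\le 1$ and the verification that $n\ge 2$ (so that $n,m\in\mathbb{N}$ as the technical lemmas require) is a point of care that the paper's own proof omits.
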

\begin{proof}
Let $y \in \mathbb{Z}_{\geq0}$. We suppose that $(x,y) \notin P_1$, and prove that 
 $\textit{move}(x,y) \cap P_1 \ne \emptyset$. By Lemma \ref{nn1eqx},
there exists $n \in \mathbb{Z}_{\geq0}$ such that $y=  \lfloor n \phi  \rfloor$ or $y=  \lfloor n \phi  \rfloor -1$.
Since $\{(u,v):u+v \leq 2\} \subset P_1$, $\textit{move}(x,y) \cap P_1 \ne \emptyset$ for 
any $(x,y) \notin P_1$ with $y \leq 2$. Hence, we assume that $n \geq 2$.
By Lemma \ref{twocases}, we have two cases.\\
\noindent {\tt Case 1}:  Suppose that there exists $m \in \mathbb{Z}_{\geq0}$ such that 
$\lfloor (n-1) \phi \rfloor$, $\lfloor m (\phi + 1) \rfloor $, $\lfloor n \phi \rfloor$ are three consecutive numbers. Then, by Lemma \ref{fromntoplemma1b}, $\textit{move}(x,y) \cap P_1 \ne \emptyset$.\\
\noindent {\tt Case 2}: Suppose that there exists $m \in \mathbb{Z}_{\geq0}$ such that 
$\lfloor m (\phi + 1) \rfloor $, $\lfloor (n-1) \phi \rfloor$, $\lfloor n \phi \rfloor$ are three consecutive numbers. Then, by Lemma \ref{fromntoplemma2b}, $\textit{move}(x,y) \cap P_1 \ne \emptyset$.
\end{proof}

\begin{theorem}\label{theoremforwythoffandvar}
The set of $\mathcal{P}$-positions of the variant of Wythoff's game in Definition \ref{wythoffvar} is 
$P_1$.
\end{theorem}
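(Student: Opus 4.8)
The plan is to apply the standard characterization of $\mathcal{P}$-positions. Since every legal move in the game of Definition~\ref{wythoffvar} strictly decreases $x+y$, the game is finite, so a set $S$ of positions equals the set of $\mathcal{P}$-positions if and only if: (a) every terminal position (every $(x,y)$ with $x+y\le 2$) lies in $S$; (b) from every non-terminal position in $S$, every move leaves $S$; and (c) from every position not in $S$, some move enters $S$. I would verify (a), (b), (c) for $S=P_1$ and then conclude by induction on $x+y$.

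For (a): the six terminal positions $(0,0),(1,0),(0,1),(1,1),(2,0),(0,2)$ all lie in $P_1$. Indeed $(0,0)$ and $(1,1)$ are listed explicitly in Definition~\ref{newfunction}; taking $n=0$ (so $g(0)=1$, $b_1(0)=0$, $b_2(0)=1$) gives $(0,1)\in P_{1,1}$ and $(1,0)\in P_{1,2}$, and taking $n=1$ (so $g(1)=0$, $b_1(1)=0$, $b_2(1)=2$) gives $(0,2)\in P_{1,1}$ and $(2,0)\in P_{1,2}$. Since moving into the terminal set ends the game with a win for the mover, each terminal position is a $\mathcal{P}$-position; this is also the base case $x+y\le 2$ of the induction. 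For (b): a non-terminal element of $P_1$ is a point of $P_{1,1}\cup P_{1,2}$ with $x+y\ge 3$ (the points coming from $n=0,1$, together with $(0,0)$ and $(1,1)$, exhaust $P_1\cap\{x+y\le 2\}$, because $b_1$ is strictly increasing by Lemma~\ref{b1b2lemma}), and Lemma~\ref{frompnotp} shows that no move from such a point reaches $P_1$; in particular no move reaches the terminal set, so the game does not end on that move, and by the induction hypothesis every option is an $\mathcal{N}$-position. Since $x+y\ge 3$ there is at least one legal move, so the point is a $\mathcal{P}$-position. For (c): if $(x,y)\notin P_1$, then by Lemma~\ref{fromntoplemma} some move leads into $P_1$; that target is either terminal (an immediate win) or, by the induction hypothesis together with (b), a $\mathcal{P}$-position, so $(x,y)$ is an $\mathcal{N}$-position. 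Combining (a)--(c) and inducting on $x+y$ yields that a position is a $\mathcal{P}$-position if and only if it lies in $P_1$, which is the theorem.

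The substantive content is entirely contained in Lemmas~\ref{frompnotp} and \ref{fromntoplemma}, which are already available, so the remaining work is essentially bookkeeping around the terminal-set convention, and I do not expect a genuine obstacle. The only points needing a little care are: confirming $\{(x,y):x+y\le2\}\subseteq P_1$, so that ``can reach $P_1$'' and ``wins by reaching the terminal set'' are compatible; observing that the two distinguished points $(0,0),(1,1)$ of $P_1$ are themselves terminal and hence have no options, so (b) is vacuous (not violated) for them, even though from $(1,1)$ one could nominally ``move'' to $(1,0),(0,1),(0,0)\in P_1$; and, for full rigor at positions with very small coordinate sum not handled uniformly by the $n\ge1$ hypotheses of the auxiliary lemmas, a direct check that such $(x,y)\notin P_1$ still have a move into $P_1$ (e.g.\ $(x,0)$ with $x\ge3$ moves horizontally to $(2,0)$). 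These are all immediate.
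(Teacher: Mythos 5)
Your proposal is correct and follows essentially the same route as the paper, whose entire proof is to cite Lemmas~\ref{frompnotp} and \ref{fromntoplemma}; you simply make explicit the standard induction on $x+y$ and the bookkeeping around the terminal set (including the check that all six terminal positions lie in $P_1$ and that $(0,0),(1,1)$ have no options), which the paper leaves implicit.
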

\begin{proof}
This theorem follows directly from Lemmas \ref{frompnotp} and \ref{fromntoplemma}.
\end{proof}

\section{The relation between our sequence and Hofstadter's G-sequence}
D. Hofstadter defined the following Hofstadter's G sequence in page 137 of \cite{escherbach}. In this section, we redefine the function $g$ in Definition 
\ref{newfunction} using  Hofstadter's G sequence.
\begin{definition}
The Hofstadter G sequence is defined as follows:
\begin{align}
& h(0)=0, \nonumber \\
& h(n)=n-h(h(n-1)) \text{ for } n \in \mathbb{N}. \nonumber 
\end{align}
\end{definition}

\begin{theorem}[\cite{curiousse}, \cite{strange}]\label{theoremsust}
Let $h$ be the Hofstadter's G-sequence. Then,
$h(n) = \lfloor \frac{n+1}{\phi}  \rfloor$.
\end{theorem}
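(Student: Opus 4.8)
The plan is to verify that the explicit formula $f(n):=\lfloor (n+1)/\phi\rfloor$ satisfies the two relations defining Hofstadter's $G$-sequence, and then appeal to uniqueness. First I would observe that the recurrence $h(0)=0$, $h(n)=n-h(h(n-1))$ really does determine a unique sequence: a short strong induction shows that for every $j$ one has $0\le h(j)\le j$, so in particular $h(n-1)\le n-1<n$ and the nested term $h(h(n-1))$ refers only to previously determined values, whence $h(n)$ is forced. Since $\phi>1$ gives $f(n)=\lfloor (n+1)/\phi\rfloor<n+1$, hence $f(n-1)<n$, the same unwinding applies to $f$, so it suffices to check $f(0)=0$ and $f(n)=n-f(f(n-1))$ for $n\ge 1$, and then conclude $f=h$.

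The initial value is immediate from $1<\phi<2$: $f(0)=\lfloor 1/\phi\rfloor=0$. The substance is the identity $f(f(n-1))=n-f(n)$. Put $m=n$ and $p:=f(n-1)=\lfloor m/\phi\rfloor$, and write $m/\phi=p+\theta$ with $0<\theta<1$ (the value $\theta=0$ is excluded because $\phi$ is irrational). The one algebraic fact I would use repeatedly is $\phi=1+1/\phi$, which gives $m\phi=m+m/\phi=m+p+\theta$, so that $\lfloor m\phi\rfloor=m+p$ and the fractional part of $m\phi$ also equals $\theta$. Expanding $(p+1)/\phi=(p+1)(\phi-1)$ and eliminating $p$ via $p=m\phi-m-\theta$ yields the clean expression
\[
\tfrac{p+1}{\phi}=(m-p-1)+\phi(1-\theta),\qquad\text{while}\qquad \tfrac{n+1}{\phi}=p+\bigl(\theta+\phi-1\bigr).
\]
Now split on whether $\theta<2-\phi=1/\phi^{2}$ or $\theta>1/\phi^{2}$ (equality is impossible, since $\{m/\phi\}=1/\phi^{2}$ would force $m+1=(p+1)\phi$). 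In the first case $\phi(1-\theta)\in(1,2)$, so $\lfloor (p+1)/\phi\rfloor=m-p$, and $\theta+\phi-1\in(0,1)$, so $f(n)=p$; hence both sides equal $m-p$. In the second case $\phi(1-\theta)\in(0,1)$, so $\lfloor (p+1)/\phi\rfloor=m-p-1$, and $\theta+\phi-1\in(1,2)$, so $f(n)=p+1$; hence both sides equal $m-p-1$. This proves $f(n)=n-f(f(n-1))$, and with the uniqueness above we obtain $f=h$, i.e.\ $h(n)=\lfloor (n+1)/\phi\rfloor$.

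I expect the main obstacle to be purely the bookkeeping in the two-case split: keeping track of the small additive constants so that $(p+1)/\phi$ and $(n+1)/\phi$ land on the intended integer in each branch, and carefully justifying that the boundary value $\theta=1/\phi^{2}$ never occurs (which is exactly where irrationality of $\phi$ enters, and is what makes the two strict cases exhaustive). None of the estimates are deep, but they must be carried out without slack, since the statement is an exact equality of integers. As an alternative one may simply cite \cite{curiousse} and \cite{strange} for this classical closed form of Hofstadter's $G$-sequence.
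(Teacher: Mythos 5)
Your argument is correct, and it goes beyond what the paper does: the paper states this closed form as a known result and simply cites \cite{curiousse} and \cite{strange}, offering no proof of its own. Your proposal supplies a complete, self-contained verification. I checked the details: the uniqueness step is sound (strong induction gives $0\le h(j)\le j$, so the nested recursion is well-founded, and the same bound holds for $f(n)=\lfloor (n+1)/\phi\rfloor$ since $\phi>1$); the algebraic identity $\tfrac{p+1}{\phi}=(m-p-1)+\phi(1-\theta)$ follows correctly from $1/\phi=\phi-1$ and $p=m\phi-m-\theta$; the two cases $\theta<2-\phi$ and $\theta>2-\phi$ do give $\lfloor(p+1)/\phi\rfloor=m-p$ with $f(n)=p$, respectively $m-p-1$ with $f(n)=p+1$, so both branches confirm $f(f(n-1))=n-f(n)$; and the excluded boundary $\theta=1/\phi^{2}$ would force $m+1=(p+1)\phi$, impossible by irrationality of $\phi$. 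The only stylistic remark is that, since the paper treats this as a quoted classical fact, the citation alone would have sufficed in context; but your direct verification is a legitimate and arguably preferable alternative, as it makes the paper's later use of $h$ (in Lemma \ref{lemmaforgs} and Corollary \ref{relationhof}) independent of external sources.
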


\begin{lemma}\label{lemmaforgs}
For $n,m \in \mathbb{Z}_{\geq0}$, if 
\begin{equation}
\lfloor n \phi  \rfloor -2 \leq \lfloor m (\phi+1) \rfloor  \leq \lfloor n \phi \rfloor -1,\label{mphiine}
\end{equation}
then
$m = \lfloor \frac{n}{\phi}  \rfloor$.
\end{lemma}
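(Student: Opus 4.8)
The plan is to translate everything into statements about the reals $n\phi$ and $n/\phi$ and to exploit the identity $\phi+1=\phi^{2}$ (equivalently $1/\phi=\phi-1$). Write $N=\lfloor n\phi\rfloor$ and $\alpha=\{n\phi\}\in[0,1)$. If $n=0$, then $(\ref{mphiine})$ would force $\lfloor m(\phi+1)\rfloor\in\{-2,-1\}$, which is impossible for $m\ge 0$, so the statement is vacuous; hence I assume $n\ge 1$. Then $n\phi$ and $n/\phi=n(\phi-1)=n\phi-n$ are irrational, and since $n\in\mathbb{Z}$ the target index is simply $k:=\lfloor n/\phi\rfloor=\lfloor n\phi-n\rfloor=N-n$. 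It now suffices to prove the two inequalities $m\le k$ and $m\ge k$.

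For the upper bound: from $\lfloor m(\phi+1)\rfloor\le N-1$ and the elementary fact $x<\lfloor x\rfloor+1$ one gets $m(\phi+1)<N\le n\phi$, hence $m<n\phi/(\phi+1)=n/\phi$; since $n/\phi$ is irrational, $m\le\lfloor n/\phi\rfloor=k$. For the lower bound I would evaluate $(k-1)(\phi+1)$ exactly. The only nonroutine input is $N\phi=(n\phi-\alpha)\phi=n\phi^{2}-\alpha\phi=n\phi+n-\alpha\phi$ (this is where $\phi^{2}=\phi+1$ enters); a short computation with $k=N-n$ then gives
\begin{equation*}
(k-1)(\phi+1)=(N-n-1)(\phi+1)=N-1-\phi(1+\alpha).
\end{equation*}
Since $\alpha\ge 0$ and $\phi>1$ we have $\phi(1+\alpha)>1$, so $(k-1)(\phi+1)<N-2$ and therefore $\lfloor(k-1)(\phi+1)\rfloor\le N-3$. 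If $k=0$ the upper bound already forces $m=0=k$. If $k\ge 1$ and $m\le k-1$, then, $t\mapsto\lfloor t(\phi+1)\rfloor$ being nondecreasing (its consecutive differences are $2$ or $3$ by Lemma~\ref{lemmaphi12}(iii)), we would get $\lfloor m(\phi+1)\rfloor\le\lfloor(k-1)(\phi+1)\rfloor\le N-3<N-2$, contradicting the hypothesis $\lfloor m(\phi+1)\rfloor\ge N-2$. Hence $m\ge k$, and combining with $m\le k$ yields $m=k=\lfloor n/\phi\rfloor$.

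The argument is essentially bookkeeping with the golden ratio, and I do not anticipate a genuine obstacle. The one place that rewards care is the exact identity $(k-1)(\phi+1)=N-1-\phi(1+\alpha)$: it is cleanest to carry the fractional part $\alpha$ along symbolically and apply $\phi^{2}=\phi+1$ at the right moment, rather than juggling nested floors. It is also worth noting that only the crude bound $\phi(1+\alpha)>1$ is needed, so no case split on the size of $\alpha$ — equivalently, on whether $\lfloor m(\phi+1)\rfloor$ equals $N-1$ or $N-2$ — is required; indeed the same computation gives $\lfloor k(\phi+1)\rfloor=N-\lceil\alpha\phi\rceil\in\{N-1,N-2\}$, confirming that $k$ itself does satisfy the hypothesis.
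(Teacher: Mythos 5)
Your proof is correct and follows essentially the same route as the paper's: both bound $m$ by $n/\phi$ after dividing by $\phi+1=\phi^{2}$, and both exclude the candidate $\lfloor n/\phi\rfloor-1$ using the identity $\lfloor n/\phi\rfloor=\lfloor n\phi\rfloor-n$ together with the same exact computation (the paper's quantity $n\phi^{2}-\lfloor n\phi\rfloor\phi\geq 0$ is precisely your $\phi\alpha\geq 0$). The only cosmetic differences are that you carry the fractional part $\alpha$ symbolically, use monotonicity to dismiss all $m\leq k-1$ at once, and dispose of the vacuous case $n=0$ explicitly.
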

\begin{proof}
By (\ref{mphiine}), we have 
\begin{equation}
 n \phi  -3 <   m (\phi + 1)   <  n \phi.\label{mphiine2}
\end{equation}
Since $\phi^2= \phi + 1$, by Inequality $(\ref{mphiine2})$,
\begin{equation}
 \frac{n}{\phi}  -\frac{3}{\phi + 1} <   m   <  \frac{n}{\phi}.\nonumber
\end{equation}
Then,
\begin{equation}
 \lfloor \frac{n}{\phi}\rfloor  -\frac{3}{\phi + 1} <   m   \leq  \lfloor \frac{n}{\phi}\rfloor.\label{mphiine4}
\end{equation}
Since $\phi +1 > 2$, by Inequality $(\ref{mphiine4})$,
$m=\lfloor \frac{n}{\phi}\rfloor$ or $\lfloor \frac{n}{\phi}\rfloor -1$.
Suppose that $m= \lfloor \frac{n}{\phi}\rfloor -1$. Then, by (\ref{mphiine}),
\begin{equation}
\lfloor n \phi \rfloor -2 <  m (\phi + 1)  = \lfloor \frac{n}{\phi}\rfloor (\phi +1) - (\phi + 1),\nonumber
\end{equation}
and hence,  
\begin{align}
2 - (\phi+1) & > \lfloor n \phi \rfloor - \lfloor \frac{n}{\phi}\rfloor (\phi+1)   \nonumber \\
& = \lfloor n \phi \rfloor - \lfloor n (\phi-1) \rfloor (\phi+1) \nonumber \\
& =  \lfloor n \phi \rfloor - (\lfloor n \phi \rfloor -n) (\phi+1) \nonumber \\
& =  n \phi + n - \lfloor n \phi \rfloor \phi     \nonumber \\
& = n \phi^2-\lfloor n \phi \rfloor \phi \geq 0 \label{mphiine6}
\end{align}
Since $(\phi+1) > 2$,
Inequality (\ref{mphiine6}) leads to a contradiction.
Therefore, $m \ne \lfloor \frac{n}{\phi}\rfloor -1$ and $m= \lfloor \frac{n}{\phi}\rfloor$.
\end{proof}

\begin{definition}\label{definitionoffuncf}
By Lemma \ref{twocases}, 
there exists an unique $m$ such that 
$\lfloor n \phi \rfloor -2 \leq \lfloor m(\phi+1) \rfloor \leq \lfloor n \phi \rfloor-1$. 
We define $f(n)= m$.
\end{definition}

\begin{lemma}\label{forntherem}
For the function $f$ in Definition \ref{definitionoffuncf}, we have the following:\\
$(i)$ for any $n$, $f(n)>f(n-1)$ if and only if there exists $m$ such that $\lfloor n \phi \rfloor = \lfloor m(\phi+1)  \rfloor +1$;\\
$(ii)$  $f(n)=h(n-1)$, where $h$ is Hofstadter's sequence.
\end{lemma}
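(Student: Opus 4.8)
The plan is to reduce the statement to two facts: that $f$ coincides with the sequence $n\mapsto\lfloor n/\phi\rfloor$, and that the lower Wythoff sequence $A_1$ has a gap of size $2$ between $a_1(n-1)$ and $a_1(n)$ exactly when $\lfloor n\phi\rfloor-1$ fails to lie in $A_1$.

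First I would record that $f(n)=\lfloor n/\phi\rfloor=h(n-1)$ for every $n$. By construction $f(n)$ is the unique $m$ with $\lfloor n\phi\rfloor-2\le\lfloor m(\phi+1)\rfloor\le\lfloor n\phi\rfloor-1$, and Lemma~\ref{lemmaforgs} asserts that any such $m$ equals $\lfloor n/\phi\rfloor$; moreover $h(n-1)=\lfloor n/\phi\rfloor$ by Theorem~\ref{theoremsust}. Hence the clause ``$f(n)=h(n-1)$'' in the statement holds unconditionally, and it suffices to prove that $f(n)>f(n-1)$ holds if and only if there is an $m$ with $\lfloor n\phi\rfloor=\lfloor m(\phi+1)\rfloor+1$.

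Since $\phi^{2}=\phi+1$ gives $\phi=1+\phi^{-1}$, we have $\lfloor n\phi\rfloor=n+\lfloor n/\phi\rfloor=n+f(n)$, so $\lfloor n\phi\rfloor-\lfloor(n-1)\phi\rfloor=1+\bigl(f(n)-f(n-1)\bigr)$. By Lemma~\ref{lemmaphi12}$(i)$ the left-hand side is $1$ or $2$; therefore $f(n)>f(n-1)$ is equivalent to $\lfloor n\phi\rfloor=\lfloor(n-1)\phi\rfloor+2$. Next I would observe that $\lfloor n\phi\rfloor=\lfloor(n-1)\phi\rfloor+2$ is equivalent to $\lfloor n\phi\rfloor-1\notin A_1$: if the gap is $1$ then $\lfloor n\phi\rfloor-1=\lfloor(n-1)\phi\rfloor=a_1(n-1)\in A_1$, whereas if the gap is $2$ then $\lfloor n\phi\rfloor-1$ lies strictly between the consecutive terms $a_1(n-1)$ and $a_1(n)$ of the strictly increasing sequence $A_1$, so it is not in $A_1$. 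Finally, Lemma~\ref{rayleigh} (complementarity of $A_1$ and $A_2$) shows $\lfloor n\phi\rfloor-1\notin A_1$ iff $\lfloor n\phi\rfloor-1\in A_2$, i.e. iff there is $m$ with $\lfloor m(\phi+1)\rfloor=\lfloor n\phi\rfloor-1$, which is exactly $\lfloor n\phi\rfloor=\lfloor m(\phi+1)\rfloor+1$. Chaining these equivalences proves the lemma; and whenever such an $m$ exists, $\lfloor m(\phi+1)\rfloor=\lfloor n\phi\rfloor-1$ lies in $[\lfloor n\phi\rfloor-2,\lfloor n\phi\rfloor-1]$, so by the uniqueness in the definition of $f$ this $m$ equals $f(n)=h(n-1)$.

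The argument is short, so there is no serious obstacle; the only points needing care are the small-index cases (the complementarity $A_1\cap A_2=\emptyset$ in Lemma~\ref{rayleigh} breaks down at $0$, so one treats $n=0,1$ directly and runs the argument for $n\ge 2$) and matching the gap-size dichotomy to the correct alternative of Lemma~\ref{twocases} when one wants to exhibit the witnessing $m$ explicitly rather than merely assert its existence.
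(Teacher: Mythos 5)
Your proof is correct, but it follows a genuinely different route from the paper's. The paper proves the equivalence by a combinatorial case analysis: it applies Lemma~\ref{twocases} at $n$ (and, in the harder case, again at $n-1$) to pin down where $\lfloor m(\phi+1)\rfloor$ sits relative to $\lfloor(n-1)\phi\rfloor$ and $\lfloor n\phi\rfloor$, and reads off $f(n)$ and $f(n-1)$ explicitly in each of the resulting configurations; only at the very end does it invoke Theorem~\ref{theoremsust} and Lemma~\ref{lemmaforgs} to identify $f(n)$ with $h(n-1)$. You instead promote that identification to the first step, obtaining the closed form $f(n)=\lfloor n/\phi\rfloor$ unconditionally, and then exploit the algebraic identity $\lfloor n\phi\rfloor=n+\lfloor n/\phi\rfloor=n+f(n)$ to convert $f(n)-f(n-1)$ directly into the gap $\lfloor n\phi\rfloor-\lfloor(n-1)\phi\rfloor-1$; Lemma~\ref{lemmaphi12}$(i)$ and Rayleigh complementarity (Lemma~\ref{rayleigh}) then finish the job. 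Your version is shorter and replaces the nested case analysis with a single arithmetic identity, at the modest cost of having to flag the degenerate indices ($n=0,1$, where $f(n-1)$ is undefined or the complementarity fails at $0$) --- a caveat you correctly make, and which the paper's ``for any $n$'' silently glosses over as well. The paper's approach, by contrast, exhibits the witnessing $m$ and the value of $f(n-1)$ concretely in each configuration, which is closer in spirit to how Lemma~\ref{twocases} is used elsewhere in the article. Both arguments rest on the same two external inputs (Lemma~\ref{lemmaforgs} and Theorem~\ref{theoremsust}) for the clause $f(n)=h(n-1)$.
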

\begin{proof}
By using Lemma \ref{twocases} for $n$, we have two cases.\\
\noindent {\tt Case 1}: Suppose that $\lfloor m (\phi + 1) \rfloor $,  $\lfloor (n-1) \phi \rfloor$, $\lfloor n \phi \rfloor$ are three consecutive numbers. Then, $f(n)=f(n-1)=m$.\\
\noindent {\tt Case 2}:  Suppose that $\lfloor (n-1) \phi \rfloor$, $\lfloor m (\phi +1) \rfloor$, $\lfloor n \phi \rfloor$ are three consecutive numbers. Next, we use Lemma \ref{twocases} for $n-1$. Then, we have two subcases.\\
\noindent {\tt Subcase 1}: If $\lfloor (n-2) \phi \rfloor$, $\lfloor (m-1) ( \phi +1) \rfloor$,$\lfloor (n-1) \phi \rfloor$, $\lfloor m( \phi+1) \rfloor$, $\lfloor n \phi \rfloor$ are five consecutive numbers, then $f(n)=m$ and $f(n-1)=m-1$.\\
\noindent {\tt Subcase 2}: If 
$\lfloor (m-1) (\phi + 1) \rfloor $, $\lfloor (n-2) \phi \rfloor$, $\lfloor (n-1) \phi \rfloor$, $\lfloor m (\phi + 1) \rfloor $, $\lfloor n \phi \rfloor$ are   
 five consecutive numbers, then, $f(n)=m$ and $f(n-1)=m-1$.

Therefore, $f(n)>f(n-1)$ if and only if there exists $m$ such that $\lfloor n \phi \rfloor = \lfloor m(\phi+1) \rfloor +1$.
By Theorem \ref{theoremsust} and Lemma \ref{lemmaforgs}, 
$f(n)= \lfloor  \frac{n}{\phi } \rfloor =  h(n-1)$.
\end{proof}

\begin{corollary}\label{relationhof}
For the function $g$ that is defined in Definition \ref{newfunction},
we have for $n \geq 2$,
\begin{equation}
g(n)=
\begin{cases}
1-g(h(n-1)) & (\mbox{ if } h(n-2) < h(n-1)), \nonumber \\ 
1 &  (else),\nonumber
\end{cases}
\end{equation} 
where $h$ is the Hofstadter G-sequence.
\end{corollary}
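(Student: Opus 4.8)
The plan is to reduce Corollary~\ref{relationhof} to Lemma~\ref{forntherem} together with the case analysis already embedded in Definition~\ref{newfunction}. First I would note that the ``else'' branch of Definition~\ref{newfunction} produces $g(n)=1$ precisely when there is \emph{no} $m$ with $\lfloor n\phi\rfloor = \lfloor m(\phi+1)\rfloor + 1$; by Lemma~\ref{forntherem} this happens if and only if $f(n) = f(n-1)$, i.e.\ (invoking $f(n) = h(n-1)$ and $f(n-1) = h(n-2)$ from Theorem~\ref{theoremsust} and Lemma~\ref{lemmaforgs}) if and only if $h(n-2) = h(n-1)$. Since $f$ is nondecreasing in $n$ (again from the case analysis in Lemma~\ref{forntherem}, where $f(n)$ is either $f(n-1)$ or $f(n-1)+1$), the negation ``$h(n-2) < h(n-1)$'' is exactly the first branch. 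So the two branches of Corollary~\ref{relationhof} match the two branches of Definition~\ref{newfunction} at the level of \emph{which} case applies.

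Second I would pin down the value in the first branch. When $\lfloor n\phi\rfloor = \lfloor m(\phi+1)\rfloor + 1$ for some $m$, Definition~\ref{newfunction} gives $g(n) = 1 - g(m)$, and this $m$ is unique (it is $f(n)$, by the definition of $f$ and Lemma~\ref{twocases}). Lemma~\ref{forntherem} identifies this same $m$ with $h(n-1)$. Hence $g(n) = 1 - g(h(n-1))$, which is exactly the first branch of the corollary. The only thing to double-check is that the hypothesis $n \geq 2$ makes $h(n-1)$ and $h(n-2)$ well-defined and keeps us away from the special base values $g(0)=1$, $g(1)=0$; since $h$ is defined on all of $\mathbb{Z}_{\geq 0}$ this is immediate, and one should also verify $m = h(n-1) < n$ so that $g(m)$ is referenced at a strictly smaller index, keeping the recursion well-founded — this follows from $h(n-1) = \lfloor n/\phi\rfloor < n$.

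The main obstacle, such as it is, is purely bookkeeping: making sure the quantities $f(n)$, the $m$ appearing in Definition~\ref{newfunction}, and $h(n-1)$ are literally the same integer, and that ``$f(n) > f(n-1)$'' in Lemma~\ref{forntherem} is equivalent to the clean inequality ``$h(n-2) < h(n-1)$'' stated in the corollary. This is handled by combining $f = $ (the unique $m$ with $\lfloor n\phi\rfloor - 2 \le \lfloor m(\phi+1)\rfloor \le \lfloor n\phi\rfloor - 1$), the formula $f(n) = h(n-1)$ from the end of the proof of Lemma~\ref{forntherem}, and the monotonicity of $f$. No new estimates are required; the corollary is essentially a restatement of Lemma~\ref{forntherem} in terms of $g$ and $h$, so the write-up is a short chain of equivalences with one line to confirm the recursion stays well-founded.
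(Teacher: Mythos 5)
Your proposal is correct and follows essentially the same route as the paper, whose proof simply states that the corollary follows directly from Definition \ref{newfunction} and Lemma \ref{forntherem}; you are just making explicit the bookkeeping (the identification $m = f(n) = h(n-1)$, the matching of the two branch conditions via monotonicity of $f$, and the well-foundedness of the recursion) that the paper leaves implicit.
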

\begin{proof}
This is derived directly from Definition \ref{newfunction} and Lemma \ref{forntherem}.
\end{proof}

By Corollary \ref{relationhof}, we redefined the function $g(n)$ by 
 the Hofstadter's G-sequence $h$.

\section{The Misere Version of the Variant of Wythoff's Game}

\begin{definition}\label{miserevariantd1}
Here, we define the misere version of the game in Definition \ref{wythoffvar}. In misere game, the player who plays for the last time loses the game.
In this game 
the player who move into the set $\{(x,y):x+y \leq 2\}$ $=\{(0,0),(1,0),(0,1)$ 
$,(1,1),(2,0),(0,2)\}$ is the loser.
Let $P_2$ be the set of $\mathcal{P}$-positions of this game.
\end{definition}

\begin{figure}[H]
\begin{tabular}{ccc}
\begin{minipage}[t]{0.33\textwidth}
\begin{center}
	\includegraphics[height=2.5cm]{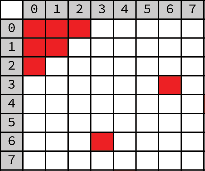}
\caption{Set A}
\label{variantw}
\end{center}
\end{minipage}
\begin{minipage}[t]{0.33\textwidth}
\begin{center}
\includegraphics[height=2.5cm]{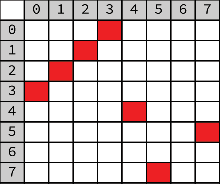}
\caption{Set B}
\label{miserevariant}
\end{center}
\end{minipage}
\begin{minipage}[t]{0.33\textwidth}
\begin{center}
\includegraphics[height=2.5cm]{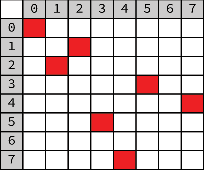}
\caption{Set C}
\label{wythoffpa}
\end{center}
\end{minipage}
\end{tabular}
\end{figure}

\begin{definition}\label{defnofabc}
Let 
$A=\{(0,0),(0,1),(0,2),(1,0),(1,1),(2,0),(3,6),(6,3)\}$,\\
$B=\{(0,3),(1,2),(2,1),(3,0),(4,4),(5,7)$
$,(7,5)\}$, and 
$C=\{(0,0),(1,2),(2,1),$\\
$(3,5),(5,3),(4,7),(7,4)\}$.\\
\end{definition}

\begin{lemma}\label{lemmaforsetsabc}
For Sets $A,B$ and $C$ in Definition \ref{defnofabc}, we obtain the following;\\
$(i)$ $A=P_1 \cap \{(x,y):x,y \leq 7\}$;\\
$(ii)$ $B=P_2 \cap \{(x,y):x,y \leq 7\}$;\\
$(iii)$ $C=P_0 \cap \{(x,y):x,y \leq 7\}$.
\end{lemma}
\begin{proof}
By Definition \ref{newfunction} and Example \ref{exampleforb1b2}, 
$\{(b_1(0),b_2(0)), (b_1(1),b_2(1)), (b_1(2),b_2(2)),$ \\
$(b_1(3),b_2(3))\} \cup \{(0,0),(1,1)\}$
$=\{(0,1), (0,2), (3,6), (4,8)\} \cup \{(0,0),(1,1)\} \subset P_1$, and hence 
we obtain $\mathrm{(i)}$.  We obtain $\mathrm{(iii)}$ directly from Theorems \ref{theoremforwythoffandvar} and  \ref{theoremforwythoff}.

We prove $\mathrm{(ii)}$.
By the definition of the game in Definition \ref{miserevariantd1}, $(3,0),(2,1),(1,2),$ \\ $(0,3) \in P_3$. 
It is clear that these four positions are only $\mathcal{P}$-positions of the game $\{(v,w):v+w \leq 3\}$.
From any position $(x,y)$ such that $4 \leq x+y \leq 7$,
you can move to one of $(3,0),(2,1),(1,2),(0,3)$, and hence 
$(x,y)$ is a $\mathcal{N}$-position when $4 \leq x+y \leq 7$.
From $(4,4)$, you cannot move to any $\mathcal{P}$-position,
and hence $(4,4)$ is a $\mathcal{P}$-position, but from 
any position $(x,y)$ such that $x+y = 8$ and $(x,y) \ne (4,4)$, you can move to a  $\mathcal{P}$-position. Hence, these positions are  $\mathcal{N}$-positions. Similarly, we prove that $(5,7),(7,5)$ are $\mathcal{P}$-positions of the game in Definition \ref{miserevariantd1}.
\end{proof}

\begin{lemma}\label{lemmamove}
Let $x,y \in \mathbb{Z}_{\geq0}$ such that $x \geq 8$ or $y \geq 8$. Then, we obtain the following:\\
$(i)$ if  $y \leq 7$ and $y \ne 6$,  then $M_1(x,y) \cap B \ne \emptyset$, $M_1(x,y) \cap C \ne \emptyset$,  
$M_1(x,6) \cap B = M_1(x,6) \cap C = \emptyset$, and 
$M_2(x,y) \cap C = M_2(x,y) \cap B = \emptyset$;\\
$(ii)$ if $x \leq 7$ and $x \ne 6$, then $M_2(x,y) \cap B \ne \emptyset$, $M_2(x,y) \cap C \ne \emptyset$,  
$M_2(6,y) \cap B = M_2(6,y) \cap C = \emptyset$, and $M_1(x,y) \cap C = M_1(x,y) \cap B = \emptyset$;\\
$(iii)$ if $x \leq y+3$ and $y \leq x+3$, then 
$M_3(x,y) \cap B  \ne \emptyset$, and 
$M_3(x,y) \cap C \ne \emptyset$;\\
$(iv)$ if $x \geq y+4$ or $y \geq x+4$, then $M_3(x,y) \cap B = M_3(x,y) \cap C = \emptyset$.
\end{lemma}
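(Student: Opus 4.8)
The plan is to reduce all four parts to two elementary observations, since the sets $B$ and $C$ in Definition \ref{defnofabc} are finite and live entirely in $\{(x,y):x\le 7,\ y\le 7\}$, while the standing hypothesis $x\ge 8$ or $y\ge 8$ forces one coordinate to exceed every coordinate occurring in $B\cup C$. First I would record how the three move types act on the relevant invariants: a horizontal move $M_1$ keeps the second coordinate fixed and strictly decreases the first; a vertical move $M_2$ keeps the first coordinate fixed and strictly decreases the second; a diagonal move $M_3$ keeps the difference $x-y$ fixed and strictly decreases both coordinates. Hence $M_1(x,y)$ meets a set $S$ exactly when $S$ contains a point with second coordinate $y$ and first coordinate $<x$; $M_2(x,y)$ meets $S$ exactly when $S$ contains a point with first coordinate $x$ and second coordinate $<y$; and $M_3(x,y)$ meets $S$ exactly when $S$ contains a point $(a,b)$ with $a-b=x-y$, $a<x$, and $b<y$.

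Next I would extract the finite data needed from $B$ and $C$. For the analysis of $M_1$ and $M_2$: the coordinates (both first and second) that occur in $B$ are exactly $\{0,1,2,3,4,5,7\}$, and the same holds for $C$; in particular $6$ occurs as neither a first nor a second coordinate of any point of $B\cup C$, and every coordinate occurring in $B\cup C$ is $\le 7$. For the analysis of $M_3$: the set of differences $a-b$ over $(a,b)\in B$ is exactly $\{-3,-2,-1,0,1,2,3\}$, each value attained by a unique point of $B$, and likewise for $C$. These are just read off from the explicit lists in Definition \ref{defnofabc}.

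With this in place the four parts are short. For $(i)$, the hypothesis together with $y\le 7$ forces $x\ge 8$; for each admissible $y\in\{0,1,2,3,4,5,7\}$ there is a point of $B$ (and a point of $C$) with that second coordinate, and its first coordinate is $\le 7<x$, so it lies in $M_1(x,y)$; since $6$ is not a second coordinate of any point of $B\cup C$, $M_1(x,6)$ misses both sets; and since $x\ge 8$ strictly exceeds every first coordinate occurring in $B\cup C$, no vertical move from $(x,y)$ can land in $B$ or $C$. Part $(ii)$ is the mirror image with the roles of the two coordinates interchanged, now using $y\ge 8$. For $(iii)$, $|x-y|\le 3$ lets me select the unique $(a,b)\in B$ with $a-b=x-y$; setting $t:=x-a=y-b$, consistency of these two expressions plus $a,b\le 7$ and the hypothesis $x\ge 8$ or $y\ge 8$ gives $t\ge 1$, while $t\le\min(x,y)$ holds automatically since $a,b\ge 0$, so $(a,b)\in M_3(x,y)$; the same argument works for $C$. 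For $(iv)$, every position reachable from $(x,y)$ by a diagonal move has difference $x-y$ with $|x-y|\ge 4$, which no point of $B$ or of $C$ has, so both intersections are empty.

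The only step that is not pure bookkeeping is confirming that the witness in $(iii)$ is genuinely reachable, i.e.\ that the step size $t=x-a$ satisfies $1\le t\le\min(x,y)$. This is handled by splitting into the cases $x\ge 8$ and $y\ge 8$: in the first, $a\le 7<x$ gives $t\ge 1$ directly; in the second, $b\le 7<y$ gives $t=y-b\ge 1$, and then $x-a=y-b=t\ge 1$ as well. Once this is observed, everything else is a finite check against the lists for $B$ and $C$, so I expect no real obstacle beyond organizing the cases cleanly.
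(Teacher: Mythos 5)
Your proof is correct, and it takes essentially the same approach as the paper: a direct finite verification against the explicit sets $B$ and $C$ (the paper simply cites Figures 13 and 14, whereas you carry out the inspection in words, recording the coordinate sets $\{0,1,2,3,4,5,7\}$ and the difference sets $\{-3,\dots,3\}$). Your write-up is in fact more rigorous than the paper's one-line appeal to the figures, and the reachability check $1\le t\le\min(x,y)$ in part $(iii)$ is handled correctly.
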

\begin{proof}
This lemma follows directly from Figures \ref{miserevariant} and \ref{wythoffpa}.
\end{proof}

\begin{lemma}\label{lemmamove2}
For $x,y \in \mathbb{Z}_{\geq0}$ such that $x \geq 8$ or $y \geq 8$, we obtain the following:\\ %followingでよいか？
$(i)$ $M_1(x,y) \cap B \ne \emptyset$ if and only if $M_1(x,y) \cap C \ne \emptyset$;\\
$(ii)$ $M_2(x,y) \cap B \ne \emptyset$ if and only if $M_2(x,y) \cap C \ne \emptyset$;\\
$(iii)$ $M_3(x,y) \cap B \ne \emptyset$ if and only if $M_3(x,y) \cap C \ne \emptyset$.
\end{lemma}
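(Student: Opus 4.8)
The plan is to deduce all three equivalences directly from Lemma \ref{lemmamove}, using only the elementary observation that every point of $B$ and of $C$ has both coordinates at most $7$. Lemma \ref{lemmamove} already records, under the standing hypothesis $x \ge 8$ or $y \ge 8$, exactly when each of $M_1, M_2, M_3$ reaches $B$ and exactly when it reaches $C$; so what remains is a short case analysis verifying that in every case the condition for meeting $B$ and the condition for meeting $C$ have the same truth value.

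For $(i)$ I would note that a move in $M_1(x,y)$ changes only the first coordinate, so every position it reaches has second coordinate $y$, and then split on $y$. If $y \ge 8$, no point of $B$ or of $C$ has second coordinate $y$, so $M_1(x,y) \cap B = M_1(x,y) \cap C = \emptyset$ and the equivalence is vacuous. If $y \le 7$, then $x \ge 8$ by hypothesis, and Lemma \ref{lemmamove}$(i)$ gives $M_1(x,y) \cap B \ne \emptyset$ together with $M_1(x,y) \cap C \ne \emptyset$ when $y \ne 6$, and $M_1(x,6) \cap B = M_1(x,6) \cap C = \emptyset$; in each subcase the two membership statements agree. Part $(ii)$ is the mirror image: split on $x$, dispatch $x \ge 8$ directly, and for $x \le 7$ quote Lemma \ref{lemmamove}$(ii)$ for the cases $x \ne 6$ and $x = 6$.

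For $(iii)$ I would use that a diagonal move sends $(x,y)$ to a position with the same value of $x-y$, so $M_3(x,y)$ can meet a set only through points having that difference of coordinates. When $x \le y+3$ and $y \le x+3$, Lemma \ref{lemmamove}$(iii)$ makes both $M_3(x,y) \cap B$ and $M_3(x,y) \cap C$ nonempty; otherwise $x \ge y+4$ or $y \ge x+4$ and Lemma \ref{lemmamove}$(iv)$ makes both empty; either way the two conditions coincide. I do not expect a genuine obstacle, since the statement is essentially a repackaging of Lemma \ref{lemmamove}; the only point needing a little care is that the subcases $y \ge 8$ in $(i)$ and $x \ge 8$ in $(ii)$ are not literally listed among the hypotheses of Lemma \ref{lemmamove} and so must be handled directly from the fact that $B$ and $C$ are contained in positions with both coordinates at most $7$.
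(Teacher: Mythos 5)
Your proposal is correct and takes essentially the same route as the paper, which simply derives all three parts from Lemma \ref{lemmamove}; your extra care with the cases $y \geq 8$ in $(i)$ and $x \geq 8$ in $(ii)$ (where both intersections are vacuously empty because $B$ and $C$ lie in $\{(u,v): u,v \leq 7\}$) is a detail the paper leaves implicit but is handled correctly here.
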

\begin{proof}
This lemma follows directly from  Lemma \ref{lemmamove}. 
\end{proof}

Theorem \ref{twogamesth} below shows the similarity between the misere version of Wythoff's game and Wythoff's game.
\begin{theorem}\label{twogamesth}
When $x \geq 8$ or $y \geq 8$, a position $(x,y)$ is a 
 $\mathcal{P}$-position of Wythoff's game if and only if it is a  $\mathcal{P}$-position of the game in Definition \ref{miserevariantd1}.
\end{theorem}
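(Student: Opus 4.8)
The plan is to prove both directions by induction on $x+y$, using the structural correspondence between the $\mathcal{P}$-positions of the two games that has already been isolated in Lemmas~\ref{lemmamove}--\ref{lemmamove2}. Recall the standard characterization: a position is a $\mathcal{P}$-position if and only if no move from it reaches a $\mathcal{P}$-position, and it is an $\mathcal{N}$-position if and only if some move reaches a $\mathcal{P}$-position. For Wythoff's game the $\mathcal{P}$-positions are $P_0$ (Theorem~\ref{theoremforwythoff}); for the misère variant they form the set $P_2$ (Definition~\ref{miserevariantd1}). Write $P_0' = \{(x,y)\in P_0 : x\ge 8 \text{ or } y\ge 8\}$ and $P_2' = \{(x,y)\in P_2 : x\ge 8 \text{ or } y\ge 8\}$; the goal is $P_0' = P_2'$.

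First I would record the ``base layer'': by Lemma~\ref{lemmaforsetsabc}, for positions with $x,y\le 7$ the $\mathcal{P}$-positions of Wythoff's game are exactly $C$ and those of the misère variant are exactly $B$, and $B\ne C$ in general, but both $B$ and $C$ contain only positions with $x+y\le 8$ lying inside the $7\times 7$ box together with the three ``exceptional'' points on each list. The key observation driving the induction is that a move from a position $(x,y)$ with $x\ge 8$ or $y\ge 8$ either lands again in the region $\{x\ge 8 \text{ or } y\ge 8\}$, where by the induction hypothesis the two $\mathcal{P}$-sets agree, or lands in the small box $\{x,y\le 7\}$, where it might hit $B$ or $C$. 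Lemma~\ref{lemmamove2} is precisely the statement that for such $(x,y)$, $M_i(x,y)$ meets $B$ if and only if it meets $C$, for each $i=1,2,3$. Therefore, for $(x,y)$ with $x\ge 8$ or $y\ge 8$, the set of moves reaching a $\mathcal{P}$-position of Wythoff's game is nonempty if and only if the corresponding set for the misère variant is nonempty: the ``far'' moves contribute identically by induction, and the ``near'' moves contribute identically by Lemma~\ref{lemmamove2}.

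Concretely I would set up a simultaneous induction on $s = x+y$ over positions with $x\ge 8$ or $y\ge 8$ (noting $s\ge 8$ automatically), proving the biconditional ``$(x,y)\in P_0 \iff (x,y)\in P_2$'' for all such positions of sum $s$, assuming it for all smaller sums. Given $(x,y)$, consider the three families of moves $M_1,M_2,M_3$. A move in $M_i(x,y)$ whose target again has a coordinate $\ge 8$ reaches $P_0$ iff it reaches $P_2$ by the induction hypothesis. A move whose target has both coordinates $\le 7$ reaches $P_0$ iff the target is in $C$, and reaches $P_2$ iff the target is in $B$; Lemma~\ref{lemmamove2} says $M_i(x,y)\cap C\ne\emptyset \iff M_i(x,y)\cap B\ne\emptyset$ for each $i$. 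Combining, $\mathrm{move}(x,y)\cap P_0 = \emptyset \iff \mathrm{move}(x,y)\cap P_2 = \emptyset$, which is exactly the statement that $(x,y)\in P_0 \iff (x,y)\in P_2$, closing the induction. One must be slightly careful that the terminal conventions differ (normal play versus misère), but this only affects positions with $x+y\le 2$, which are well inside the small box, so it is already absorbed into the distinction between $B$ and $C$ at the base layer and never re-enters the inductive step.

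The main obstacle, and the only place real content is used, is Lemma~\ref{lemmamove2}: that the ``shadow'' of $B$ and of $C$ under horizontal, vertical, and diagonal moves coincide when viewed from outside the $7\times 7$ box. This is reduced in the paper to Lemma~\ref{lemmamove}, which is in turn asserted to follow by inspection of Figures~\ref{miserevariant} and~\ref{wythoffpa}; so in the write-up I would either cite Lemma~\ref{lemmamove2} directly or, to be self-contained, spell out the handful of cases: a horizontal move from $(x,y)$ with $y\le 7$ can reach $B$ or $C$ only along row $y$, and the rows occupied by $B$ and by $C$ (other than row $6$, which neither occupies for $x\ge 8$) are the same, with matching first coordinates; similarly for vertical moves by symmetry; and a diagonal move preserves $x-y$, so it can reach $B$ or $C$ only when $|x-y|\le 3$, and along each such diagonal $B$ and $C$ each contain exactly one reachable point. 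Once Lemma~\ref{lemmamove2} is in hand the induction is essentially bookkeeping, so I would keep that part brief and concentrate the exposition on making the case analysis behind Lemma~\ref{lemmamove2} transparent.
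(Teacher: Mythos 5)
Your proposal is correct and follows essentially the same route as the paper: an induction on $x+y$ over the region $\{x\ge 8 \text{ or } y\ge 8\}$, splitting each move set $M_i(x,y)$ into the part landing back in that region (handled by the induction hypothesis) and the part landing in the box $\{x,y\le 7\}$ (handled by Lemma~\ref{lemmamove2} via the sets $B$ and $C$). The only cosmetic difference is that the paper runs a separate explicit base case on $U_{15}-V_7$, which your formulation absorbs into the general inductive step.
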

\begin{proof}
Let $V_7 = \{(x,y):x,y \leq 7\}$ and  $U_k=\{(x,y):x+y \leq k\}$, and by mathematical induction we prove that 
\begin{equation}
(U_{n}-V_7) \cap P_2 = 
(U_{n}-V_7) \cap P_0  \nonumber
\end{equation}
for any natural number $n$.
Since $(U_{15}-V_7) \subset \{(u,v):u \geq 8 \text{ and } v \leq 7\}$
$\cup \{(u,v):u \leq 7 \text{ and } v \geq 8\}$, by  $(i)$ and $(ii)$ of Lemma \ref{lemmamove}, any point $(x,y) \in U_{15}-V_7$ such that $x \ne 6$ or $y \ne 6$ is a $\mathcal{N}$-position of the game in Definition \ref{miserevariantd1} and Wythoff's game. 
By  $(iii)$ of Lemma \ref{lemmamove}, the set $\{(6,8),(6,9),(8,6),(9,6)\}$  is a set of $\mathcal{N}$-positions of the game in Definition \ref{variantsum} and Wythoff's game. 

Therefore, we obtain 
\begin{equation}
(U_{15}-V_7) \cap P_0 = \emptyset \nonumber
\end{equation}
and
\begin{equation}
(U_{15}-V_7) \cap P_2 = \emptyset. \nonumber
\end{equation}
Therefore, 
$(U_{15}-V_7) \cap P_0 = (U_{15}-V_7) \cap P_2 $.

For some natural number $k$ with $k \geq 16$, we suppose that 
\begin{equation}
(U_{k}-V_7) \cap P_0 = 
(U_{k}-V_7) \cap P_2.   \label{ukv8requa} 
\end{equation}
Let  $x,y \in \mathbb{Z}_{\ge 0}$ such that $(x,y) \in U_{k+1}-V_7$.
Then, for $i=1,2,3$, by Definition \ref{movewythoff}
\begin{equation}
M_i(x,y) \subset U_k,\nonumber
\end{equation}
and hence we have 
\begin{align}
M_i(x,y)\cap P_2& = M_i(x,y) \cap ((U_k-V_7) \cup V_7) \cap P_2 \nonumber \\
& = (M_i(x,y) \cap (U_k-V_7)\cap P_2) \cup (M_i(x,y) \cap V_7\cap P_2)  \nonumber \\
& = (M_i(x,y) \cap (U_k-V_7)\cap P_2) \cup (M_i(x,y) \cap B) \label{m1andb} 
\end{align}
and
\begin{align}
M_i(x,y)\cap P_0& = M_i(x,y) \cap ((U_k-V_7) \cup V_7) \cap P_0 \nonumber \\
& = (M_i(x,y) \cap (U_k-V_7)\cap P_0) \cup (M_i(x,y) \cap V_7\cap P_0)  \nonumber \\
& = (M_i(x,y) \cap (U_k-V_7)\cap P_0) \cup (M_i(x,y) \cap C). \label{m1andb2}
\end{align}      
By Lemma \ref{lemmamove2}, Equations  (\ref{ukv8requa}), (\ref{m1andb}), and (\ref{m1andb2}), we have 
\begin{equation}
M_i(x,y)\cap P_2 = \emptyset \text{ if and only if } M_i(x,y)\cap P_0 = \emptyset \label{mirelation}
\end{equation}
for $i=1,2,3$.
Hence,
\begin{equation}
(U_{k+1}-V_7) \cap P_2 = 
(U_{k+1}-V_7) \cap P_0.   \nonumber
\end{equation}
Therefore, by mathematical induction, we have 
\begin{equation}
(U_{n}-V_7) \cap P_2 = 
(U_{n}-V_7) \cap P_0  \nonumber 
\end{equation}
 for any natural number $n$. 
\end{proof}

\section{The Sum of the Variant of Wythoff's game and a one-pile Nim}

\begin{definition}\label{variantsum}
Applying Definition \ref{sumofgames}, we define the sum of the game in Definition \ref{wythoffvar} and the game of a pile of one stone. 
\end{definition}
We denote the position of the game in Definition \ref{variantsum} by 
 three coordinates $\{x,y,z\}$. The coordinates $x,y$ define the number of stones in the first and second piles, or, if we use a queen in the game, the position of the queen on the chessboard. 
The parameter $z=1$ if there is a stone in the third pile, and $z=0$ if there is no stone in the third pile. Note that when $z=0$, the games in  Definitions \ref{variantsum} and \ref{wythoffvar} are the same game. Let $P_3$ be the set of  $\mathcal{P}$-positions of the game in Definition \ref{variantsum}, and let $M_4(x,y,z)=(x,y,0)$.

\begin{definition}\label{defofp4}
Let $P_4=\{(x,y,1):(x,y) \in P_2\} \cup \{(x,y,0):(x,y) \in P_1\}$.
\end{definition}
Next, we aim to prove that $P_3=P_4$ and use it to prove the last theorem of this paper.
\begin{definition}\label{defnofabc2}
Let $A^{*}=\{(0,0,0),(0,1,0),(0,2,0),(1,0,0),(1,1,0),(2,0,0),(3,6,0),$\\
$(6,3,0)\}$ and 
$B^{*}=\{(0,3,1),(1,2,1),(2,1,1),(3,0,1),(4,4,1),(5,7,1)$
$,(7,5,1)\}$.
\end{definition}

\begin{lemma}\label{lemmaforp4}
For Sets $P_4$, $A^{*}$, and $B^{*}$, we have the following equation.
\begin{equation}
P_4 \cap \{(x,y,z):x,y \leq 7\} = A^{*}\cup B^{*}.\label{p4equalab}
\end{equation}
\end{lemma}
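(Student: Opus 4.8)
The plan is to reduce the claimed identity to a short set-theoretic computation built on the definition of $P_4$ (Definition~\ref{defofp4}) together with parts $(i)$ and $(ii)$ of Lemma~\ref{lemmaforsetsabc}. First I would split $P_4$ into its two defining pieces and intersect each separately with the slab $\{(x,y,z):x,y\leq 7\}$. Since the constraint $x,y\leq 7$ does not involve the third coordinate $z$, intersecting $\{(x,y,1):(x,y)\in P_2\}$ with the slab gives exactly $\{(x,y,1):(x,y)\in P_2 \text{ and } x,y\leq 7\}$, and intersecting $\{(x,y,0):(x,y)\in P_1\}$ with the slab gives exactly $\{(x,y,0):(x,y)\in P_1 \text{ and } x,y\leq 7\}$.

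Then I would substitute the two identifications already proved: part $(i)$ of Lemma~\ref{lemmaforsetsabc} yields $P_1\cap\{(x,y):x,y\leq 7\}=A$, and part $(ii)$ yields $P_2\cap\{(x,y):x,y\leq 7\}=B$. This gives
\begin{equation}
P_4\cap\{(x,y,z):x,y\leq 7\} = \{(x,y,1):(x,y)\in B\}\cup\{(x,y,0):(x,y)\in A\}.\nonumber
\end{equation}
Finally I would append the appropriate third coordinate to each element listed in $A$ and $B$ of Definition~\ref{defnofabc} and observe, by direct comparison with Definition~\ref{defnofabc2}, that $\{(x,y,0):(x,y)\in A\}$ is precisely the list $A^{*}$ and $\{(x,y,1):(x,y)\in B\}$ is precisely the list $B^{*}$, which establishes Equation~(\ref{p4equalab}).

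There is no genuine obstacle in this argument; all of the real combinatorial work — determining the $\mathcal{P}$-positions of the variant and of its misère version in the range $x,y\leq 7$ — has already been carried out in Lemma~\ref{lemmaforsetsabc}. The only points requiring care are purely bookkeeping: making sure the enumeration of $A$ and $B$ in Definition~\ref{defnofabc} matches the enumeration of $A^{*}$ and $B^{*}$ in Definition~\ref{defnofabc2} element for element (for instance $(3,6)\leftrightarrow(3,6,0)$ and $(4,4)\leftrightarrow(4,4,1)$), and confirming that the restriction $x,y\leq 7$ is applied independently of the value of $z$ so that both pieces of $P_4$ are cut down correctly.
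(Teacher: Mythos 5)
Your proposal is correct and follows essentially the same route as the paper: both reduce the claim to Lemma~\ref{lemmaforsetsabc} together with Definitions~\ref{defofp4} and~\ref{defnofabc2}, and your version merely spells out the bookkeeping (splitting $P_4$ by the value of $z$, applying the restriction $x,y\leq 7$ to each piece, and matching $A\leftrightarrow A^{*}$, $B\leftrightarrow B^{*}$) that the paper leaves implicit. If anything, your explicit use of the full equalities $P_1\cap\{(x,y):x,y\leq 7\}=A$ and $P_2\cap\{(x,y):x,y\leq 7\}=B$ is slightly more careful than the paper's statement, which records only the inclusions $A\subset P_1$ and $B\subset P_2$ even though both directions are needed.
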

\begin{proof}
By Lemma \ref{lemmaforsetsabc}, and Definitions
\ref{defofp4}, and \ref{defnofabc2}, we obtain Equation $(\ref{p4equalab})$.
\end{proof}

\begin{lemma}\label{thesumlemma}
$P_3 \cap \{(x,y,z):x,y \leq 7\} = A^{*}\cup B^{*}.$
\end{lemma}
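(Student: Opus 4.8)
The plan is to reduce the claim to a finite verification via Sprague--Grundy theory. The game of Definition~\ref{variantsum} is the disjunctive sum of the variant of Definition~\ref{wythoffvar} with a one--pile Nim holding at most one stone, and that pile has Grundy number $z$; writing $\Gamma(x,y)$ for the Grundy number of $(x,y)$ in the variant, Theorem~\ref{theoremofsumg} gives that the Grundy number of $(x,y,z)$ in the sum is $\Gamma(x,y)\oplus z$, so that
\[
P_3=\{(x,y,0):\Gamma(x,y)=0\}\cup\{(x,y,1):\Gamma(x,y)=1\}.
\]
Every move $M_1,M_2,M_3$ strictly decreases a coordinate and $M_4$ decreases $z$, so $\{(x,y,z):x,y\le 7\}$ is closed under all moves; hence $P_3\cap\{(x,y,z):x,y\le 7\}$ is governed entirely by the finite subgame on $x,y\le 7$, and it suffices to locate, for $x,y\le 7$, the positions of the variant with Grundy number $0$ and those with Grundy number $1$.

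First I would dispose of Grundy number $0$: by Theorem~\ref{theoremofsumg}$(i)$ these are precisely the $\mathcal{P}$-positions of the variant, which for $x,y\le 7$ form the set $A$ by Lemma~\ref{lemmaforsetsabc}$(i)$, so the first part above contributes exactly $A^{*}$. For Grundy number $1$ I would evaluate the $8\times 8$ table of Grundy numbers of the variant directly from the mex recursion of Definition~\ref{defofmexgrundy}, starting from the six terminal positions of $\{x+y\le 2\}$, all of which have Grundy number $0$; the entries equal to $1$ turn out to be exactly the seven positions of $B$, so the second part contributes $B^{*}$. Combining this with Lemma~\ref{lemmaforp4} (or directly with Definition~\ref{defnofabc2}) yields $P_3\cap\{(x,y,z):x,y\le 7\}=A^{*}\cup B^{*}$.

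Alternatively one can skip the table and check directly that $S:=A^{*}\cup B^{*}$ satisfies the two defining properties of a $\mathcal{P}$-position set on $\{x,y\le 7\}$. No move from $S$ stays in $S$: a board move within $A^{*}$ would be a variant move between two $P_1$-positions, impossible by Lemma~\ref{frompnotp}; a board move within $B^{*}$ would be a move between two $\mathcal{P}$-positions of the misere variant, again impossible; and $M_4$ sends $(x,y,1)\in B^{*}$ to $(x,y,0)$ with $(x,y)\in B$, which is not in $A^{*}$ since $A\cap B=\emptyset$. Conversely, if $(x,y,z)\notin S$ with $x,y\le 7$: when $z=0$ we have $(x,y)\notin P_1$ and Lemma~\ref{fromntoplemma} supplies a variant move into $P_1$, whose target has coordinates $\le 7$ and hence lies in $A$, so we reach $A^{*}$; when $z=1$ and $(x,y)\in A$, the move $M_4$ reaches $(x,y,0)\in A^{*}$; and when $z=1$ with $(x,y)\notin A\cup B$, the position $(x,y)$ is a non-terminal $\mathcal{N}$-position of the misere variant, so some variant move from it reaches a $\mathcal{P}$-position, necessarily in $B$, and we reach $B^{*}$. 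The only point that needs care in this route is the interplay between the enlarged terminal set $\{x+y\le 2\}$ and the move $M_4$: a position $(x,y,1)$ whose board component is terminal has no board moves at all, so one must use that all six terminal positions lie in $P_1$ (equivalently in $A$) to conclude that $M_4$ then lands in $A^{*}$. Apart from this, both routes are routine bookkeeping over a small finite set, so I do not expect a serious obstacle.
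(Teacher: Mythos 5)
Your proposal is correct, but the two routes you offer are of unequal value, and only the second is really a proof at the level of detail you give. Your second route is in substance the same kind of argument as the paper's: a direct $\mathcal{P}$/$\mathcal{N}$ verification on the move-closed region $\{x,y\le 7\}$, checking that no move from $A^{*}\cup B^{*}$ stays inside and that every position outside has a move inside. The difference is organizational and in your favor: the paper verifies the seven $z=1$ positions one at a time (``Similarly, we prove that $(5,7,1)$, $(7,5,1)$ \dots''), whereas you derive both directions uniformly from Lemma~\ref{lemmaforsetsabc}, using that $B$ is exactly the misère $\mathcal{P}$-position set on the region (so no board move connects two points of $B^{*}$, and every non-terminal $(x,y)\notin A\cup B$ has a board move into $B$), plus $A\cap B=\emptyset$ to handle $M_4$. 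You also correctly isolate the one delicate point the paper glosses over, namely that $(x,y,1)$ with $x+y\le 2$ has only the $M_4$ move and that all six such board positions lie in $A$. Your first route, via Theorem~\ref{theoremofsumg} and the identity $P_3=\{(x,y,0):\Gamma(x,y)=0\}\cup\{(x,y,1):\Gamma(x,y)=1\}$, is a genuinely different reduction and is sound in principle; but its key step --- that the Grundy-$1$ entries of the variant on the $8\times 8$ board are exactly $B$ --- is precisely the nontrivial content of the lemma restated, and you only assert that the table ``turns out'' this way. As a finite computation it is checkable (and it incidentally anticipates the paper's final theorem by identifying Grundy-$1$ positions with misère $\mathcal{P}$-positions on this region), but as written it is an appeal to an unexhibited calculation rather than an argument, so the lemma should rest on your second route.
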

\begin{proof}
By Lemma \ref{lemmaforsetsabc}, Definitions \ref{variantsum} and \ref{defnofabc2}, we obtain that 
$(x,y,0) \in P_3 \cap \{(x,y,z):x,y \leq 7\}$ if and only if $(x,y,0) \in A^{*}\cup B^{*}.$

Since $(x,y)$ is a $\mathcal{P}$-position of the game in Definition \ref{wythoffvar} when $x+y \leq 2$, 
we obtain that 
$(3,0),(2,1),(1,2),(0,3)$ are $\mathcal{N}$-position of the game in Definition \ref{wythoffvar}.
Therefore, $\{(3,0,0),(2,1,0),(1,2,0),(0,3,0)\} \cup \{(x,y,1):x,y \leq 2\}$ are a set of $\mathcal{N}$-positions of the game in Definition \ref{variantsum}. Hence, 
 $(3,0,1),(2,1,1),(1,2,1),(0,3,1)$ are $\mathcal{P}$-positions of the game in Definition \ref{variantsum}. From $(4,4,1)$, 
 we cannot move to any of $(3,0,1),(2,1,1),(1,2,1),(0,3,1)$ by $M_i$ for $i=1,2,3$. $M_4(4,4,1)=(4,4,0)$ and $(4,4) \notin A$.
Hence, $(4,4.1)$ is a $\mathcal{P}$-position of the game in Definition \ref{variantsum}. Similarly, we prove that $(5,7,1),(7,5,1)$ are $\mathcal{P}$-positions of the game in Definition \ref{variantsum}.
\end{proof}

\begin{lemma}\label{p4ispposition}
For Set $P_4$ in Definition \ref{defofp4}, we have the following:\\
$(i)$ if $(x,y,z) \in P_4$, $M_i(x,y,z) \cap P_4 = \emptyset$ for $i=1,2,3,4$;\\
$(ii)$  if $(x,y,z) \notin P_4$, $M_i(x,y,z) \cap P_4 \ne \emptyset$ for some $i$.
\end{lemma}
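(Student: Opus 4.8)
The plan is to verify directly the two properties in the statement; once $(i)$ and $(ii)$ hold, the standard theory of impartial games yields $P_3=P_4$ for the game of Definition~\ref{variantsum}. Write $\mathbf{G}$ for the variant of Definition~\ref{wythoffvar} and $\mathbf{G}'$ for its misère version of Definition~\ref{miserevariantd1}: by Theorem~\ref{theoremforwythoffandvar}, $P_1$ is the set of $\mathcal{P}$-positions of $\mathbf{G}$, and by definition $P_2$ is the set of $\mathcal{P}$-positions of $\mathbf{G}'$. Recall that from $(x,y,z)$ the legal moves are $M_i(x,y,z)=\{(u,v,z):(u,v)\in M_i(x,y)\}$ for $i=1,2,3$, together with $M_4(x,y,1)=(x,y,0)$ when $z=1$. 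I will use throughout that all six squares with $x+y\le 2$ lie in $P_1$ (they are listed in the set $A$ of Lemma~\ref{lemmaforsetsabc}$(i)$), so that $(x,y)\notin P_1$ forces $x+y\ge 3$, i.e. $(x,y)$ is a genuine, non-terminal position of both $\mathbf{G}$ and $\mathbf{G}'$.

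The one substantive preliminary is the claim $P_1\cap P_2=\emptyset$. When $x,y\le 7$ this is immediate from Lemma~\ref{lemmaforsetsabc}, since there $P_1$ equals $A$, $P_2$ equals $B$, and $A\cap B=\emptyset$. When $x\ge 8$ or $y\ge 8$, Theorem~\ref{twogamesth} identifies $P_2$ with $P_0$ on that region, so it is enough to see that $P_1$ and $P_0$ are disjoint there; since $(0,0)$ and $(1,1)$ lie in $V_7$, only the pairs $(b_1(n),b_2(n))$ and $(b_2(n),b_1(n))$ are relevant. Using $a_2(n)=a_1(n)+n$ and $g(n)\in\{0,1\}$ one gets $b_2(n)-b_1(n)=n+1$, whereas the two coordinates of a point of $P_0$ differ by $n$ up to sign; combined with $b_1(n)\le a_1(n)<a_1(n+1)$ and $b_2(n)\le a_2(n)+1<a_2(n+1)$ (Lemma~\ref{lemmaphi12}), matching a hypothetical common point against either half of $P_0$ is impossible in every case, so $P_1\cap P_2=\emptyset$.

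For $(i)$, let $(x,y,z)\in P_4$. If $z=0$, then $(x,y)\in P_1$; the move $M_4$ is unavailable, and for $i=1,2,3$ each $(u,v)\in M_i(x,y)$ satisfies $(u,v)\notin P_1$ because $\mathbf{G}$ has no move joining two $\mathcal{P}$-positions (Lemma~\ref{frompnotp}), hence $(u,v,0)\notin P_4$. If $z=1$, then $(x,y)\in P_2$; for $i=1,2,3$ no $(u,v)\in M_i(x,y)$ lies in $P_2$, since $\mathbf{G}'$ has no move between $\mathcal{P}$-positions, so $(u,v,1)\notin P_4$; and $M_4(x,y,1)=(x,y,0)\notin P_4$ because $(x,y)\in P_2$ forces $(x,y)\notin P_1$ by the preliminary claim. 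Thus no move keeps a point of $P_4$ inside $P_4$.

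For $(ii)$, let $(x,y,z)\notin P_4$. If $z=0$, then $(x,y)\notin P_1$, hence $x+y\ge 3$, and Lemma~\ref{fromntoplemma} gives a move $(x,y)\to(u,v)\in P_1$ of $\mathbf{G}$, so $(x,y,0)\to(u,v,0)\in P_4$. If $z=1$, then $(x,y)\notin P_2$; if moreover $(x,y)\in P_1$ the single move $M_4(x,y,1)=(x,y,0)$ already lands in $P_4$, while if $(x,y)\notin P_1$ then $x+y\ge 3$, so $(x,y)$ is an $\mathcal{N}$-position of $\mathbf{G}'$ and there is a move $(x,y)\to(u,v)\in P_2$ of $\mathbf{G}'$, whence $(x,y,1)\to(u,v,1)\in P_4$. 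This proves $(ii)$, and therefore $P_3=P_4$. The main obstacle is the preliminary claim $P_1\cap P_2=\emptyset$ for $x\ge 8$ or $y\ge 8$, which rests on Theorem~\ref{twogamesth} together with the floor-function computation showing that the $g$-shifted Wythoff pairs never coincide with an ordinary Wythoff $\mathcal{P}$-position; everything else is bookkeeping with the P/N dichotomy of $\mathbf{G}$ and $\mathbf{G}'$, the key point being that since all terminal squares $x+y\le 2$ of $\mathbf{G}$ already lie in $P_1$, the lone $M_4$-move settles the positions $(x,y,1)$ with $x+y\le 2$ that are otherwise awkward in $(ii)$.
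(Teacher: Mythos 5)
Your proof is correct and follows the same basic strategy as the paper's: reduce everything to the facts that $P_1$ and $P_2$ are the $\mathcal{P}$-position sets of the variant and its mis\`ere version, and use Theorem \ref{twogamesth} to control the $M_4$-move. The two differences are worth noting. First, the paper disposes of the region $x,y\le 7$ wholesale by the explicit finite computation of Lemmas \ref{lemmaforp4} and \ref{thesumlemma}, and only runs the $\mathcal{P}/\mathcal{N}$ argument on the region $x\ge 8$ or $y\ge 8$; you instead treat the whole board uniformly, which requires your preliminary claim $P_1\cap P_2=\emptyset$ and the observation that the terminal squares all lie in $P_1$ (so that the awkward positions $(x,y,1)$ with $x+y\le 2$ are settled by the lone $M_4$-move). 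Second, and to your credit, you actually justify the disjointness $P_0\cap P_1=\emptyset$ on the large region via $b_2(n)-b_1(n)=n+1$ versus $a_2(m)-a_1(m)=m$; the paper's proof asserts ``$(x,y)\in P_0$, and hence $(x,y)\notin P_1$'' without argument, so your version closes a small gap. Both arguments tacitly use the convention that positions in the terminal set admit no further moves (otherwise, e.g., $(2,0)\to(1,0)$ would be a move between two elements of $P_1$ and part $(i)$ would fail); this convention is already needed for Lemma \ref{frompnotp} itself, so it is a feature of the paper rather than a defect of your proof.
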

\begin{proof}
By Lemmas \ref{lemmaforp4} and \ref{thesumlemma}, 
\begin{equation}
P_3 \cap \{(x,y,z):x,y \leq 7\}=P_4 \cap \{(x,y,z):x,y \leq 7\}.\nonumber
\end{equation}
Since $P_3$ is the set of $\mathcal{P}$-positions of the game in Definition \ref{variantsum},
we obtain $\mathrm{(i)}$ and $\mathrm{(ii)}$ for $(x,y,z) \in  P_4 \cap \{(x,y,z):x,y \leq 7\}$.

We assume that $x \geq 8$ or $y \geq 8$. Suppose that $(x,y,0) \in P_4$. Then, $(x,y) \in P_1$.
$P_1$ is the set of $\mathcal{P}$-positions of the game in Definition \ref{wythoffvar}, and hence 
 $M_i(x,y) \cap P_1 = \emptyset$ for any $i=1,2,3$. Therefore, we obtain $M_i(x,y,0) \cap P_4 = \emptyset$ for any $i=1,2,3$.

Suppose that $(x,y,0) \notin P_4$. Then, $(x,y) \notin P_1$, and hence  $M_i(x,y,) \cap P_1 \ne \emptyset$ for some $i$.
Therefore, we obtain $M_i(x,y,0) \cap P_4 \ne \emptyset$ for some $i$.

Suppose that  $(x,y,1) \in P_4$. Then, $(x,y) \in P_2$. Since $P_2$ is the set of $\mathcal{P}$-positions of the game in Definition \ref{miserevariantd1}, we have  $M_i(x,y) \cap P_2 = \emptyset$ for $i=1,2,3$. Therefore, we obtain $M_i(x,y,1) \cap P_4 = \emptyset$ for $i=1,2,3$. Since $x \geq 8$ or $y \geq 8$, by Theorem \ref{twogamesth}, $(x,y) \in P_0$.
Since $P_0 \cap P_1 = \emptyset$, $(x,y) \notin P_1$. Therefore, $M_4(x,y,1)=(x,y,0) \notin P_4$.

We assume that $(x,y,1) \notin P_4$. Then, $(x,y) \notin P_2$.
Since $P_2$ is the set of $\mathcal{P}$-positions of the game in Definition \ref{miserevariantd1}, 
$M_i(x,y) \in P_2$ for some $i$ with $1 \leq i \leq 3$. Then, 
$M_i(x,y,1) \in P_4$ for some $i$ with $1 \leq i \leq 3$.
\end{proof}

\begin{definition}\label{newgameforproof}
By Lemma \ref{p4ispposition}, we define a game that has $P_4$ as the set of $\mathcal{P}$-positions and $\{(x,y,0):x+y \leq 2\}$ as a the set of terminal positions.
\end{definition}

\begin{lemma}\label{sumlemma}
The set of $\mathcal{P}$-positions of the game in Definition \ref{newgameforproof} is the same as the set of $\mathcal{P}$-positions of the game in Definition \ref{variantsum}.
\end{lemma}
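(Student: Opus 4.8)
The plan is to show that the games in Definitions \ref{newgameforproof} and \ref{variantsum} are one and the same impartial ruleset, so that agreement of their $\mathcal{P}$-position sets is forced.

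First I would unpack the sum game of Definition \ref{variantsum}. Its positions are triples $(x,y,z)$ with $x,y\in\mathbb{Z}_{\ge 0}$ and $z\in\{0,1\}$; being the disjunctive sum of the variant of Definition \ref{wythoffvar} with a one-stone heap, a legal move from $(x,y,z)$ is either a board move --- an element of $M_1(x,y)\cup M_2(x,y)\cup M_3(x,y)$ with $z$ left unchanged, available exactly when $x+y\ge 3$, since the positions with $x+y\le 2$ are terminal in the variant --- or the heap move $M_4(x,y,1)=(x,y,0)$, available exactly when $z=1$. Hence a position admits no move precisely when it lies in $\{(x,y,0):x+y\le 2\}$, which is therefore the terminal set of the sum game. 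This is exactly the position set, the four move families $M_1,M_2,M_3,M_4$, and the terminal set that Definition \ref{newgameforproof} attaches to its game, so the two rulesets coincide.

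Next I would invoke uniqueness of the $\mathcal{P}$-position set. The sum game is finite: along any play either $x+y$ strictly decreases (a board move) or $z$ drops from $1$ to $0$ (the heap move), so $2(x+y)+z$ strictly decreases and no infinite play is possible. Hence the sum game has a unique set $P_3$ of $\mathcal{P}$-positions, characterised by the two properties that (a) no move from a position of $P_3$ lands in $P_3$ and (b) every position outside $P_3$ has a move into $P_3$ --- the backward-induction fact underlying Theorem \ref{theoremofsumg}$(i)$. By Lemma \ref{p4ispposition}, $P_4$ has exactly properties (a) and (b) for the move structure $M_1,\dots,M_4$, so by uniqueness $P_4=P_3$. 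Since Definition \ref{newgameforproof} introduces its game precisely as the one with this move structure and this terminal set whose $\mathcal{P}$-position set is $P_4$, the two games share the same $\mathcal{P}$-positions, as claimed.

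I do not expect a genuine obstacle here: the real content has already been carried by Lemmas \ref{thesumlemma}, \ref{lemmaforp4} and \ref{p4ispposition}. The only points needing care are the bookkeeping ones --- verifying that the terminal set of the sum game is exactly $\{(x,y,0):x+y\le 2\}$ (a board position such as $(2,0)$ superficially "looks" movable but is terminal in the variant, so its only successor in the sum game is reached by the heap move), and noting that Definition \ref{newgameforproof} is well posed, which is exactly what Lemma \ref{p4ispposition} guarantees. If one prefers to avoid the abstract uniqueness principle, the identity $P_3=P_4$ can instead be obtained by induction on $2(x+y)+z$, proving simultaneously that $(x,y,z)\in P_4$ if and only if $(x,y,z)$ is a $\mathcal{P}$-position of the sum game, the small cases being supplied by Lemmas \ref{thesumlemma} and \ref{lemmaforp4} and the inductive step by Lemma \ref{p4ispposition}.
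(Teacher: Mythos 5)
Your proof is correct, and it reaches the conclusion by a cleaner route than the paper does. You identify the two rulesets as literally the same game (same triples, same moves $M_1,\dots,M_4$, same terminal set $\{(x,y,0):x+y\le 2\}$), check finiteness via the decreasing quantity $2(x+y)+z$, and then invoke the standard uniqueness of the $\mathcal{P}$-position set: Lemma \ref{p4ispposition} says precisely that $P_4$ satisfies the two defining properties, so $P_4=P_3$. The paper instead argues by contradiction with an explicit descent: assuming a position where the two classifications disagree, it produces (using that the games share the same moves and agree on the $z=0$ slice) a chain of disagreeing positions, all with $z=1$ and with strictly decreasing coordinates, which must eventually land in the region $\{(x,y):x,y\le 7\}$ where Lemma \ref{thesumlemma} has verified agreement by hand --- a contradiction. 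The two arguments are logically close (the descent is just an unrolled form of the induction that proves uniqueness), but yours is shorter, makes the role of Lemma \ref{p4ispposition} explicit, and shows that Lemma \ref{thesumlemma} is not actually needed for this step (it serves only as the base case of the paper's descent). Your side remarks --- that Definition \ref{newgameforproof} is well posed exactly because of Lemma \ref{p4ispposition}, and that the terminal set of the sum game is $\{(x,y,0):x+y\le 2\}$ rather than $\{(x,y,z):x+y\le 2\}$ --- are both correct and worth making.
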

\begin{proof}
Suppose that there is a position $(x,y,1)$ such that $(x,y,1)$ is a $\mathcal{P}$-position of the game in  
Definition \ref{newgameforproof} and a $\mathcal{N}$-position of the game in  
Definition \ref{variantsum}. Both game have the same $\textit{move}$, and hence
we can move to a position $(u,v,w)$ that is a $\mathcal{N}$-position of the game in  
Definition \ref{newgameforproof} and a $\mathcal{P}$-position of the game in  
Definition \ref{variantsum}. Then, $w=1$, because $(u,w,0)$ is a $\mathcal{P}$-position of the game in  
Definition \ref{newgameforproof} and the game in Definition \ref{variantsum} or
$(u,w,0)$ is a $\mathcal{N}$-position of the game in  
Definition \ref{newgameforproof} and the game in Definition \ref{variantsum}.
By continuing this process, we will enter the area $\{(x,y,z):x,y \leq 7 \}$, but this contradicts 
Lemmas \ref{lemmaforp4} and \ref{thesumlemma}.
\end{proof}

\begin{theorem}\label{twogamesth2}
For a position $(x,y)$ with $x \geq 8$ or $y \geq 8$, the Grundy number of the position $(x,y)$ is $1$ in the game in Definition \ref{wythoffvar} if and only if $(x,y)$ is a $\mathcal{P}$-position of Wythoff's game.
\end{theorem}
\begin{proof}
Suppose that $x \geq 8$ or $y \geq 8$.
By Theorem \ref{twogamesth}, 
\begin{equation}
(x,y) \in P_0 \text{ if and only if } (x,y) \in P_2,\label{condition111}
\end{equation}
and by Definition \ref{defofp4},
\begin{equation}
(x,y) \in P_2 \text{ if and only if } (x,y,1) \in P_4.\label{condition222}
\end{equation}
By Lemma \ref{sumlemma},
\begin{equation}
(x,y,1) \in P_4 \text{ if and only if } (x,y,1) \in P_3.\label{condition333}
\end{equation}
By Definition \ref{variantsum},
$(x,y,1) \in P_3$ if and only if 
the Grundy number of $(x,y)$ of the game in Definition \ref{wythoffvar} is $1$, and hence
by Relations $(\ref{condition111})$, $(\ref{condition222})$, and $(\ref{condition333})$, we finish the proof.
\end{proof}


\begin{thebibliography}{111}
%\bibitem{gabriel} G. Nivasch: More on the Sprague--Grundy function for Wythoff's game, \textit{Games of No Chance 3 MSRI Publications}, \textbf{56} (2009).
\bibitem{lesson} M.H. Albert, R.J. Nowakowski, and D. Wolfe: \textit{Lessons In Play}, A K Peters/CRC Press, 2007, 139.
\bibitem{curiousse} D. Gault and M. Clint, “Curiouser and curiouser” said Alice. Further reflections on an
interesting recursive function, Internat. J. Computer Math. 26 (1988), 35–43.
\bibitem{strange} V. Granville and J.-P. Rasson, A strange recursive relation, J. Number Theory 30
(1988), 238–241.
\bibitem{suetsugu2020} K. Hirokawa, R. Miyadera, Y. Sakamoto, K. Suetsugu, Variant of Wythoff's Game-Corner Two Rooks Journal of Information Processing 28, pp. 970 - 975, 2020. 
\bibitem{escherbach} D. Hofstadter, \textit{Gödel, Escher, Bach: an Eternal Golden Braid}, Penguin Books, 1980, ISBN 0-14-005579-7.
\bibitem{integer2025} R. Miyadera, H. Manabe, and M. Fukui, Wythoff's Game with a Pass, {\it Integers}  {\bf 25} (2025), to apper.
\bibitem{combysiegel} A. N. Siegel, {\it Combinatorial Game Theory},  Number 146 in Graduate Studies in Mathematics, American Mathematical Society, Providence, RI, 2013.
\bibitem{wythoffpaper} W.A. Wythoff: A modification of the game of Nim, \textit{Nieuw Arch. Wiskd}, \textbf{7} (1907), 199-202.
\bibitem{integersequence} A005206, The On-Line Encyclopedia of Integer Sequences.

\end{thebibliography}
\end{document}